\documentclass{elsarticle}

\usepackage{amsmath,amssymb,amsthm} 
\usepackage{hyperref}  
\usepackage{algpseudocode,algorithm}

\usepackage{todonotes} 
\usepackage{lineno} 

\newcommand{\norm}{\operatorname{Norm}}
\newcommand{\disc}{\operatorname{Disc}}
\newcommand{\diff}{\operatorname{Diff}}
\newcommand{\tr}{\operatorname{Trace}}
\newcommand{\gal}{\operatorname{Gal}}

\newcommand{\ord}{\operatorname{ord}}

\newcommand{\BB}[1]{\mathbb{#1}} 
\newcommand{\script}[1]{\mathcal{#1}} 
\newcommand{\PP}{\BB{P}}
\newcommand{\CC}{\BB{C}}
\newcommand{\RR}{\BB{R}}
\newcommand{\QQ}{\BB{Q}}
\newcommand{\ZZ}{\BB{Z}}
\newcommand{\FF}{\BB{F}}
\newcommand{\sO}{\script{O}}

\newcommand{\st}{\colon}

\theoremstyle{plain}
\newtheorem{theorem}{Theorem}[section]
\newtheorem{proposition}[theorem]{Proposition}
\newtheorem{lemma}[theorem]{Lemma}
\newtheorem{corollary}[theorem]{Corollary}

\theoremstyle{remark}
\newtheorem{remark}[theorem]{Remark}

\theoremstyle{definition}
\newtheorem{definition}[theorem]{Definition}
\newtheorem{example}[theorem]{Example}

\journal{arxiv}

\begin{document}

\begin{frontmatter}
	\title{Super-Isolated Abelian Varieties}
	\author{Travis Scholl}
	\address{
		Department of Mathematics
		\\ University of California, Irvine
		\\ {\tt schollt@uci.edu}
	}
\begin{keyword}
	Algebraic Number Theory \sep Cryptography \sep Abelian Varieties \sep Elliptic Curve Cryptography
	\MSC[2010] 11R04 
	\sep
	14K02 
	\sep
	11Y40 
\end{keyword}

\end{frontmatter}

\begin{abstract}
	We call an abelian variety over a finite field $\FF_q$ \emph{super-isolated} if its ($\FF_q$-rational) isogeny class contains a single isomorphism class. In this paper, we use the Honda-Tate theorem to characterize super-isolated ordinary simple abelian varieties by certain algebraic integers. Our main result is that for a fixed dimension $g \geq 3$, there are finitely many such varieties.
\end{abstract}

\section{Introduction}

The goal of this paper is to characterize abelian varieties $A$ defined over a finite field $\FF_q$ such that the $\FF_q$-isogeny class of $A$ contains a single $\FF_q$-isomorphism class. We call such varieties \emph{super-isolated} (see Definition~\ref{def:super-isolated-variety} below). They are a natural extension of isolated elliptic curves and abelian surfaces discussed in elliptic and hyperelliptic curve cryptography (ECC) \cite{koblitz2011elliptic,wenhan2012isolated,scholl2017isolated}. Our main result is that for $g \geq 3$, there are only finitely many super-isolated ordinary simple abelian varieties of dimension $g$ (see Corollary~\ref{cor:finite-ord-av-super-isol}). The proof is an application of Honda-Tate theory, and relies on counting certain Weil numbers. Also, it seems unlikely that any exist of cryptographic size (e.g. over a field with $\approx 2^{256}$ elements). This aspect of the results is particularly surprising because super-isolated varieties of cryptographic size are known to exist in the practical range (dimension $\leq 2$), as shown in the author's previous work on super-isolated surfaces \cite{scholl2018super}.

The original motivation for studying isolated curves comes from ECC, see \cite[Sec.~11]{koblitz2011elliptic}. The security of ECC depends on the difficulty of the elliptic curve discrete log problem (ECDLP). The ECDLP can be transferred between elliptic curves via isogenies. If an attacker can quickly compute an isogeny $E \to E'$ and solve the ECDLP on $E'$, then they can also quickly solve the ECDLP on $E$. The more isogenies $E$ admits, the more options an attacker has. This idea was used to construct an efficient attack on the ECDLP on a significant proportion of curves over certain extension fields \cite{menezes2006cryptographic}. Super-isolated curves do not admit isogenies, so an attacker cannot transfer the ECDLP away from a super-isolated curve. A super-isolated curve may not be super-isolated after passing to a larger base field, but if the attacker requires a certain base field (as in \cite{menezes2006cryptographic}) then extending the base field is not advantageous. Even though abelian varieties of dimension $\geq 3$ are rarely used in cryptography, it is still mathematically interesting to ask whether higher dimensional super-isolated varieties exist.

This paper, which is a self-contained version of a chapter from the author's PhD thesis \cite{scholl2018abelian}, is organized as follows. In Section~\ref{sec:background} we review some standard results in algebraic number theory that will be used in the following sections. In Section~\ref{sec:weil-gens} we introduce certain algebraic integers called \emph{Weil generators}, which will later be used to characterize super-isolated varieties. In Section~\ref{sec:algorithm}, we outline an algorithm to enumerate Weil generators in a given number field. Our main result on Weil generators is Theorem~\ref{thm:W-size} in Section~\ref{sec:counting-weil-gens}. This result can be made effective for $g = 3$, and we give detailed examples computing an explicit bound on the number of Weil generators in a given field in Section~\ref{sec:effectiveness}. In Section~\ref{sec:super-isolated-variaties}, we apply the results on Weil generators to study super-isolated varieties.

\subsection*{Acknowledgments}

I would like to thank my advisor Neal Koblitz for all of his great support.

\section{Background}\label{sec:background}

The goal of this section is to recall some standard facts from algebraic number theory and to set notation.

For an extension of number fields $K/F$, we denote the relative discriminant and relative different ideals by $\disc_{K/F}$ and $\diff_{K/F}$ respectively. If the field $F$ is not given, then it is assumed to be $\QQ$. We denote the class number of $K$ by $h_K$.

For any $\alpha \in K$, let $\disc_{K/F}(\alpha)$ and $\diff_{K/F}(\alpha)$ denote the discriminant and different of $\alpha$ respectively. If $f \in F[x]$ is the characteristic polynomial of $\alpha$ over $F$, then by definition $\disc_{K/F}(\alpha)$ is the discriminant of $f$ and $\diff_{K/F}(\alpha)$ is $f'(\alpha)$.
Note that if $K \neq F(\alpha)$, then $\disc_{K/F}(\alpha) = \diff_{K/F}(\alpha) = 0$.

\begin{example}
	Suppose $K$ is a complex multiplication (CM) field and $F$ is the maximal totally real subfield. For any $\alpha \in K$, the characteristic polynomial of $\alpha$ over $F$ is $f(x) = x^2 - (\alpha + \overline{\alpha})x + \alpha\overline{\alpha}$. So $\disc_{K/F}(\alpha) = (\alpha - \overline{\alpha})^2$ and $\diff_{K/F}(\alpha) = \alpha - \overline{\alpha}$.
\end{example}

\begin{lemma}\label{lem:disc-is-norm-diff}
	Let $K/F$ be an arbitrary extension of number fields and let $\alpha \in K$ such that $K = F(\alpha)$. Then
	\[
	\disc_{K/F}(\alpha) = (-1)^{\binom{[K:F]}{2}}\norm_{K/F}\left(\diff_{K/F}(\alpha)\right).
	\]
\end{lemma}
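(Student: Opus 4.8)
$$\disc_{K/F}(\alpha) = (-1)^{\binom{[K:F]}{2}}\norm_{K/F}\left(\diff_{K/F}(\alpha)\right)$$

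where $K = F(\alpha)$, and:
- $\disc_{K/F}(\alpha)$ = discriminant of the characteristic polynomial $f$ of $\alpha$ over $F$
- $\diff_{K/F}(\alpha) = f'(\alpha)$

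**Setting up the proof:**

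Since $K = F(\alpha)$, the characteristic polynomial $f$ equals the minimal polynomial of $\alpha$ over $F$, and $\deg f = [K:F] =: n$.

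Let $\alpha_1, \ldots, \alpha_n$ be the roots of $f$ (the conjugates of $\alpha$). Then:

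$$f(x) = \prod_{i=1}^n (x - \alpha_i)$$

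**Discriminant in terms of roots:**

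The discriminant is:
$$\disc(f) = \prod_{i < j} (\alpha_i - \alpha_j)^2 = (-1)^{\binom{n}{2}} \prod_{i \neq j} (\alpha_i - \alpha_j)$$

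This uses $\prod_{i<j}(\alpha_i-\alpha_j)^2 = (-1)^{\binom{n}{2}}\prod_{i\neq j}(\alpha_i-\alpha_j)$ since there are $\binom{n}{2}$ pairs, and switching order introduces signs.

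**Computing $f'(\alpha_k)$:**

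Differentiating $f(x) = \prod_i(x-\alpha_i)$:
$$f'(x) = \sum_k \prod_{i \neq k}(x - \alpha_i)$$

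Evaluating at $x = \alpha_k$: only the $k$-th term survives:
$$f'(\alpha_k) = \prod_{i \neq k}(\alpha_k - \alpha_i)$$

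**Norm of the different:**

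Now $\diff_{K/F}(\alpha) = f'(\alpha)$. The norm $\norm_{K/F}$ of an element $\beta = g(\alpha)$ equals the product over all embeddings, i.e., $\prod_k g(\alpha_k)$. So:

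$$\norm_{K/F}(f'(\alpha)) = \prod_{k=1}^n f'(\alpha_k) = \prod_{k=1}^n \prod_{i \neq k}(\alpha_k - \alpha_i) = \prod_{k \neq i}(\alpha_k - \alpha_i)$$

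**Combining:**

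$$\norm_{K/F}(\diff_{K/F}(\alpha)) = \prod_{i \neq j}(\alpha_i - \alpha_j)$$

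And from above:
$$\disc_{K/F}(\alpha) = (-1)^{\binom{n}{2}}\prod_{i \neq j}(\alpha_i - \alpha_j)$$

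Therefore:
$$\disc_{K/F}(\alpha) = (-1)^{\binom{n}{2}}\norm_{K/F}(\diff_{K/F}(\alpha))$$ ∎

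---The plan is to work with the conjugates (roots) of $\alpha$ and reduce both sides to a single symmetric product over pairs of roots. Set $n = [K:F]$. Since $K = F(\alpha)$, the characteristic polynomial $f \in F[x]$ of $\alpha$ over $F$ coincides with the minimal polynomial of $\alpha$, so $f$ has degree $n$; write $f(x) = \prod_{i=1}^n (x - \alpha_i)$ for its roots $\alpha_1, \dots, \alpha_n$ in a fixed algebraic closure $\overline{F}$. Because we are in characteristic zero, $K/F$ is separable and the $\alpha_i$ are distinct.

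First I would handle the different side. Differentiating the factored form of $f$ gives $f'(x) = \sum_{k} \prod_{i \neq k}(x - \alpha_i)$, and evaluating at a root $\alpha_k$ kills every summand except the $k$-th, so $f'(\alpha_k) = \prod_{i \neq k}(\alpha_k - \alpha_i)$. The $n$ distinct $F$-embeddings $\sigma_1, \dots, \sigma_n \colon K \hookrightarrow \overline{F}$ send $\alpha$ to the $n$ roots $\alpha_k$, and the norm of an element is the product of its images under these embeddings; applying this to $f'(\alpha) = \diff_{K/F}(\alpha)$ yields
\[
\norm_{K/F}\left(\diff_{K/F}(\alpha)\right) = \prod_{k=1}^n f'(\alpha_k) = \prod_{i \neq j}(\alpha_i - \alpha_j).
\]

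Next I would relate this to the discriminant. By definition $\disc_{K/F}(\alpha) = \disc(f) = \prod_{i<j}(\alpha_i - \alpha_j)^2$. Splitting the product over unordered pairs into ordered pairs, each factor $(\alpha_i - \alpha_j)^2$ with $i<j$ becomes $-(\alpha_i - \alpha_j)(\alpha_j - \alpha_i)$, and there are exactly $\binom{n}{2}$ such pairs, so
\[
\prod_{i \neq j}(\alpha_i - \alpha_j) = (-1)^{\binom{n}{2}} \prod_{i<j}(\alpha_i - \alpha_j)^2 = (-1)^{\binom{n}{2}} \disc_{K/F}(\alpha).
\]
Combining the two displays and multiplying through by $(-1)^{\binom{n}{2}}$ gives the claimed identity.

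The only genuinely delicate point is the norm-as-product-of-conjugates step: one must know that there are exactly $n$ distinct $F$-embeddings of $K$ into $\overline{F}$ permuting the roots of $f$, and that $\norm_{K/F}(\beta) = \prod_k \sigma_k(\beta)$ for $\beta \in K$. Both facts hold because number fields have characteristic zero, hence $K/F$ is separable, so I would simply invoke the standard characterization of the field norm; no inseparability complications arise, and the argument is otherwise a direct bookkeeping of signs.
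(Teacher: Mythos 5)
Your proof is correct and is essentially the paper's approach: the paper simply cites the classical case $F = \QQ$ (Marcus, Thm.~8) and notes the relative proof is identical, and your argument is exactly that standard computation — $f'(\alpha_k) = \prod_{i \neq k}(\alpha_k - \alpha_i)$, norm as product over the $n$ distinct $F$-embeddings (valid since number fields are separable), and the $(-1)^{\binom{n}{2}}$ sign bookkeeping between $\prod_{i \neq j}$ and $\prod_{i<j}(\cdot)^2$ — carried out in the relative setting. You have merely written out in full what the paper delegates to the reference.
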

\begin{proof}
	The proof is the same as in the special case with $F = \QQ$, which appears in \cite[Thm.~8]{marcus2018number}.
	%
	%
\end{proof}

\begin{lemma}\label{lem:relative-monogenic-iff-relative-different-prinicpal}
	Let $K/F$ be an arbitrary extension of number fields. Then
	\[
		\sO_K = \sO_F[\alpha]
		\quad\Leftrightarrow\quad
		(\diff_{K/F}(\alpha)) = \diff_{K/F}.
	\]
\end{lemma}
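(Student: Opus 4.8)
The plan is to prove both implications by passing to trace duals. Recall that the relative different $\diff_{K/F}$ is defined as the inverse of the codifferent $\mfrak{C}_{K/F} = \sO_K^{\vee}$, where for any full $\sO_F$-lattice $M \subseteq K$ I write $M^{\vee} = \{x \in K \st \tr_{K/F}(xM) \subseteq \sO_F\}$ for its dual with respect to the trace pairing. Two formal properties of this duality will do all the work: it reverses inclusions (if $M \subseteq N$ then $N^{\vee} \subseteq M^{\vee}$), and it is compatible with scaling, namely $(cM)^{\vee} = c^{-1}M^{\vee}$ for $c \in K^{\times}$.

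The key computation is the relative version of Euler's lemma: if $f \in \sO_F[x]$ is the monic minimal polynomial of $\alpha$ and $R = \sO_F[\alpha]$, then
\[
R^{\vee} = \tfrac{1}{f'(\alpha)} R = \tfrac{1}{\diff_{K/F}(\alpha)}R.
\]
First I would establish this by working with the $\sO_F$-basis $1, \alpha, \dots, \alpha^{n-1}$ of the free module $R$ (here $n = [K:F]$) and computing its trace-dual basis explicitly. Writing $f(x)/(x-\alpha) = \sum_{i=0}^{n-1} b_i x^i$ with $b_i \in R$, the standard identity $\tr_{K/F}\!\left(\alpha^j b_i / f'(\alpha)\right) = \delta_{ij}$ shows that $\{b_i/f'(\alpha)\}$ is the dual basis; since the $b_i$ themselves form an $\sO_F$-basis of $R$, this yields the displayed formula. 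This argument is purely about the free module $R$ and its basis, so it goes through verbatim over the Dedekind base $\sO_F$ and does not require $\sO_F$ to be a PID.

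With this in hand, both directions are short. For the forward implication, if $\sO_K = R$ then $\mfrak{C}_{K/F} = \sO_K^{\vee} = R^{\vee} = \tfrac{1}{f'(\alpha)}\sO_K$, so $\diff_{K/F} = \mfrak{C}_{K/F}^{-1} = (f'(\alpha)) = (\diff_{K/F}(\alpha))$. For the converse, assume $(\diff_{K/F}(\alpha)) = \diff_{K/F}$. Then $\sO_K^{\vee} = \mfrak{C}_{K/F} = \diff_{K/F}^{-1} = \tfrac{1}{f'(\alpha)}\sO_K$. Applying the inclusion-reversing property to $R \subseteq \sO_K$ gives $\sO_K^{\vee} \subseteq R^{\vee}$, that is $\tfrac{1}{f'(\alpha)}\sO_K \subseteq \tfrac{1}{f'(\alpha)}R$; multiplying through by the nonzero element $f'(\alpha)$ yields $\sO_K \subseteq R$, and the reverse inclusion is automatic. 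Note that the converse avoids any appeal to reflexivity ($M^{\vee\vee} = M$) by dualizing the inclusion directly.

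The main obstacle is the relative Euler's lemma, i.e. verifying that the dual of the order $\sO_F[\alpha]$ is exactly $\tfrac{1}{f'(\alpha)}\sO_F[\alpha]$ rather than something strictly larger; the rest is formal manipulation of fractional ideals. One should also be slightly careful that the hypothesis $K = F(\alpha)$ is used to guarantee $\diff_{K/F}(\alpha) = f'(\alpha) \neq 0$ and that $R$ is a full $\sO_F$-lattice, so that the trace form is nondegenerate and the duals above are genuine fractional ideals.
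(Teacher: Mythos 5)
Your proof is correct, and for the substantive (reverse) direction it takes a genuinely different route from the paper's. The paper cites Neukirch for the forward implication and proves the converse by descending to absolute discriminants: from $(\diff_{K/F}(\alpha)) = \diff_{K/F}$ it deduces $(\disc_{K/F}(\alpha)) = \disc_{K/F}$ using Lemma~\ref{lem:disc-is-norm-diff} and the identity $\norm_{K/F}(\diff_{K/F}) = \disc_{K/F}$, then shows that the $\ZZ$-module spanned by $\{\alpha^i\beta_j\}$, for $\{\beta_j\}$ a $\ZZ$-basis of $\sO_F$, has discriminant $\norm_{F/\QQ}(\disc_{K/F})\disc_F^{[K:F]} = \disc_K$ by the tower formula, so this full sublattice of $\sO_K$ with equal discriminant must be all of $\sO_K$. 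You instead stay entirely at the relative level: you prove the relative Euler lemma $\sO_F[\alpha]^{\vee} = \diff_{K/F}(\alpha)^{-1}\sO_F[\alpha]$ --- legitimate as you note, since $\sO_F[\alpha]$ is free over $\sO_F$ on $1,\alpha,\dots,\alpha^{n-1}$, so no PID hypothesis is needed --- and obtain the converse by dualizing the inclusion $\sO_F[\alpha] \subseteq \sO_K$, a notably clean step that even sidesteps reflexivity $M^{\vee\vee} = M$. What your route buys: it is uniform (both implications fall out of the one dual-basis computation; the paper's forward direction is essentially that same calculation, outsourced to Neukirch), it avoids the tower formula and all absolute discriminant bookkeeping, and it works verbatim for any extension of Dedekind domains with separable fraction fields, whereas the paper's descent to $\disc_K$ is specific to number fields; the paper's version is shorter on the page only because the norm and tower identities are quoted rather than reproved. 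Your side remark on the degenerate case is also consistent with the paper's conventions: if $K \neq F(\alpha)$ then $\diff_{K/F}(\alpha) = 0$, so both sides of the equivalence fail and either hypothesis forces $K = F(\alpha)$, exactly as you use.
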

\begin{proof}
	The forward direction is proved in \cite[Prop.~2.4, Pg.~197]{neukrich1999algebraic}, so it remains to prove the reverse direction. Suppose that $(\diff_{K/F}(\alpha)) = \diff_{K/F}$. Then by Lemma~\ref{lem:disc-is-norm-diff} and  \cite[Thm.~2.9, Pg.~201]{neukrich1999algebraic},
	\[
		\left(\disc_{K/F}(\alpha)\right)
		= \left(\norm_{K/F}\left(\diff_{K/F}(\alpha)\right)\right)
		= \norm_{K/F} \left(\diff_{K/F}\right)
		= \disc_{K/F}.
	\]
	Recall that if $\{\beta_j\}$ is a basis for $\sO_F$, then $\{\alpha^i\beta_j\}$ spans a $\ZZ$-submodule of $\sO_K$ and has discriminant
	\[
		\norm_{F/\QQ}\left(\disc_{K/F}(\alpha)\right)\disc_{F}^{[K:F]}
		=
		\norm_{F/\QQ}\left(\disc_{K/F}\right)\disc_F^{[K:F]},
	\]
	see \cite[Ch.~2, Exercise~23]{marcus2018number}.
	The right hand side is $\disc_K$ by \cite[Cor.~2.10, Pg.~202]{neukrich1999algebraic}. In particular, $\{\beta_j\alpha^i\}$ is a basis for $\sO_K$, so $\sO_K = \sO_F[\alpha]$.
\end{proof}

\begin{definition}\label{defn:height}
	Let $\alpha \in \overline{\QQ}$. The \emph{height} of $\alpha$ is
	\[
		h(\alpha) = \max_{\sigma:\QQ(\alpha) \to \CC} \left|\sigma(\alpha)\right|.
	\]
\end{definition}

\section{Weil Generators}\label{sec:weil-gens}

The goal of this section is to define certain algebraic integers we call \emph{Weil generators}, and to give some of their properties.

\begin{definition}\label{defn:weil-number}
	Let $K$ be a CM field. We say $\alpha \in \sO_K$ is a \emph{Weil number} if $\alpha\overline{\alpha} \in \ZZ$. If $\alpha\overline{\alpha} = n$, then we say $\alpha$ is a Weil $n$-number.
\end{definition}

\begin{remark}\label{rem:height-norm-equiv}
	If $\alpha$ is a Weil number in $K$, then $|\sigma(\alpha)|$ is constant for all $\sigma:K \to \CC$. This means that $|\norm_{K/\QQ}(\alpha)| = h(\alpha)^{[K:\QQ]}$.
\end{remark}

\begin{definition}\label{defn:weil-generator}
	Let $K$ be a CM field. We say $\alpha \in \sO_K$ is a \emph{Weil generator for $K$} if $\alpha$ is a Weil number and $\ZZ[\alpha,\overline{\alpha}] = \sO_K$.
\end{definition}

\begin{example}\label{ex:operations-on-weil-gens}
	If $\alpha$ is a Weil number in $K$, then so is $\zeta\alpha$ for any root of unity $\zeta$ in $K$. However, this does not hold for Weil generators. For example, if $K = \QQ(i)$, then $i$ is a Weil generator for $K$, but $i^2 = -1$ is not. However, if $\alpha$ is a Weil generator, then so are all of the conjugates of $\alpha$, as well as $-\alpha$.
\end{example}

\begin{example}\label{ex:siw-in-g=1}
	If $K$ is any quadratic imaginary field and $\sO_K = \ZZ[\gamma]$, then $\alpha$ is a Weil generator for $K$ if and only if $\alpha = a \pm \gamma$ for some $a \in \ZZ$.
\end{example}

\begin{remark}\label{rem:weil-gen-implies-principal-different}
	If $\alpha$ is a Weil generator for $K$ and $F$ is the maximal totally real subfield of $K$, then $\sO_K = \sO_F[\alpha]$. By Lemma~\ref{lem:relative-monogenic-iff-relative-different-prinicpal}, this implies that $(\diff_{K/F}(\alpha)) = \diff_{K/F}$.
\end{remark}

\begin{example}\label{ex:no-W-because-no-gamma}
	Let $K = \QQ(\sqrt{10},\sqrt{-13})$. Then $\diff_{K/F} = (26,13 + \sqrt{-13})$ is not a principal ideal. Therefore $K$ does not contain a Weil generator by Remark~\ref{rem:weil-gen-implies-principal-different}.
\end{example}

\begin{example}\label{ex:no-relative-generator}
	Let $K = \QQ(\sqrt{15},\sqrt{-2})$. We claim that there is no $\alpha$ such that $\sO_K = \sO_F[\alpha]$, which implies that $K$ does not contain a Weil generator. Suppose for contradiction that such an $\alpha$ exists. One can compute that $\diff_{K/F} = (2)$. Therefore $\alpha - \overline{\alpha} = 2u$ for some $u \in \sO_K^\times$. But one can show that $\sO_K^\times = \sO_F^\times$, so any such $u$ actually lies in $F$. This is a contradiction because conjugation negates $\alpha - \overline{\alpha}$ but fixes $2u$.
\end{example}

\begin{example}
	Let $\zeta_n$ be a primitive $n$th root of unity with $n \geq 3$, and let $K = \QQ(\zeta_n)$. Then $\zeta_n$ is a Weil generator for $K$ because $\sO_K = \ZZ[\zeta_n]$ and $\zeta_n\overline{\zeta}_n = 1$.
\end{example}

\begin{lemma}\label{lem:siw-implies-F-monogenic}
	Let $K$ be a CM field and $F$ be the maximal totally real subfield of $K$. If $\alpha$ is a Weil generator for $K$, then $\sO_F = \ZZ[\alpha + \overline{\alpha}]$.
\end{lemma}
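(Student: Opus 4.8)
The plan is to write $\beta = \alpha + \overline{\alpha}$ and $n = \alpha\overline{\alpha} \in \ZZ$, and to exploit the fact that $\alpha$ satisfies the monic quadratic $x^2 - \beta x + n$ over $\ZZ[\beta]$. First I would observe that $\beta$ is fixed by complex conjugation and is an algebraic integer, so $\beta \in \sO_F$; hence the inclusion $\ZZ[\beta] \subseteq \sO_F$ is immediate and the whole problem reduces to proving the reverse containment $\sO_F \subseteq \ZZ[\beta]$.

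The key structural step is to rewrite the ring $\sO_K$. Since $\alpha$ is a Weil generator we have $\sO_K = \ZZ[\alpha,\overline{\alpha}]$ by definition, and because $\overline{\alpha} = \beta - \alpha$ with $\beta \in \ZZ[\alpha,\overline{\alpha}]$, this equals $\ZZ[\beta][\alpha]$. Now I would use that $\alpha$ is a root of $x^2 - \beta x + n$ with coefficients in $\ZZ[\beta]$ to conclude that $\alpha^2$, and hence every power of $\alpha$, lies in $\ZZ[\beta] + \ZZ[\beta]\alpha$. Therefore
\[
	\sO_K = \ZZ[\beta] + \ZZ[\beta]\,\alpha,
\]
a $\ZZ[\beta]$-module spanned by $1$ and $\alpha$. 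Since $[K:F] = 2$ and $\alpha \notin F$ (so that $1,\alpha$ are $F$-linearly independent), this decomposition is in fact a direct sum.

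With this in hand the conclusion follows from a short conjugation argument. Given any $\gamma \in \sO_F$, I would write $\gamma = r_0 + r_1\alpha$ with $r_0,r_1 \in \ZZ[\beta]$ using the decomposition above. Applying complex conjugation, which fixes $r_0,r_1 \in F$ and $\gamma \in F$ but sends $\alpha$ to $\overline{\alpha}$, gives $\gamma = r_0 + r_1\overline{\alpha}$. Subtracting the two expressions yields $r_1(\alpha - \overline{\alpha}) = 0$, and since $K$ is a CM field we have $\alpha \neq \overline{\alpha}$, forcing $r_1 = 0$ and hence $\gamma = r_0 \in \ZZ[\beta]$. This establishes $\sO_F \subseteq \ZZ[\beta]$ and completes the proof. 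The only point requiring care is the module decomposition of $\sO_K$ over $\ZZ[\beta]$; once it is available, the conjugation step eliminating the coefficient of $\alpha$ is the crux, and it is genuinely easy because $\alpha - \overline{\alpha}$ is nonzero in a CM field.
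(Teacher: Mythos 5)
Your proof is correct, but it takes a genuinely different route from the paper's. The paper argues via symmetric functions: every element of $\sO_K = \ZZ[\alpha,\overline{\alpha}]$ is a polynomial in $\alpha,\overline{\alpha}$, the elements of $\sO_F$ correspond to the symmetric ones, and the classical fact that the symmetric polynomials in $\ZZ[x,y]$ form $\ZZ[x+y,xy]$ gives $\sO_F = \ZZ[\alpha+\overline{\alpha},\alpha\overline{\alpha}] = \ZZ[\alpha+\overline{\alpha}]$. You instead make the module structure explicit: using the monic quadratic $x^2 - \beta x + n$ over $\ZZ[\beta]$ you collapse $\sO_K = \ZZ[\beta][\alpha]$ to $\ZZ[\beta] + \ZZ[\beta]\alpha$, and then kill the $\alpha$-coefficient of any $\gamma \in \sO_F$ by a Galois-descent argument for the degree-$2$ extension $K/F$. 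Your version is arguably tighter: the paper's identification of $\sO_F$ with the symmetric polynomial expressions glosses over the fact that the representation of an element as a polynomial in $\alpha,\overline{\alpha}$ is not unique (the map $\ZZ[x,y]\to\sO_K$ has a kernel), so conjugation-invariance of the \emph{value} does not make the \emph{polynomial} symmetric, and the naive symmetrization $\bigl(p(x,y)+p(y,x)\bigr)/2$ introduces denominators; your reduction to the basis $\{1,\alpha\}$ followed by conjugation sidesteps this entirely. One small repair: your justification that $\alpha \neq \overline{\alpha}$ ``since $K$ is a CM field'' is not right as stated, since a CM field contains plenty of conjugation-fixed elements (all of $F$); the correct reason, which also supplies the $\alpha \notin F$ you use for directness of the sum, is that $\alpha = \overline{\alpha}$ would force $\sO_K = \ZZ[\alpha,\overline{\alpha}] = \ZZ[\alpha] \subseteq \sO_F$, impossible because $\sO_K$ has $\ZZ$-rank $2g$ while $\sO_F$ has rank $g$ (equivalently, $K = \QQ(\alpha,\overline{\alpha})$ would lie in the totally real field $F$). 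With that one-line fix the argument is complete.
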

\begin{proof}
	By hypothesis, every element of $\sO_K$ can be written as polynomial in $\alpha,\overline{\alpha}$ with coefficients in $\ZZ$. $\sO_F$ is precisely the polynomials which are symmetric in $\alpha$ and $\overline{\alpha}$. Recall that the subring of symmetric polynomials in $\ZZ[x,y]$ is $\ZZ[x+y,xy]$. Therefore, $\sO_F = \ZZ[\alpha+\overline{\alpha},\alpha\overline{\alpha}]$. But $\alpha\overline{\alpha} \in \ZZ$, so this is the same as $\ZZ[\alpha+\overline{\alpha}]$.
\end{proof}

\begin{remark}
	If $\alpha$ is a Weil number in $K$, then the property $\sO_F = \ZZ[\alpha + \overline{\alpha}]$ does not imply $\alpha$ is a Weil generator. For example, if $K$ is a quadratic imaginary field, then every $\alpha \in \sO_K$ satisfies $\alpha\overline{\alpha} \in \ZZ$. Moreover, in this case $F = \QQ$ so $\sO_F = \ZZ = \ZZ[\alpha+\overline{\alpha}]$. However, not every $\alpha \in \sO_K$ will satisfy $\sO_K = \ZZ[\alpha,\overline{\alpha}]$, e.g. $K = \QQ(i)$ and $\alpha = 2i$.
\end{remark}

\begin{example}\label{ex:2-splits-no-weil-gens}
	Let $K = \QQ[x]/(x^6 + 12x^4 + 17x^2 + 2)$. Then $K$ is a CM field of degree $6$.
	Moreover, the prime 2 splits completely in $F$, hence there are $3$ maps $\sO_F \to \FF_2$. This shows that $F$ is not monogenic as there are only 2 maps $\ZZ[x] \to \FF_2$. Therefore $K$ does not contain any Weil generators.
\end{example}


\begin{lemma}\label{lem:equiv-defn-for-siw}
	Let $K$ be a CM field and $F$ the maximal totally real subfield of $K$. Then $\alpha \in \sO_K$ is a Weil generator for $K$ if and only if the following hold
	\begin{enumerate}
		\item\label{defn:siw2-nrm} $\alpha\overline{\alpha} \in \ZZ$
		\item\label{defn:siw2-monogenic-over-F} $\ZZ[\alpha+\overline{\alpha}] = \sO_F$
		\item\label{defn:siw2-diff} $(\diff_{K/F}(\alpha)) = \diff_{K/F}$
	\end{enumerate}
\end{lemma}
\begin{proof}
	Suppose that $\alpha$ is a Weil generator for $K$. Then $\alpha$ satisfies property~\ref{defn:siw2-nrm} by definition, and property~\ref{defn:siw2-monogenic-over-F} follows from Lemma~\ref{lem:siw-implies-F-monogenic}. Note that $\sO_K = \ZZ[\alpha,\overline{\alpha}]$ implies that $\sO_K = \sO_F[\alpha]$. So by Lemma~\ref{lem:relative-monogenic-iff-relative-different-prinicpal}, $\alpha$ satisfies property~\ref{defn:siw2-diff}.

	Now suppose that $\alpha \in K$ satisfies properties~\ref{defn:siw2-nrm}-\ref{defn:siw2-diff}. By Lemma~\ref{lem:relative-monogenic-iff-relative-different-prinicpal} and property~\ref{defn:siw2-diff}, $\sO_K = \sO_F[\alpha]$ and $\sO_F = \ZZ[\alpha+\overline{\alpha}]$. Hence $\sO_K = \ZZ[\alpha,\alpha+\overline{\alpha}] = \ZZ[\alpha,\overline{\alpha}]$, so $\alpha$ is a Weil generator.
\end{proof}

\begin{remark}
	The properties in Lemma~\ref{lem:equiv-defn-for-siw} are independent as shown by the following examples:
	\begin{itemize}
		\item If $K = \QQ(i)$ and $\alpha = 2i$, then  \ref{defn:siw2-nrm} and \ref{defn:siw2-monogenic-over-F} hold, but not \ref{defn:siw2-diff}.
		\item If $K = \QQ(\zeta_5)$ and $\alpha = \zeta_5 + 1$, then \ref{defn:siw2-monogenic-over-F} and \ref{defn:siw2-diff} hold, but not \ref{defn:siw2-nrm}.
		\item If $K = \QQ(\zeta_5)$ and $\alpha = -5\zeta_5^3 - 4\zeta_5^2 + 2\zeta_5 - 2$,
		then \ref{defn:siw2-nrm} and \ref{defn:siw2-diff} hold, but not \ref{defn:siw2-monogenic-over-F}.
	\end{itemize}
\end{remark}

Next we will show that we can always write Weil generators in a certain form. To do this, we first introduce some notation. Let $K$ be a fixed CM field of degree $2g$. Let $F$ be the maximal totally real subfield of $K$. Fix $\gamma \in \sO_K$ such that $\sO_K = \sO_F[\gamma]$.\footnote{It is possible that such a $\gamma$ does not exist, as in Example~\ref{ex:no-relative-generator}. However, by Lemma~\ref{lem:relative-monogenic-iff-relative-different-prinicpal} and Lemma~\ref{lem:equiv-defn-for-siw}, if no such $\gamma$ exists, then $K$ does not contain a Weil generator.} Let $T$ denote a set of representatives of the set of $\eta \in \sO_F$ such that $\ZZ[\eta] = \sO_F$ modulo integer translation. That is, for every $\eta'$ such that $\ZZ[\eta'] = \sO_F$, there exists a unique $\eta \in T$ such that $\eta' - \eta \in \ZZ$.

\begin{lemma}\label{lem:existence-of-formula-for-siw}
	If $\alpha \in K$ is a Weil generator, then
	\begin{equation}\label{eq:formula-for-alpha-siw}
	\alpha = \frac{u(\gamma - \overline{\gamma}) + \eta + a}{2}
	\end{equation}
	for a unique $u \in \sO_F^\times$, $\eta \in T$, and $a \in \ZZ$.
\end{lemma}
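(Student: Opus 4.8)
The plan is to split $\alpha$ into its totally real and its ``purely imaginary'' parts and to recognize each part separately. Writing
\[
2\alpha = (\alpha + \overline{\alpha}) + (\alpha - \overline{\alpha}),
\]
I observe that $\alpha + \overline{\alpha} \in \sO_F$ is fixed by conjugation while $\alpha - \overline{\alpha}$ is negated by it. The strategy is to use property~\ref{defn:siw2-monogenic-over-F} of Lemma~\ref{lem:equiv-defn-for-siw} to pin down $\alpha + \overline{\alpha}$ up to the data $(\eta, a)$, and to use property~\ref{defn:siw2-diff} to pin down $\alpha - \overline{\alpha}$ up to the unit $u$; first I must note that $\alpha \notin F$ (otherwise $\ZZ[\alpha,\overline{\alpha}] \subseteq \sO_F \subsetneq \sO_K$), so $K = F(\alpha)$ and the different $\diff_{K/F}(\alpha)$ is genuinely computed by the degree-$2$ characteristic polynomial of $\alpha$ over $F$, giving $\diff_{K/F}(\alpha) = \alpha - \overline{\alpha}$ as in the introductory CM example. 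The same remark applies to $\gamma$, so $\diff_{K/F}(\gamma) = \gamma - \overline{\gamma}$.

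For the real part, property~\ref{defn:siw2-monogenic-over-F} gives $\ZZ[\alpha + \overline{\alpha}] = \sO_F$, so $\alpha + \overline{\alpha}$ is one of the elements generating $\sO_F$ as a ring. By the definition of $T$ as representatives modulo integer translation, there is a unique $\eta \in T$ and a unique $a \in \ZZ$ with $\alpha + \overline{\alpha} = \eta + a$.

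The imaginary part is the crux and is where property~\ref{defn:siw2-diff} enters. Since $\sO_K = \sO_F[\gamma]$, Lemma~\ref{lem:relative-monogenic-iff-relative-different-prinicpal} yields $(\gamma - \overline{\gamma}) = (\diff_{K/F}(\gamma)) = \diff_{K/F}$, and property~\ref{defn:siw2-diff} gives $(\alpha - \overline{\alpha}) = (\diff_{K/F}(\alpha)) = \diff_{K/F}$. Hence $(\alpha - \overline{\alpha})$ and $(\gamma - \overline{\gamma})$ are the same ideal of $\sO_K$, so they differ by a unit: $\alpha - \overline{\alpha} = u(\gamma - \overline{\gamma})$ for some $u \in \sO_K^\times$. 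The point I expect to be the main obstacle — though it resolves cleanly — is showing $u$ is actually totally real. Applying complex conjugation to $u = (\alpha - \overline{\alpha})/(\gamma - \overline{\gamma})$ negates both numerator and denominator, so $\overline{u} = u$, forcing $u \in F$; being a unit of $\sO_K$ lying in $F$, we get $u \in \sO_F^\times$. Combining the two parts and dividing by $2$ produces the claimed formula.

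Finally, for uniqueness I suppose $\alpha$ has two such expressions. Taking $\alpha + \overline{\alpha}$ of each (recalling $u,\eta,a$ are fixed by conjugation, so the formula gives $\alpha + \overline{\alpha} = \eta + a$) and invoking the uniqueness built into $T$ forces the two $\eta$'s and $a$'s to agree. Taking $\alpha - \overline{\alpha}$ gives $(u - u')(\gamma - \overline{\gamma}) = 0$, and since $\gamma \notin F$ makes $\gamma - \overline{\gamma} \neq 0$, we conclude $u = u'$.
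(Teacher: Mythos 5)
Your proof is correct and follows essentially the same route as the paper's: both extract $\eta$ and $a$ from $\alpha + \overline{\alpha}$ via the definition of $T$, obtain $u = (\alpha - \overline{\alpha})/(\gamma - \overline{\gamma})$ as a unit from the equality of ideals $(\alpha - \overline{\alpha})\sO_K = \diff_{K/F} = (\gamma - \overline{\gamma})\sO_K$ (using Lemmas~\ref{lem:relative-monogenic-iff-relative-different-prinicpal} and \ref{lem:equiv-defn-for-siw}), and prove uniqueness by reconstructing the triple from $\alpha$. You actually spell out two points the paper leaves implicit --- that $\alpha, \gamma \notin F$ so the differents are honestly $\alpha - \overline{\alpha}$ and $\gamma - \overline{\gamma}$, and that $u$ lands in $\sO_F^\times$ rather than merely $\sO_K^\times$ because conjugation negates both numerator and denominator --- which is a welcome addition.
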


\begin{proof}
	First we will show that given $\alpha$, there exists some choice of $u$, $\eta$, and $a$ satisfying equation~(\ref{eq:formula-for-alpha-siw}). Then we will show that such a triple is unique.

	Given $\alpha \in W$, set $u = (\alpha - \overline{\alpha})/(\gamma - \overline{\gamma})$. Note that $u$ is a unit in $\sO_F^\times$ because $(\gamma - \overline{\gamma})\sO_K = \diff_{K/F} = (\alpha - \overline{\alpha})\sO_K$ by Lemma~\ref{lem:relative-monogenic-iff-relative-different-prinicpal} and Lemma~\ref{lem:equiv-defn-for-siw}. Recall that $\ZZ[\alpha + \overline{\alpha}] = \sO_F$, so by the definition of $T$ there exists a unique $\eta \in T$ and $a \in \ZZ$ such that $\alpha + \overline{\alpha} = \eta + a$. A straightforward calculation shows that $u$, $\eta$, and $a$ satisfy equation~(\ref{eq:formula-for-alpha-siw}).

	The uniqueness of $u$, $\eta$, and $a$ follows from their construction in terms of $\alpha$ (using our fixed choices of $\gamma$ and $T$). For example, if $u'$, $\eta'$, and $a'$ was another triple satisfying equation~(\ref{eq:formula-for-alpha-siw}), then both $u$ and $u'$ must equal $(\alpha - \overline{\alpha})/(\gamma - \overline{\gamma})$. We also have $\alpha + \overline{\alpha} = \eta + a = \eta' + a'$, so $\eta = \eta'$ and $a = a'$ by the definition of $T$.
\end{proof}

Next we recall a theorem of Gy{\"o}ry which implies that the set $T$ is finite (see \cite{gyory2000discriminant} for an English summary). This means that the number of possible $\eta$ (up to integer translation) in equation~(\ref{eq:formula-for-alpha-siw}) is finite.
\begin{theorem}[\!\!{\cite{gyory1976polynomials}}]\label{thm:finite-monogenerators}
	For any number field $L$, the set $T$ of $\eta$ such that $\sO_L = \ZZ[\eta]$, up to integer translation, is finite. Moreover, representatives for $T$ can be effectively determined.
\end{theorem}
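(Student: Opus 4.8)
The plan is to reduce this to an effective finiteness theorem for $S$-unit equations, which is the standard route for discriminant-type problems. The key reformulation is that $\sO_L = \ZZ[\eta]$ holds if and only if $\disc_L(\eta) = \disc_L$ (up to sign), since in general $\disc_L(\eta) = [\sO_L : \ZZ[\eta]]^2\,\disc_L$ whenever $L = \QQ(\eta)$. Thus $T$ (up to translation) is exactly the set of integral generators whose minimal polynomial has the fixed discriminant $\disc_L$, and the problem is manifestly invariant under $\eta \mapsto \eta + m$ for $m \in \ZZ$. The cases $[L:\QQ] \leq 2$ are immediate (for $[L:\QQ]=2$ the single difference $\eta - \overline{\eta}$ is pinned down up to sign by $\disc_L$), so I would assume $n := [L:\QQ] \geq 3$.

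Next I would pass to the Galois closure $M$ of $L/\QQ$ and write $\eta^{(1)},\dots,\eta^{(n)}$ for the conjugates of $\eta$ in $M$. Setting $\beta_{ij} = \eta^{(i)} - \eta^{(j)}$, the discriminant condition becomes $\prod_{i<j}\beta_{ij}^2 = \pm\,\disc_L$. Since each $\beta_{ij}^2$ then divides the fixed nonzero integer $\disc_L$ in $\sO_M$, every $\beta_{ij}$ divides $\disc_L$, so all prime divisors of $(\beta_{ij})$ lie above the (finitely many) rational primes dividing $\disc_L$. Letting $S$ be the set of places of $M$ over those primes together with the archimedean places, each $\beta_{ij}$ is an $S$-unit, and hence so is every ratio $\beta_{ij}/\beta_{k\ell}$.

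The crucial structural input is the linear relation among conjugate differences: for any three indices one has $\beta_{12} + \beta_{23} = \beta_{13}$, and dividing through by $\beta_{13}$ gives
\[
\frac{\beta_{12}}{\beta_{13}} + \frac{\beta_{23}}{\beta_{13}} = 1 ,
\]
an equation $x + y = 1$ in $S$-units $x,y \in \sO_{M,S}^\times$. By the effective theory of $S$-unit equations, this has finitely many, effectively computable solutions. Chaining such equations over all triples of indices bounds every ratio $\beta_{ij}/\beta_{13}$ to a finite, effectively determined set. Combined with the fixed value $\prod_{i<j}\beta_{ij}^2 = \pm\,\disc_L$, this determines each $\beta_{ij}$ up to a root of unity, so the tuple $(\beta_{ij})$ takes only finitely many values. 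Each such tuple reconstructs the conjugates $\eta^{(i)}$ up to a common additive shift, and since $\sum_i \eta^{(i)} = \tr_{L/\QQ}(\eta) \in \ZZ$, it pins down $\eta$ up to integer translation. This yields the finiteness of $T$ together with effectivity.

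The genuinely deep ingredient, and the main obstacle, is the effective resolution of the $S$-unit equation, which rests on Baker-type lower bounds for linear forms in logarithms; everything else is essentially bookkeeping. (Mere finiteness, without the effective clause, already follows from the qualitative theory of $S$-unit equations.) A secondary technical point I would need to handle carefully is the descent through the Galois closure: one must check that the $S$-unit structure and the linear relations are compatible with the $\gal(M/\QQ)$-action, so that the finite set of solutions in $M$ actually descends to finitely many $\eta \in \sO_L$ and that representatives can be extracted explicitly.
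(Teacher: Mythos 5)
The paper does not prove this statement at all: it is imported as a black box from Gy{\"o}ry's 1976 paper, with the citation (and the survey \cite{gyory2000discriminant}) as the only justification. Your sketch is, in substance, a faithful reconstruction of Gy{\"o}ry's original argument: reduce $\sO_L = \ZZ[\eta]$ to the discriminant equation $\disc_{L}(\eta) = \disc_L$ (note there is actually no sign ambiguity, since $\disc_L(\eta) = [\sO_L : \ZZ[\eta]]^2\,\disc_L$ exactly), pass to the Galois closure $M$, observe that the conjugate differences $\beta_{ij}$ divide $\disc_L$ in $\sO_M$ and hence are $S$-units for $S$ the places over $\disc_L$, and exploit the identity $\beta_{ij} + \beta_{jk} = \beta_{ik}$ to land in the effective theory of $S$-unit equations, which is precisely where the Baker-type input enters in Gy{\"o}ry's proof. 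The two steps you state breezily do check out: the triples $(1,2,3)$, $(1,2,j)$, and the relation $\beta_{ij} = \beta_{1j} - \beta_{1i}$ confine every ratio $\beta_{ij}/\beta_{13}$ to a finite effective set; the fixed product $\prod_{i<j}\beta_{ij}^2 = \disc_L$ then determines $\beta_{13}$ up to a $2\binom{n}{2}$-th root of unity in $M$, of which there are finitely many. Your recovery of $\eta$ is also sound, and in fact slightly stronger than you need: if two generators share the same difference tuple, their difference has all conjugates equal, hence is a rational integer outright. The Galois-descent worry you flag is harmless for the same reason: from a candidate tuple and a normalized trace one reconstructs $\eta^{(i)} = \bigl(\tr_{L/\QQ}(\eta) + \sum_j \beta_{ij}\bigr)/n$ and simply tests, effectively, whether the result is an algebraic integer generating $\sO_L$. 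So there is no gap; the only point of comparison with the paper is that you have supplied a proof where the paper deliberately supplies none.
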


\begin{example}\label{ex:mono-gens-in-quadratic-field}
	If $\deg L = 2$, then we may choose $T = \left\{ (\disc_{L} \pm \sqrt{\disc_{L}})/2 \right\}$. Hence $T$ has cardinality $2$.
\end{example}

\begin{example}
	If $\deg L = 3$, then finding $\eta$ such that $\sO_L = \ZZ[\eta]$ can be reduced to solving a certain Thue equation \cite{gaal1989computing}.
\end{example}

Equation~(\ref{eq:formula-for-alpha-siw}) suggests that one way to search for Weil generators is to fix $\gamma$ and enumerate over values for $\eta$, $u$, and $a$. The following lemma gives an optimization: when $g \geq 2$, there is at most one possible value of $a$.

\begin{lemma}\label{lem:uniqueness-of-formula-for-siw}
	If $g \geq 2$, then for any $u \in \sO_F^\times$ and $\eta \in T$, there is at most one $a \in \ZZ$ such that the right hand side of equation~(\ref{eq:formula-for-alpha-siw}) is a Weil generator.
\end{lemma}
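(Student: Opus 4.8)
The plan is to observe that, among the three defining conditions for a Weil generator in Lemma~\ref{lem:equiv-defn-for-siw}, only condition~\ref{defn:siw2-nrm} (that $\alpha\overline{\alpha} \in \ZZ$) actually depends on the integer $a$; conditions~\ref{defn:siw2-monogenic-over-F} and~\ref{defn:siw2-diff} are determined by $u$ and $\eta$ alone. Indeed, since conjugation fixes $u,\eta,a \in \sO_F$ and negates $\gamma - \overline{\gamma}$, the right hand side of~(\ref{eq:formula-for-alpha-siw}) satisfies $\alpha + \overline{\alpha} = \eta + a$ and $\alpha - \overline{\alpha} = u(\gamma - \overline{\gamma})$. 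Hence $\ZZ[\alpha+\overline{\alpha}] = \ZZ[\eta]$, which depends only on $\eta$, and $(\diff_{K/F}(\alpha)) = (\alpha - \overline{\alpha}) = (\gamma - \overline{\gamma}) = \diff_{K/F}$, using that $u$ is a unit, so condition~\ref{defn:siw2-diff} holds regardless of $a$. Thus the whole question reduces to controlling which $a$ make $\alpha\overline{\alpha}$ a rational integer.

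Next I would compute $\alpha\overline{\alpha}$ explicitly. Writing $\delta = \gamma - \overline{\gamma}$ (so that $\overline{\delta} = -\delta$ and $\delta^2 \in \sO_F$), the factorization gives
\[
	4\,\alpha\overline{\alpha} = (\eta + a)^2 - u^2\delta^2.
\]
The key idea is then to compare two putative solutions: suppose $a_1 \neq a_2$ both yield Weil generators $\alpha_1,\alpha_2$ from the same $u$ and $\eta$. By condition~\ref{defn:siw2-nrm}, both $\alpha_1\overline{\alpha_1}$ and $\alpha_2\overline{\alpha_2}$ lie in $\ZZ$, so their difference does too. The $u^2\delta^2$ terms cancel, leaving
\[
	\alpha_1\overline{\alpha_1} - \alpha_2\overline{\alpha_2}
	= \frac{(a_1 - a_2)(2\eta + a_1 + a_2)}{4}
	= \frac{a_1 - a_2}{2}\,\eta + \frac{(a_1 - a_2)(a_1 + a_2)}{4}.
\]

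The final step is a linear-independence argument. Because $\ZZ[\eta] = \sO_F$ and $[F:\QQ] = g \geq 2$, the element $\eta$ has degree at least $2$ over $\QQ$, so $1$ and $\eta$ are linearly independent over $\QQ$. The displayed difference lies in $\ZZ \subseteq \QQ$, yet it is a $\QQ$-linear combination of $1$ and $\eta$ whose $\eta$-coefficient is $(a_1 - a_2)/2$; linear independence forces this coefficient to vanish, giving $a_1 = a_2$, a contradiction. I expect the only genuine subtlety to be confirming that $g \geq 2$ enters exactly here, through $\eta \notin \QQ$ — this is precisely what fails when $g = 1$, where $F = \QQ$, $\eta \in \ZZ$, and Example~\ref{ex:siw-in-g=1} exhibits infinitely many admissible values of $a$.
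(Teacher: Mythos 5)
Your proof is correct and takes essentially the same route as the paper: both arguments reduce the lemma to the necessary condition $\alpha\overline{\alpha} \in \QQ$ and then observe that, because $\ZZ[\eta] = \sO_F$ forces $\QQ(\eta) = F$ of degree $g \geq 2$, the elements $1$ and $\eta$ are $\QQ$-linearly independent, so the coefficient of $\eta$ pins down $a$ uniquely --- which is also exactly where $g \geq 2$ enters in the paper, making your opening paragraph about conditions~\ref{defn:siw2-monogenic-over-F} and~\ref{defn:siw2-diff} unnecessary. The only cosmetic difference is that you subtract two putative solutions so that the $u^2\delta^2$ term cancels, whereas the paper expands $\Omega\overline{\Omega}$ in the power basis of $\eta$ and reads off $a = -2a_1$ directly.
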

\begin{proof}
	Let $u \in \sO_F^\times$ and $\eta \in T$. Let $\Omega = (u(\gamma - \overline{\gamma}) + \eta)/2$. It is sufficient to show that there is at most one $a \in \QQ$ such that $\alpha = \Omega + a/2$ satisfies $\alpha\overline{\alpha} \in \QQ$. This is a necessary condition for $\alpha$ to be a Weil generator. A straightforward computation shows that $\alpha\overline{\alpha} \in \QQ$ if and only if $\Omega\overline{\Omega} + a\eta/2 \in \QQ$. Because $\{1,\eta,\dots,\eta^g\}$ is a $\QQ$-basis for $F$, we may write $\Omega\overline{\Omega} = \sum a_i\eta^i$ for unique rational numbers $a_0,\dots,a_g \in \QQ$. Then $\Omega\overline{\Omega} + a\eta/2 \in \QQ$ if and only if $a = -2a_1$ and $a_i = 0$ for all $i > 1$.
\end{proof}

\section{Searching For Weil Generators}\label{sec:algorithm}

The goal of this section is to describe an efficient method for searching for Weil generators in a given CM field $K$. We will use the same notation as in Section~\ref{sec:weil-gens}.

A naive approach to finding Weil generators is to directly search over all elements of $\sO_K$. Using Lemma~\ref{lem:equiv-defn-for-siw}, one can quickly test whether a given $\alpha \in \sO_K$ is a Weil generator. This approach is impractical because Weil generators are sparse, as shown in Theorem~\ref{thm:W-size} below. Instead, we will enumerate units in $F$ and use those to attempt to construct Weil generators.

Recall from Lemma~\ref{lem:existence-of-formula-for-siw} that every Weil generator $\alpha$ can be written as
\[
	\alpha = \frac{u(\gamma - \overline{\gamma}) + \eta + a}{2},
\]
for a unique $u \in \sO_F^\times$, $\eta \in T$, and $a \in \ZZ$. Moreover, by Lemma~\ref{lem:uniqueness-of-formula-for-siw}, $a$ is uniquely determined by $u$ and $\eta$. Therefore, by searching over all $u$ and $\eta$, we will eventually find all Weil generators $\alpha$. This approach is formalized into Algorithm~\ref{alg:find-siw-g>=2} below.

\begin{center}
	\begin{minipage}{0.91\linewidth}
		\begin{algorithm}[H]
			\caption{Search Weil Generators}\label{alg:find-siw-g>=2}
			\begin{algorithmic}[1]
				\Require A CM field $K$ of degree $2g$, with $g \geq 2$, and a bound $B$
				\Ensure A set of Weil generators in $K$.
				\State $F \gets $ the maximal totally real subfield of $K$
				\State $\gamma \gets $ an element of $K$ such that $\sO_K = \sO_F[\gamma]$
				\State $T \gets $ a complete set of $\eta \in F$ such that $\sO_F = \ZZ[\eta]$ up to integer translation
				\State $U \gets $ all units $u \in \sO_F^\times$ with height $h(u) \leq B$
				\ForAll{$u\in U$ and $\eta \in T$}
				\State $\Omega \gets (u(\gamma - \overline{\gamma}) + \eta)/2$
				\State Write $\Omega\overline{\Omega} = \sum_{i=0}^{g-1} a_i\eta^i$ with $a_0,\dots,a_{g-1} \in \QQ$.
				\State $\alpha \gets \Omega - a_1$
				\If{$a_i = 0$ for $i > 1$ and $\alpha \in \sO_K$}
				\State \textbf{print} $\alpha$
				\EndIf
				\EndFor
			\end{algorithmic}
		\end{algorithm}
	\end{minipage}
\end{center}

\begin{theorem}
	Every $\alpha$ output by the Algorithm~\ref{alg:find-siw-g>=2} is a Weil generator. Moreover, for every Weil generator $\alpha \in K$, if Algorithm~\ref{alg:find-siw-g>=2} is given a sufficiently large bound $B$, then it will eventually print $\alpha$.
\end{theorem}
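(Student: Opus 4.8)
The plan is to prove correctness and completeness separately, matching the two assertions in the theorem statement. Correctness means: anything the algorithm prints is genuinely a Weil generator. Completeness means: every Weil generator is eventually printed for large enough $B$. Both parts rest almost entirely on the structural results already established, Lemma~\ref{lem:equiv-defn-for-siw}, Lemma~\ref{lem:existence-of-formula-for-siw}, and Lemma~\ref{lem:uniqueness-of-formula-for-siw}, so the work is in tracing how the algorithm's steps realize those lemmas rather than in new mathematics.

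I should check the structure. For correctness: the algorithm builds $\alpha = \Omega - a_1$ where $\Omega = (u(\gamma-\bar\gamma)+\eta)/2$, and prints only when the $a_i$ vanish for $i>1$ and $\alpha \in \mathcal{O}_K$. I'd want to verify that these conditions force $\alpha$ to satisfy the three properties of Lemma~\ref{lem:equiv-defn-for-siw}. The condition $a_i=0$ for $i>1$ together with setting $a = -2a_1$ is exactly the condition from the proof of Lemma~\ref{lem:uniqueness-of-formula-for-siw} that makes $\alpha\bar\alpha \in \mathbb{Q}$; since $\alpha\in\mathcal{O}_K$ and $\alpha\bar\alpha$ is a totally positive algebraic integer that is rational, it lies in $\mathbb{Z}$, giving property~\ref{defn:siw2-nrm}. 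Note $\alpha+\bar\alpha = \eta + a \in \mathbb{Z}[\eta] = \mathcal{O}_F$ with $\eta \in T$, so property~\ref{defn:siw2-monogenic-over-F} holds. The subtle point is property~\ref{defn:siw2-diff}: I need $\alpha - \bar\alpha = u(\gamma-\bar\gamma)$ with $u$ a unit to give $(\diff_{K/F}(\alpha)) = (\gamma-\bar\gamma)\mathcal{O}_K = \diff_{K/F}$, using that $\mathcal{O}_K = \mathcal{O}_F[\gamma]$ and Remark~\ref{rem:weil-gen-implies-principal-different}. I must confirm the algorithm's $\alpha - \bar\alpha$ really equals $u(\gamma-\bar\gamma)$; since $\Omega - \bar\Omega = u(\gamma-\bar\gamma)$ and subtracting the rational $a_1$ doesn't change the imaginary part, this holds.

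For completeness, I would invoke Lemma~\ref{lem:existence-of-formula-for-siw}: any Weil generator $\alpha$ equals $(u(\gamma-\bar\gamma)+\eta+a)/2$ for some $u \in \mathcal{O}_F^\times$, $\eta \in T$, $a \in \mathbb{Z}$. The set $T$ is finite by Theorem~\ref{thm:finite-monogenerators}, so the loop over $\eta$ exhausts all possibilities. The key observation is that $\alpha$ is a Weil number, hence by Remark~\ref{rem:height-norm-equiv} the conjugates of $u(\gamma-\bar\gamma)$, and therefore of $u$, are bounded in terms of $h(\alpha)$ and $\gamma$; so $h(u)$ is bounded, meaning $u$ appears in $U$ once $B$ is large enough. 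For that fixed $u$ and $\eta$, Lemma~\ref{lem:uniqueness-of-formula-for-siw} guarantees $a$ is the unique integer making the expression a Weil generator, and the proof of that lemma identifies it as $a = -2a_1$, which is precisely what the algorithm computes. Thus the algorithm reconstructs exactly this $\alpha$ and prints it.

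\textbf{The main obstacle} is making the completeness bound on $h(u)$ precise. I need to argue that a Weil generator $\alpha$ forces its associated unit $u = (\alpha-\bar\alpha)/(\gamma-\bar\gamma)$ to have bounded height. Since $h(u) \leq h(\alpha-\bar\alpha)/\min_\sigma|\sigma(\gamma-\bar\gamma)|$ and $h(\alpha - \bar\alpha) \leq 2h(\alpha)$, the height of $u$ is controlled by $h(\alpha)$ and a constant depending only on $\gamma$ (note $\gamma - \bar\gamma$ is a generator of the different, so its conjugates are nonzero). Hence for any fixed target $\alpha$, choosing $B \geq 2h(\alpha)/\min_\sigma|\sigma(\gamma-\bar\gamma)|$ suffices to capture $u \in U$, completing the argument. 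The phrase ``sufficiently large $B$'' in the statement is exactly what lets me avoid a uniform bound here and quantify $B$ per Weil generator.
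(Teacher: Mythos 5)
Your proof is correct and takes essentially the same route as the paper's: correctness by verifying the three conditions of Lemma~\ref{lem:equiv-defn-for-siw} (namely $\alpha\overline{\alpha} \in \QQ \cap \sO_K = \ZZ$, $\alpha + \overline{\alpha} \in \eta + \ZZ$, and $\alpha - \overline{\alpha} = u(\gamma - \overline{\gamma})$ so that $(\diff_{K/F}(\alpha)) = \diff_{K/F}$), and completeness via Lemma~\ref{lem:existence-of-formula-for-siw}, the finiteness of $T$, and Lemma~\ref{lem:uniqueness-of-formula-for-siw} once $B \geq h(u)$. Your explicit estimate $B \geq 2h(\alpha)/\min_{\sigma}\left|\sigma(\gamma - \overline{\gamma})\right|$ is a correct quantitative refinement of the paper's unquantified ``sufficiently large $B$''.
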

\begin{proof}
	Suppose that the algorithm outputs $\alpha$. Then $\alpha = \Omega - a_1$ where $\Omega = (u(\gamma - \overline{\gamma}) + \eta)/2$ and $a_1 \in \QQ$. Because $\Omega\overline{\Omega} - a_1\eta \in \QQ$, it follows that $(\Omega - a_1)(\overline{\Omega} - a_1) = \alpha\overline{\alpha} \in \QQ$. Recall that $\alpha \in \sO_K$ by construction, so $\alpha\overline{\alpha} \in \ZZ$. This also shows that $a_1 \in \QQ \cap \frac{1}{2}\sO_K = \frac{1}{2}\ZZ$. Hence $\alpha + \overline{\alpha} = \eta - 2a_1 \in \eta + \ZZ$, so $\ZZ[\alpha+\overline{\alpha}] = \sO_F$. Also, $\alpha - \overline{\alpha} = \Omega - \overline{\Omega} = u(\gamma - \overline{\gamma})$ so $(\diff_{K/F}(\alpha)) = (\diff_{K/F}(\gamma)) = \diff_{K/F}$. By Lemma~\ref{lem:equiv-defn-for-siw}, this shows that $\alpha$ is a Weil generator.

	Now suppose that $\alpha$ is a Weil generator for $K$. We want to show that for a large enough bound $B$, Algorithm~\ref{alg:find-siw-g>=2} will eventually output $\alpha$. By Lemma~\ref{lem:existence-of-formula-for-siw}, $\alpha = (u(\gamma - \overline{\gamma}) + \eta + a)/2$ for unique $u \in \sO_F^\times$, $\eta \in T$, and $a \in \ZZ$. If $B \geq h(u)$, then Algorithm~\ref{alg:find-siw-g>=2} is guaranteed to find $\alpha$ because it enumerates all possible $\eta$ and $a$ (which corresponds to $-2a_1$ in the notation of Algorithm~\ref{alg:find-siw-g>=2}) such that $(u(\gamma - \overline{\gamma}) + \eta + a)/2$ is a Weil generator.
\end{proof}

\begin{remark}
	Using Lemma~\ref{lem:bound-alpha-by-u-squared}, one can show that if $\deg K = 4$, then Algorithm~\ref{alg:find-siw-g>=2} returns all Weil generators $\alpha$ with $h(\alpha) \leq C_5 B^2$ for a constant $C_5$ which can be explicitly computed.
\end{remark}

%

\section{Counting Weil Generators}\label{sec:counting-weil-gens}

In this section, we state and prove our main result on the number of Weil generators of bounded height in a given CM field $K$ of degree $2g$.

\begin{theorem}\label{thm:W-size}
	Let $W$ be the set of Weil generators in a CM field $K$ of degree $2g$. Then
	\[
		\#\left\{ \alpha \in W \st h(\alpha) \leq N \right\}
		=
		\begin{cases}
			4N + O(1) & g = 1 \\
			\rho\log N + O(1) & g = 2 \text{\ and \ } W \neq \emptyset \\
			O(1) & g \geq 3,
		\end{cases}
	\]
	where $\rho$ is a constant depending on $K$. Moreover, if $g \leq 3$ then the implicit constants are effectively computable.\footnote{For example, if $g = 1$, then there is an computable constant $C$ such that $\left|\#\left\{ \alpha \in W \st h(\alpha) \leq N \right\} - 4N \right| \leq C$. The value of $C$ will depend on the field $K$ and choice of $\gamma$ and $T$.}
\end{theorem}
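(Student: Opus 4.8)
The plan is to exploit the parametrization $\alpha = (u(\gamma-\overline{\gamma}) + \eta + a)/2$ from Lemma~\ref{lem:existence-of-formula-for-siw}, together with the fact (Lemma~\ref{lem:uniqueness-of-formula-for-siw}) that $a$ is determined by $(u,\eta)$ when $g \geq 2$, to convert the count into a count of admissible units $u \in \sO_F^\times$. Write $\beta = \alpha + \overline{\alpha} = \eta + a \in \sO_F$ and $\delta = (\gamma - \overline{\gamma})^2 \in \sO_F$, which is totally negative (each $\sigma(\delta) < 0$ since $\gamma - \overline{\gamma}$ is purely imaginary in every embedding). The identity $(\alpha - \overline{\alpha})^2 = (\alpha+\overline{\alpha})^2 - 4\alpha\overline{\alpha}$ yields the fundamental relation
\[
\beta^2 - \delta u^2 = 4n, \qquad n = \alpha\overline{\alpha} = h(\alpha)^2 \in \ZZ.
\]
Since $n$ is rational, this single relation in $\sO_F$ encodes the $g$ real equations $\sigma_i(\beta)^2 + |\sigma_i(\delta)|\,\sigma_i(u)^2 = 4n$, whence $h(u)^2 \leq 4n/\min_i|\sigma_i(\delta)|$ and a short leading-order computation (consistent with the remark after Algorithm~\ref{alg:find-siw-g>=2}) gives $h(\alpha) \asymp h(u)^2$. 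Thus $h(\alpha) \leq N$ is, up to the determination of $a$, equivalent to $h(u) \leq C\sqrt{N}$ for an explicit $C$, and the three regimes arise from how the admissibility conditions interact with the unit rank $g-1$ of $\sO_F^\times$. For $g = 1$ one argues directly: $F = \QQ$, and by Example~\ref{ex:siw-in-g=1} the Weil generators are exactly $a \pm \gamma$, with $h(a\pm\gamma)^2 = a^2 + a\,\tr(\gamma) + \norm(\gamma)$ a monic quadratic of negative discriminant, so $h(a\pm\gamma) \leq N$ for $a$ in an interval of length $2N + O(1/N)$; the two disjoint families give $4N + O(1)$ with an effective error from the explicit endpoints.

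For $g \geq 2$ I would fix $\eta \in T$ (a finite set by Theorem~\ref{thm:finite-monogenerators}) and study the admissible units for each $\eta$. Expanding $\Omega\overline{\Omega} = (\eta^2 - \delta u^2)/4$ in the basis $\{1,\eta,\dots,\eta^{g-1}\}$ as in the proof of Lemma~\ref{lem:uniqueness-of-formula-for-siw}, the requirement $\alpha\overline{\alpha} \in \QQ$ forces the coordinates of $\delta u^2$ in degrees $2,\dots,g-1$ to take prescribed values; equivalently $\delta u^2 - \eta^2 \in \QQ + \QQ\eta$, so $\delta u^2$ lies in a fixed $2$-dimensional affine subspace $A \subseteq F$. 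When $g = 2$ this is vacuous, so admissibility is governed only by the integrality $\alpha \in \sO_K$, a congruence condition selecting an eventually periodic set of exponents $k$ in $\sO_F^\times = \{\pm\epsilon^k\}$. Writing $\epsilon_0 > 1$ for the larger absolute conjugate of the fundamental unit $\epsilon$, so $h(\epsilon^k) = \epsilon_0^{|k|}$, the bound $h(u) \leq C\sqrt{N}$ becomes $|k| \leq \tfrac{\log N}{2\log\epsilon_0} + O(1)$; summing the admissible $k$ in this range over the finitely many $\eta \in T$ gives $\rho\log N + O(1)$, with $\rho > 0$ since $W \neq \emptyset$. The technical point to secure here is that the integrality condition is genuinely periodic in $k$ with an $O(1)$ rather than merely $o(\log N)$ error.

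The crux is $g \geq 3$, where I must show that the $g-2 \geq 1$ extra linear conditions cutting out $A$ leave only finitely many admissible $u$, so the count is $O(1)$ for every $N$. Applying the $g$ embeddings to $\delta u^2 = \eta^2 + r_1\eta + r_0$ (unknown $r_0,r_1 \in \QQ$) shows that the points $(\sigma_i(\eta),\ \sigma_i(\delta)\sigma_i(u)^2 - \sigma_i(\eta)^2)$ are collinear; eliminating $r_0,r_1$ from the first three embeddings produces
\[
\lambda_1\sigma_1(u)^2 + \lambda_2\sigma_2(u)^2 + \lambda_3\sigma_3(u)^2 = C,
\]
where $C$ is a nonzero Vandermonde determinant and the $\lambda_m$ are nonzero (both because $\eta$ generates $F$, so its conjugates $\sigma_i(\eta)$ are distinct). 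As the $\sigma_m(u)$ are units in the Galois closure of $F$, dividing by $C$ exhibits a three-term $S$-unit equation $y_1 + y_2 + y_3 = 1$; by the finiteness theorem for $S$-unit equations it has finitely many solutions with no vanishing proper subsum, while any vanishing subsum pins ratios $\sigma_i(u)/\sigma_j(u)$ to finitely many values and is eliminated using $\prod_i \sigma_i(u) = \pm 1$ together with a second triple of embeddings. Hence finitely many $u$ are admissible and the count is $O(1)$. The main obstacle is exactly this finiteness step — controlling the degenerate solutions — and, for the effectivity claim when $g = 3$ (where a single such relation appears), replacing the ineffective $S$-unit finiteness by an effective bound via Baker's method on linear forms in logarithms.
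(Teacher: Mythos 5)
Your $g=1$ case and the skeleton of your $g=2$ case coincide with the paper's own proof (Propositions~\ref{prop:W-size-g=1} and \ref{prop:W-size-g=2}): parametrize $\alpha$ by $(u,\eta,a)$ via Lemma~\ref{lem:existence-of-formula-for-siw}, prove $h(\alpha)\asymp h(u)^2$, and count exponents $k$ with $u=\pm u_0^k$ in a congruence class. The step you flag but do not supply --- that integrality of $\alpha(u,\eta)$ is \emph{exactly} periodic in $k$, not merely eventually so --- is the content of the paper's Lemma~\ref{lem:alpha-in-W-if-u-in-congruence}: one checks $\alpha(u,\eta)\in\frac14\sO_K$ always, and that $2a(u,\eta)\bmod 4$ depends only on $u\bmod 4\sO_K$ (expand $\Omega(u+4\beta,\eta)\overline{\Omega(u+4\beta,\eta)}$ in the basis $\{1,\eta\}$), so membership in $W$ is a true congruence on $k$ because $u_0$ has finite order in $(\sO_K/4\sO_K)^\times$. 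That is fillable and not a real gap.

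For $g\geq 3$ your route is genuinely different from the paper's. The paper takes the full norm of $A+B\eta-\eta^2=u^2\delta$, lands on integral points of the degree-$g$ plane curve $C_\eta$ of equation~(\ref{eq:C_eta-curve}), proves geometric irreducibility (Lemma~\ref{lem:lines plus constant irreducible}), and invokes Siegel's theorem. You instead eliminate the two rational unknowns across three embeddings to get $\lambda_1\sigma_1(u)^2+\lambda_2\sigma_2(u)^2+\lambda_3\sigma_3(u)^2=C$ with Vandermonde-nonzero $\lambda_i, C$, and apply $S$-unit finiteness in the Galois closure. This does prove finiteness: each nondegenerate solution determines $\sigma_1(u)^2$, hence $u$ up to sign by injectivity of $\sigma_1$; and the degenerate case is simpler than you suggest --- a vanishing proper subsum forces some $y_k=1$, i.e.\ $\sigma_k(u)^2=C/\lambda_k$, which already pins $u$ to finitely many values, so no ``second triple of embeddings'' is needed (none exists when $g=3$ anyway). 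Your approach trades the paper's algebraic geometry (irreducibility plus Siegel) for the Subspace Theorem; both are ineffective in general, and yours is arguably more self-contained for bare finiteness.

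The genuine gap is your effectivity claim at $g=3$. Baker's method yields effective bounds only for \emph{two-term} $S$-unit equations $\lambda_1 y_1+\lambda_2 y_2=1$; the finiteness of nondegenerate solutions of the three-term equation $y_1+y_2+y_3=1$ rests on the Subspace Theorem and is famously ineffective, and the multiplicative dependence of your unknowns (conjugate squares of a single unit in a rank-two unit group) does not bring the equation under any standard effective theorem you could cite. The paper's detour through the curve is exactly what rescues effectivity: for $g=3$ the cubic $C_\eta$ has genus $0$ or $1$; in the genus-$0$ case a rational parametrization together with Siegel's identity reduces the integral points to two-term $S$-unit equations \cite[Thm.~D.8.4]{hindry2000diophantine} (this is carried out concretely in Lemma~\ref{lem:points-on-C-eta-in-zeta9}), and in the genus-$1$ case the Baker--Coates theorem \cite{baker1970integer} gives the explicit, if astronomical, height bound of equation~(\ref{eq:bound-on-alpha-height-in-genus-1-case}). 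To repair your proof you would need to reintroduce this curve structure, or some equivalent mechanism reducing to two-term unit equations, rather than invoking Baker's method directly on the three-term equation.
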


To prove Theorem~\ref{thm:W-size}, we proceed by cases depending on the degree of $K$. The case $g = 1$ is given by Proposition~\ref{prop:W-size-g=1} in Section~\ref{sec:g=1}. The case $g = 2$ is given by Proposition~\ref{prop:W-size-g=2} in Section~\ref{sec:g=2}. The case $g \geq 3$ is given by Proposition~\ref{prop:W-size-g>=3} in Section~\ref{sec:g>=3}.

Throughout this section, we will keep the notation introduced at the end of Section~\ref{sec:weil-gens}. Unless otherwise noted,
\begin{itemize}
	\item $K$ is a fixed CM field of degree $2g$.
	\item $F$ is the maximal totally real subfield of $K$.
	\item $\gamma$ is a fixed element of $K$ such that $\sO_K = \sO_F[\gamma]$.
	\item $T$ is a set of representatives of generators for $\sO_F$ as monogenic order, up to integer translation.
\end{itemize}

\begin{remark}
	It is possible that such a $\gamma$ does not exist, as in Example~\ref{ex:no-relative-generator}. However, by Lemma~\ref{lem:relative-monogenic-iff-relative-different-prinicpal} and Lemma~\ref{lem:equiv-defn-for-siw}, a necessary condition for $K$ to contain a Weil generator is that such a $\gamma$ exists.
\end{remark}

\subsection{The Case $g = 1$}\label{sec:g=1}

The goal of this section is to prove Theorem~\ref{thm:W-size} in the case $g = 1$.

\begin{proposition}\label{prop:W-size-g=1}
	If $K$ is a quadratic imaginary field, then
	\[
		\#\left\{ \alpha \in W \st h(\alpha) \leq N \right\} = 4N + O(1).
	\]
\end{proposition}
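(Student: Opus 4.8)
The plan is to turn the count into a lattice-point problem on two lines in $\CC$ and then into a count of integers in an interval. First I would invoke Example~\ref{ex:siw-in-g=1}: since $K$ is quadratic imaginary with $\sO_K = \ZZ[\gamma]$, the set of Weil generators is exactly
\[
	W = \{a + \gamma \st a \in \ZZ\} \cup \{a - \gamma \st a \in \ZZ\}.
\]
Next I would identify the height with the complex modulus. Fix an embedding $K \hookrightarrow \CC$; the two embeddings of $K$ are this one and its conjugate, and every $\alpha \in \sO_K$ is automatically a Weil number since $\alpha\overline{\alpha} = \norm_{K/\QQ}(\alpha) \in \ZZ$. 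By Remark~\ref{rem:height-norm-equiv}, $h(\alpha)^2 = |\norm_{K/\QQ}(\alpha)| = \alpha\overline{\alpha} = |\alpha|^2$, so $h(\alpha)$ is just the ordinary absolute value of $\alpha$ in $\CC$.

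With this identification the problem is purely geometric. Writing $\gamma = s + ti$ with $t = \operatorname{Im}(\gamma) \neq 0$ (here $t \neq 0$ because $\gamma \notin \RR$, $K$ being imaginary quadratic), the family $\gamma + \ZZ$ consists of the evenly spaced points $(a+s) + ti$. The condition $h(a+\gamma) \leq N$ reads $(a+s)^2 + t^2 \leq N^2$, so $a$ must lie in an interval of length $2\sqrt{N^2 - t^2}$. Since $\sqrt{N^2 - t^2} = N + O(1/N)$ and an interval of length $L$ contains $L + O(1)$ integers, this family contributes $2N + O(1)$ generators. The family $-\gamma + \ZZ$ is handled identically; alternatively one applies complex conjugation, which by Example~\ref{ex:operations-on-weil-gens} maps $\gamma + \ZZ$ bijectively onto $-\gamma + \ZZ$ and preserves $h$, so it also contributes $2N + O(1)$.

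Finally I would check the two families are disjoint, so that the counts simply add: an equality $a + \gamma = b - \gamma$ would force $2\gamma = b - a \in \ZZ$, contradicting $\gamma \notin \RR$ (equivalently, the imaginary parts $\pm t$ are distinct). Summing gives $4N + O(1)$. The argument is elementary, so the only points that require care are fixing the leading constant — the factor $2$ from the symmetric interval and the factor $2$ from the two disjoint lines must combine to $4$, not $2$ or $8$ — and controlling the error, which reduces to the estimate $\sqrt{N^2 - t^2} = N + O(1/N)$ together with the standard integer-counting error. Making the $O(1)$ explicit, as promised in the footnote, is then a matter of bookkeeping the interval endpoints in terms of $s$ and $t = \operatorname{Im}(\gamma)$.
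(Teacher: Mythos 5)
Your proof is correct and takes essentially the same approach as the paper's: both reduce to the characterization $W = \{a \pm \gamma \st a \in \ZZ\}$ from Example~\ref{ex:siw-in-g=1} and then count integer translates of bounded height, the paper compressing your interval computation into the single estimate $h(a \pm \omega) = |a| + O(1)$. Your explicit disjointness check of the two families and the bookkeeping $\sqrt{N^2 - t^2} = N + O(1/N)$ are details the paper leaves implicit.
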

\begin{proof}
	Let $-d = \disc_{K}$ and let $\omega = (d + \sqrt{-d})/2$. Then $\sO_K = \ZZ[\omega]$. Recall from Example~\ref{ex:siw-in-g=1} that $\alpha \in K$ is a Weil generator if and only if $\alpha = a \pm \omega$ for some $a \in \ZZ$. The claim follows because $h(a \pm \omega) = |a| + O(1)$.
\end{proof}

\subsection{The Case $g = 2$}\label{sec:g=2}

The goal of this section is to prove Theorem~\ref{thm:W-size} in the case $g = 2$.

\begin{proposition}\label{prop:W-size-g=2}
	Let $K$ be a quartic CM field. There is a constant $\rho$ that depends only on $K$ such that if $W \neq \emptyset$, then
	\[
		\#\left\{ \alpha \in W \st h(\alpha) \leq N \right\} = \rho\log N + O(1).
	\]
	Moreover, both $\rho$ and the implied constant in $O(1)$ are effectively computable.
\end{proposition}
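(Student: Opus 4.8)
The plan is to parametrize Weil generators and reduce the count to counting units in $F$. Since $[K:\QQ]=4$, the field $F$ is real quadratic, so by Dirichlet's unit theorem $\sO_F^\times = \{\pm\epsilon^k : k \in \ZZ\}$ for a fundamental unit $\epsilon$; write $\tau_1,\tau_2 : F \to \RR$ for the two real embeddings and $R = |\log|\tau_1(\epsilon)||$ for the regulator. By Lemma~\ref{lem:existence-of-formula-for-siw} and Lemma~\ref{lem:uniqueness-of-formula-for-siw}, every Weil generator $\alpha$ corresponds to a unique triple $(u,\eta,a)$ with $a$ determined by $(u,\eta)$, so $\alpha \mapsto (u,\eta)$ is an injection $W \hookrightarrow \sO_F^\times \times T$, and $T$ is finite by Theorem~\ref{thm:finite-monogenerators}. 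It therefore suffices to count, for each fixed $\eta \in T$ and each sign, the integers $k$ for which $u = \pm\epsilon^k$ extends to a Weil generator of height $h(\alpha) \leq N$.

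Next I would pin down how $h(\alpha)$ grows with $k$. Writing $\beta = \alpha + \overline{\alpha} = \eta + a \in \sO_F$ and $D = (\gamma - \overline{\gamma})^2 \in \sO_F$ (which is totally negative), Remark~\ref{rem:height-norm-equiv} gives $h(\alpha)^2 = \alpha\overline{\alpha} = (\beta^2 - u^2 D)/4$, and this is a rational integer precisely because the determined value of $a$ forces $\tau_1$ and $\tau_2$ to agree on $(\beta^2 - u^2 D)/4$. Solving that agreement for $a$ yields $2a = (\tau_1(u)^2 D_1 - \tau_2(u)^2 D_2)/(e_1 - e_2) - (e_1 + e_2)$, where $e_i = \tau_i(\eta)$ and $D_i = \tau_i(D)$. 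Because $\tau_1(\epsilon)\tau_2(\epsilon) = \pm 1$, for $|k|$ large exactly one of $\tau_1(u)^2, \tau_2(u)^2$ is large, and the dominant embedding makes $|a| \asymp h(u)^2$; consequently $h(\alpha) = \sqrt{\alpha\overline{\alpha}} \asymp |a|/2 \asymp h(u)^2$. Taking logarithms, $\log h(\alpha) = 2|k|R + O(1)$ with a uniformly bounded error, so $h(\alpha) \leq N$ holds exactly for $|k| \leq (\log N)/(2R) + O(1)$.

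The remaining point is to identify which $k$ actually yield Weil generators. By Lemma~\ref{lem:equiv-defn-for-siw}, conditions \ref{defn:siw2-monogenic-over-F} and \ref{defn:siw2-diff} are automatic: the former because $\alpha + \overline{\alpha} = \eta + a$ with $a \in \ZZ$ and $\ZZ[\eta] = \sO_F$, and the latter because $\alpha - \overline{\alpha} = u(\gamma - \overline{\gamma})$ with $u$ a unit, so $(\alpha - \overline{\alpha}) = (\gamma - \overline{\gamma}) = \diff_{K/F}$. Condition \ref{defn:siw2-nrm} then follows as soon as $\alpha \in \sO_K$, since the determined $a$ already forces $\alpha\overline{\alpha} \in \QQ$. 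Thus the only genuine constraints are $a \in \ZZ$ and $\alpha \in \sO_K$, and both are congruence conditions on $u$ modulo $2$: integrality of $a$ amounts to a fixed parity of the $\eta$-coefficient of $u^2 D \in \sO_F = \ZZ[\eta]$, while $\alpha \in \sO_K$ is the condition $u(\gamma - \overline{\gamma}) + \eta + a \equiv 0 \pmod{2\sO_K}$. Since $\epsilon$ has finite multiplicative order modulo $2\sO_F$, both conditions are periodic in $k$ with a common period $m$, so for each fixed $\eta$ and sign the admissible $k$ form a union of $\#P_{\eta,\pm}$ residue classes modulo $m$. Counting these in $|k| \leq (\log N)/(2R) + O(1)$ gives $(\#P_{\eta,\pm}/(mR))\log N + O(1)$, and summing over the finitely many $\eta \in T$ and the two signs yields $\rho\log N + O(1)$ with $\rho = (mR)^{-1}\sum_{\eta,\pm}\#P_{\eta,\pm}$; the hypothesis $W \neq \emptyset$ guarantees some admissible residue class exists, so $\rho > 0$. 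Everything is effective, as $\epsilon$, $R$, the period $m$, the admissible residues, and $T$ are all computable, and the boundary and small-$|k|$ contributions are bounded explicitly.

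The main obstacle I expect is the uniform height estimate $\log h(\alpha) = 2|k|R + O(1)$, and specifically verifying that the forced value of $a$ grows exactly like $h(u)^2$ and that $\alpha\overline{\alpha}$ is dominated by $a^2/4$ uniformly in $k$. This rests on controlling the two real embeddings of $F$ simultaneously through the norm relation $\tau_1(\epsilon)\tau_2(\epsilon) = \pm 1$, which makes one factor large exactly when the other is small and keeps the cross terms under control. By contrast, the periodicity argument is structurally routine once the defining conditions of Lemma~\ref{lem:equiv-defn-for-siw} are reduced modulo $2$; it is really the exponential growth of the rank-one unit group, transmitted to $h(\alpha)$ as an exponential in $|k|$, that converts the linear count of the $g=1$ case into the logarithmic count here.
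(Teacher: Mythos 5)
Your proposal is correct and follows essentially the same route as the paper: parametrize Weil generators by triples $(u,\eta,a)$ with $a$ forced by $(u,\eta)$, prove $h(\alpha) \asymp h(u)^2$, show that membership in $W$ is a congruence condition on the exponent $k$ of the fundamental unit, and count admissible exponents in an interval of length $\asymp \log N$. The only differences are in execution detail: you obtain the height estimate by solving explicitly for $a$ through the two real embeddings (the paper instead uses Lemmas~\ref{lem:bound-hbeta-below-by-c} and \ref{lem:bound-hbeta-above-by-c} applied to $\beta(u,\eta) = (\Omega - \eta/2)(\overline{\Omega} - \eta/2)$, whose fixed norm also disposes of the finitely many degenerate $u$ with $u^2(\gamma-\overline{\gamma})^2 \in \QQ$, a case your dominant-embedding argument excludes automatically for large $|k|$), and your modulus $2$ is legitimate and even slightly sharper than the paper's mod-$4\sO_K$ congruence in Lemma~\ref{lem:alpha-in-W-if-u-in-congruence}, since $u \equiv u' \bmod 2\sO_F$ already forces $u^2 \equiv u'^2 \bmod 4\sO_F$ and hence $a \equiv a' \bmod 2$.
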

The main idea behind the proof of Proposition~\ref{prop:W-size-g=2} is to show that counting Weil numbers reduces to counting solutions to Pell's equation.

We will use the same notation as given in Section~\ref{sec:counting-weil-gens}. Because $F$ is a real quadratic field, we can set $T = \{\pm(\disc_F + \sqrt{\disc_F})/2\}$. However, there is no obvious choice of $\gamma$ because $\sO_K$ is not always a free $\sO_F$-module, see Example~\ref{ex:no-W-because-no-gamma}. In this section, we continue to assume some such $\gamma$ exists.

Some of the implied constants in this section will depend on the choice of $\gamma$ and $T$ as will the implied constant in the proposition. But the constant $\rho$ in Proposition~\ref{prop:W-size-g=2} depends only on the field $K$. A detailed explanation of how to compute these constants is given in Example~\ref{ex:asymptotic-value-number-W-N-zeta5} below.

The outline of the proof of Proposition~\ref{prop:W-size-g=2} is as follows. Let
\begin{align*}
	\Omega(u,\eta) &= \frac{u(\gamma - \overline{\gamma}) + \eta}{2},
	\\
	a(u,\eta) &= \text{The unique element of $(1/2)\ZZ$ such that } \\&\quad \norm_{K/F}\left(\Omega(u,\eta) + \frac{a(u,\eta)}{2}\right) \in \QQ, \text{ see the proof of Lemma~\ref{lem:uniqueness-of-formula-for-siw}},
	\\
	\alpha(u,\eta) &= \Omega(u,\eta) + \frac{a(u,\eta)}{2}.
\end{align*}
By Lemma~\ref{lem:existence-of-formula-for-siw}, every Weil generator in $W$ is of the form $\alpha(u,\eta)$ for some $u \in \sO_F^\times$ and $\eta \in T$. However, $\alpha(u,\eta)$ may not be a Weil generator (in fact, it may not even be integral) for every choice of $u \in \sO_F^\times$ and $\eta \in T$.

The first step in our proof is to characterize those $u \in \sO_F^\times$ and $\eta \in T$ for which $\alpha(u,\eta) \in W$. It turns out that this property only depends on $u$. Recall that every unit $u \in \sO_F^\times$ is of the form $\pm u_0^k$ where $k \in \ZZ$ and $u_0$ is a fundamental unit for $F$. We will show that $\alpha(u,\eta) \in W$ if and only if $k$ satisfies a certain congruence condition (see Corollary~\ref{cor:alpha-u-in-W-iff-k-congruence} below).

The next step in our proof is to compare $h(\alpha(u,\eta))$ to $h(u)$. We will show that if $\alpha(u,\eta) \in W$, then $h(\alpha(u,\eta)) \approx h(u)^2$ (see Lemma~\ref{lem:bound-alpha-by-u-squared} below). Combined with above, this reduces the problem of counting Weil generators of bounded height to counting units in $\sO_F$ of bounded height.

\begin{lemma}\label{lem:g2-alpha-wg-depends-only-on-u}
	$\alpha(u,\eta) \in W$ if and only if $\alpha(\pm u,\eta') \in W$ for all $\eta' \in T$.
\end{lemma}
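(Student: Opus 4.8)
The plan is to reduce the statement to two explicit identities that relate $\alpha(u,\eta)$ to its complex conjugate and to its negative under the substitutions $u \mapsto -u$ and $\eta \mapsto -\eta$, and then to invoke the fact (Example~\ref{ex:operations-on-weil-gens}) that $W$ is closed under complex conjugation and under $\alpha \mapsto -\alpha$. Since $F$ is real quadratic, $T = \{\eta_0, -\eta_0\}$ up to integer translation with $\eta_0 = (\disc_F + \sqrt{\disc_F})/2$, so it suffices to show that the four elements $\alpha(\pm u, \pm\eta_0)$ all lie in $W$ or all fail to.

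First I would record that $\alpha(u,\eta)$ depends on $\eta$ only through its class modulo $\ZZ$: replacing $\eta$ by $\eta + m$ with $m \in \ZZ$ shifts $\Omega(u,\eta)$ by $m/2$ and, by the uniqueness in Lemma~\ref{lem:uniqueness-of-formula-for-siw}, forces the correction term to become $a(u,\eta) - m$, so the two changes cancel and $\alpha(u,\eta+m) = \alpha(u,\eta)$. This makes $\alpha(\cdot,\cdot)$ well-defined on $T$ and lets me ignore the choice of representative.

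The heart of the argument is the pair of identities
\[
	\alpha(-u,\eta) = \overline{\alpha(u,\eta)}, \qquad \alpha(-u,-\eta) = -\alpha(u,\eta).
\]
For the first, since $u,\eta \in F$ are fixed by complex conjugation we have $\overline{\Omega(u,\eta)} = \Omega(-u,\eta)$; and because $\norm_{K/F}(\overline{\beta}) = \norm_{K/F}(\beta)$ for every $\beta \in K$, the unique rational correction term attached to $\Omega(-u,\eta)$ equals $a(u,\eta)$. As $a(u,\eta)$ is real, adding it commutes with conjugation, giving $\alpha(-u,\eta) = \overline{\Omega(u,\eta)} + a(u,\eta)/2 = \overline{\alpha(u,\eta)}$. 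For the second, $\Omega(-u,-\eta) = -\Omega(u,\eta)$, and since $\norm_{K/F}(-\Omega + c) = \norm_{K/F}(\Omega - c)$, the uniqueness of the correction term forces $a(-u,-\eta) = -a(u,\eta)$, whence $\alpha(-u,-\eta) = -\alpha(u,\eta)$. The same norm-rationality computations show the correction term exists for one of these triples exactly when it exists for all of them, so the cases where some $\alpha(\pm u,\pm\eta)$ is undefined are handled symmetrically.

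Finally I would combine these with Example~\ref{ex:operations-on-weil-gens}: $\alpha(u,\eta) \in W$ iff $\overline{\alpha(u,\eta)} = \alpha(-u,\eta) \in W$, and iff $-\alpha(u,\eta) = \alpha(-u,-\eta) \in W$. Applying both equivalences (the second also with $u$ replaced by $-u$) shows that $\alpha(u,\eta)$, $\alpha(-u,\eta)$, $\alpha(u,-\eta)$, and $\alpha(-u,-\eta)$ lie in $W$ together, which is exactly the asserted equivalence over $\eta' \in T$ and both signs. I expect the only delicate point to be the bookkeeping around $a(u,\eta)$ — verifying that its defining norm-rationality condition is preserved, and its value merely negated, under $u \mapsto -u$ and $\eta \mapsto -\eta$, so that the identities hold at the level of the canonically normalized $\alpha(u,\eta)$ rather than merely up to a rational translate.
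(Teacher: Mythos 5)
Your proof is correct and takes essentially the same route as the paper: both reduce the claim to the identities $\overline{\alpha(u,\eta)} = \alpha(-u,\eta)$ and $-\alpha(u,\eta) = \alpha(-u,-\eta)$ (with $-\eta$ understood up to integer translation back into $T$) and then conclude from the closure of $W$ under negation and complex conjugation as in Example~\ref{ex:operations-on-weil-gens}. The only cosmetic difference is that you verify these identities for arbitrary pairs $(u,\eta)$ directly from the norm-rationality characterization of $a(u,\eta)$, whereas the paper deduces them from the uniqueness of the triple in Lemma~\ref{lem:existence-of-formula-for-siw}; your variant makes the non-membership direction of the biconditional slightly more explicit, but the argument is the same.
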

\begin{proof}
	Because $F$ is a real quadratic field, $\#T = 2$. Let $\eta_1,\eta_2$ be the distinct elements of $T$. Then $\eta_2 = -\eta_1 + b$ for some $b \in \ZZ$. It follows from Definition~\ref{defn:weil-generator} that if $\beta \in K$, then $\beta \in W$ if and only if $\{\pm\beta,\pm\overline{\beta}\} \subseteq W$ (see also Example~\ref{ex:operations-on-weil-gens}).

	By Lemma~\ref{lem:existence-of-formula-for-siw}, every Weil generator $\beta \in W$ corresponds to a unique triple $u \in \sO_F^\times$, $\eta \in T$, and $a \in \ZZ$. It is not hard to see that $-\beta$ must correspond to the triple $u',\eta',a'$ where $u' = -u$ and $\eta' = -\eta + b$ for some integer $b$. Hence $-\alpha(u,\eta_1) = \alpha(-u,\eta_2)$. A similar argument shows that $\overline{\alpha(u,\eta_1)} = \alpha(-u,\eta_1)$. Hence $\alpha(u,\eta_1) \in W$ if and only if $\{\alpha(\pm u,\eta_1), \alpha(\pm u,\eta_2)\} \subseteq W$.
\end{proof}

\begin{lemma}\label{lem:alpha-in-W-if-u-in-congruence}
	There is a set of congruence classes $S \subseteq \sO_K/4\sO_K$ such that if $u \in \sO_K^\times$ and $\eta \in T$, then $\alpha(u,\eta) \in W$ if and only if $u\mod{4\sO_K} \in S$.
\end{lemma}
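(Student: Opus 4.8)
The plan is to reduce membership in $W$ to a single integrality condition and then show that this condition only sees $u$ modulo $4\sO_K$. First I would observe that for $\alpha = \alpha(u,\eta)$, two of the three criteria in Lemma~\ref{lem:equiv-defn-for-siw} are automatic as soon as $\alpha \in \sO_K$. Indeed, since $\eta$ and $a(u,\eta)$ are fixed by complex conjugation, a direct computation gives $\alpha - \overline{\alpha} = u(\gamma - \overline{\gamma})$; because $u$ is a unit and $\sO_K = \sO_F[\gamma]$, this shows $(\diff_{K/F}(\alpha)) = (u(\gamma-\overline{\gamma})) = (\gamma - \overline{\gamma}) = \diff_{K/F}$ via Lemma~\ref{lem:relative-monogenic-iff-relative-different-prinicpal}, so condition~\ref{defn:siw2-diff} holds regardless. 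Likewise $\alpha + \overline{\alpha} = \eta + a(u,\eta)$, and once $\alpha \in \sO_K$ we get $a(u,\eta) = \tr_{K/F}(\alpha) - \eta \in \sO_F \cap \QQ = \ZZ$, whence $\ZZ[\alpha + \overline{\alpha}] = \ZZ[\eta] = \sO_F$ and condition~\ref{defn:siw2-monogenic-over-F} holds; condition~\ref{defn:siw2-nrm} follows since $\alpha\overline\alpha = \norm_{K/F}(\alpha) \in \sO_F\cap\QQ = \ZZ$. Thus $\alpha(u,\eta) \in W$ if and only if $\alpha(u,\eta) \in \sO_K$.

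Next I would make $a(u,\eta)$ explicit. Writing $\delta = \gamma - \overline{\gamma}$, conjugation sends $\delta \mapsto -\delta$, so $\delta^2 \in \sO_F$, and
\[
	\norm_{K/F}\!\left(\Omega(u,\eta)\right) = \frac{(u\delta + \eta)(-u\delta + \eta)}{4} = \frac{\eta^2 - u^2\delta^2}{4}.
\]
Since $\ZZ[\eta] = \sO_F$, the set $\{1,\eta\}$ is a $\ZZ$-basis of $\sO_F$; expanding $\eta^2 = p + q\eta$ and $u^2\delta^2 = r + s\eta$ with $p,q,r,s \in \ZZ$, the defining relation for $a(u,\eta)$ (the vanishing of the $\eta$-coefficient of the norm) forces $a(u,\eta) = -(q-s)/2$. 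The key step is then to replace $u$ by $u' = u + 4c$ with $c \in \sO_F$ and track the change. Here $u'^2\delta^2 - u^2\delta^2 = (8cu + 16c^2)\delta^2 \in 8\sO_F$, so its $\eta$-coordinate satisfies $s' \equiv s \pmod 8$; hence $a(u',\eta) - a(u,\eta) = (s' - s)/2 \in 4\ZZ$. Consequently
\[
	\alpha(u',\eta) - \alpha(u,\eta) = \frac{(u'-u)\delta}{2} + \frac{a(u',\eta)-a(u,\eta)}{2} = 2c\delta + \frac{a(u',\eta)-a(u,\eta)}{2},
\]
and both summands lie in $\sO_K$ (the second is an even integer). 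Therefore $\alpha(u,\eta) \in \sO_K \Leftrightarrow \alpha(u',\eta) \in \sO_K$.

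Finally I would assemble these pieces. Because $u,u' \in \sO_F$ and $4\sO_K \cap \sO_F = 4\sO_F$, the condition $u \equiv u' \pmod{4\sO_K}$ is exactly $u' - u \in 4\sO_F$, so the previous paragraph shows that integrality of $\alpha(u,\eta)$, and hence membership of $\alpha(u,\eta)$ in $W$, depends only on the class of $u$ in $\sO_K/4\sO_K$. Fixing $\eta$ and setting $S = \{\, u \bmod 4\sO_K : u \in \sO_F^\times,\ \alpha(u,\eta) \in W \,\}$ then gives the claim, and the independence from the choice of $\eta \in T$ is exactly Lemma~\ref{lem:g2-alpha-wg-depends-only-on-u}. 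I expect the only real obstacle to be the factor-of-two bookkeeping in the middle step: one must verify carefully that the perturbation of $u$ by $4\sO_F$ shifts $s$ by a multiple of $8$ (not merely $4$), so that after dividing $a(u',\eta)-a(u,\eta)$ by $2$ the correction stays integral; everything else is a routine consequence of Lemma~\ref{lem:equiv-defn-for-siw}.
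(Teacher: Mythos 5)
Your proposal is correct and follows essentially the same route as the paper's proof: reduce membership in $W$ to the single condition $\alpha(u,\eta) \in \sO_K$ via Lemma~\ref{lem:equiv-defn-for-siw}, track how $a(u,\eta)$ changes when $u$ is perturbed by $4\sO_F$ (your explicit coordinate computation even sharpens the paper's bound $a(u',\eta) - a(u,\eta) \in 2\ZZ$ to $4\ZZ$), and invoke Lemma~\ref{lem:g2-alpha-wg-depends-only-on-u} for independence of the choice of $\eta \in T$. Your observation that $4\sO_K \cap \sO_F = 4\sO_F$ makes explicit a point the paper leaves implicit (its $\beta \in \sO_K$ is in fact forced to lie in $\sO_F$), but this is a matter of exposition, not of substance.
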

\begin{proof}
	By Lemma~\ref{lem:g2-alpha-wg-depends-only-on-u}, it suffices to show that if $u' \equiv u \mod{4\sO_K}$ then $\alpha(u,\eta) \in W$ if and only if $\alpha(u',\eta) \in W$. The set $S$ will be $\{u\mod{4\sO_K} \st \alpha(u,\eta) \in W \text{ for some $\eta \in T$} \}$.

	We will start by showing that $\alpha(u,\eta) \in W$ if and only if $4\alpha(u,\eta) \equiv 0 \mod{4\sO_K}$. Note that $\alpha(u,\eta)$ satisfies $\alpha(u,\eta)\overline{\alpha(u,\eta)} \in \QQ$, $\alpha(u,\eta) + \overline{\alpha(u,\eta)} = \eta + a$, and $\alpha(u,\eta) - \overline{\alpha(u,\eta)} = u\left(\gamma - \overline{\gamma}\right)$. It follows from Lemma~\ref{lem:equiv-defn-for-siw} that $\alpha(u,\eta) \in W$ if and only if $\alpha(u,\eta) \in \sO_K$. By our construction, $\alpha(u,\eta) \in (1/4)\sO_K$, so $\alpha(u,\eta) \in \sO_K$ if and only if $4\alpha(u,\eta) \equiv 0 \mod{4\sO_K}$.

	Next we will show that the equivalence class of $4\alpha(u,\eta) \mod{4\sO_K}$ depends only $u\mod{4\sO_K}$. It is sufficient to show that $2a(u,\eta)\mod{4}$ depends only on $u \mod{4\sO_K}$ as the dependence of $4\Omega(u,\eta)$ is clear. The construction of $a(u,\eta)$ given in Lemma~\ref{lem:uniqueness-of-formula-for-siw} depends only on the coefficients of $\Omega(u,\eta)\overline{\Omega(u,\eta)}$ with respect to the basis $\{1,\eta\}$. Suppose $u'$ is another unit with $u' = u + 4\beta$ for some $\beta \in \sO_K$. Then $\Omega(u',\eta) = \Omega(u,\eta) + 2(\gamma - \overline{\gamma})\beta$. So
\begin{align*}
	\Omega(u',\eta)\overline{\Omega(u',\eta)}
	&= \Omega(u,\eta)\overline{\Omega(u,\eta)} - 2(\gamma - \overline{\gamma})\beta(\Omega(u,\eta) + \overline{\Omega(u,\eta)}) - 4(\gamma - \overline{\gamma})^2\beta^2
	\\
	&= \Omega(u,\eta)\overline{\Omega(u,\eta)} - 2(\gamma - \overline{\gamma})\beta\eta - 4(\gamma - \overline{\gamma})^2\beta^2.
\end{align*}
	Looking at the coefficients with respect to the basis $\{1,\eta\}$ shows that $a(u',\eta) - a(u,\eta) \in 2\ZZ$. Therefore $4\alpha(u,\eta) \equiv 4\alpha(u',\eta) \mod{4\sO_K}$.
\end{proof}

Let $u_0$ be a fundamental unit for $F$. Then every unit $u \in \sO_F^\times$ is of the form $u = \pm u_0^k$.

\begin{corollary}\label{cor:alpha-u-in-W-iff-k-congruence}
	If $\pm u_0^k \in \sO_F^\times$ and $\eta \in T$, then $\alpha(\pm u_0^k, \pm\eta) \in W$ if and only if $k$ satisfies a certain congruence relation.
\end{corollary}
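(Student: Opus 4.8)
The plan is to combine Lemma~\ref{lem:alpha-in-W-if-u-in-congruence} with the elementary fact that the powers of a fixed element of a finite group are periodic. By Lemma~\ref{lem:alpha-in-W-if-u-in-congruence} there is a subset $S \subseteq \sO_K/4\sO_K$ such that, for any unit $u \in \sO_F^\times$ and any $\eta \in T$, the membership $\alpha(u,\eta) \in W$ depends only on the residue of $u$ modulo $4\sO_K$: explicitly, $\alpha(u,\eta) \in W$ if and only if $u \bmod 4\sO_K \in S$. In particular the property is independent of the choice of $\eta \in T$, and by Lemma~\ref{lem:g2-alpha-wg-depends-only-on-u} it is also unchanged when $u$ is replaced by $-u$. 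Thus it suffices to determine, for a fixed $\eta$, the set of $k$ for which $u_0^k \bmod 4\sO_K \in S$.

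First I would observe that $u_0$, being a unit in $\sO_F$ and hence in $\sO_K$, has image in the finite ring $\sO_K/4\sO_K$ lying in the unit group $(\sO_K/4\sO_K)^\times$. Since this group is finite, $u_0 \bmod 4\sO_K$ has some finite multiplicative order $m$. Consequently the map $k \mapsto u_0^k \bmod 4\sO_K$ is purely periodic with period dividing $m$: whenever $k \equiv k' \pmod{m}$ we have $u_0^k \equiv u_0^{k'} \pmod{4\sO_K}$.

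Combining these two observations yields the corollary. Whether $u_0^k \bmod 4\sO_K$ lies in $S$ depends only on the class of $k$ modulo $m$, so the set $\{k \in \ZZ \st \alpha(u_0^k,\eta) \in W\}$ is a union of residue classes modulo $m$; that is, $\alpha(\pm u_0^k, \pm\eta) \in W$ precisely when $k$ lies in a prescribed set of congruence classes modulo $m$. The relevant congruence relation can be made explicit by computing the order $m$ and then testing, for each of the finitely many residues $r \in \{0,1,\dots,m-1\}$, whether $u_0^r \bmod 4\sO_K \in S$.

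As for the main obstacle, there is little substance remaining at this stage, since the real work---reducing the condition $\alpha(u,\eta) \in W$ to a condition on $u$ modulo $4\sO_K$---has already been carried out in Lemma~\ref{lem:alpha-in-W-if-u-in-congruence}. The only point requiring genuine care is confirming that $u_0$ maps to an honest unit of $\sO_K/4\sO_K$, so that its powers are \emph{purely} periodic rather than merely eventually periodic; this is immediate because $u_0$ is a global unit and hence invertible modulo any ideal.
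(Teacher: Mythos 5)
Your proposal is correct and follows essentially the same route as the paper, whose proof is the one-line observation that the corollary follows from Lemma~\ref{lem:g2-alpha-wg-depends-only-on-u} and Lemma~\ref{lem:alpha-in-W-if-u-in-congruence} together with the fact that $u_0$ has finite multiplicative order in $(\sO_K/4\sO_K)^\times$. You have simply written out the same argument in more detail, including the (correct) observation that global units remain invertible modulo $4\sO_K$, so the powers of $u_0$ are purely periodic.
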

\begin{proof}
	This follows directly from Lemma~\ref{lem:g2-alpha-wg-depends-only-on-u} and Lemma~\ref{lem:alpha-in-W-if-u-in-congruence} using the fact that $u_0$ has finite multiplicative order in $(\sO_K/4\sO_K)^\times$.
\end{proof}

Next we will show that $h(\alpha) \approx h(u)^2$. In order to compare $h(\alpha)$ and $h(u)$, we will need the following lemmas. For $\eta \in T$, we can write any $\beta \in F$ in the form $\beta = b + c\eta$ for some unique $b,c \in \QQ$. The lemmas below will be used to compare $h(\beta)$ and $|c|$.

\begin{lemma}\label{lem:bound-hbeta-below-by-c}
	Let $\eta \in \overline{\QQ}\setminus\QQ$ and let $\beta = b + c\eta$ for some $b,c \in \QQ$. Then there is a positive constant $C_1$, depending only on $\eta$, such that if $c \neq 0$, then
	\[
		C_1|c| \leq h(\beta).
	\]
\end{lemma}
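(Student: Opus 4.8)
The plan is to exploit the difference of two Galois conjugates of $\beta$ in order to isolate $c$. The key preliminary observation is that since $c \neq 0$ we have $\QQ(\beta) = \QQ(\eta)$, because $\eta = (\beta - b)/c$; in particular $\beta$ and $\eta$ generate the same field and so have the same set of complex embeddings, which is exactly the set appearing in the definition of $h(\beta)$. First I would note that because $\eta \notin \QQ$, its minimal polynomial over $\QQ$ has degree at least $2$ and (being in characteristic $0$) is separable, so there exist two embeddings $\sigma_1, \sigma_2 \colon \QQ(\eta) \to \CC$ with $\sigma_1(\eta) \neq \sigma_2(\eta)$.

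Next, since $b$ and $c$ lie in $\QQ$ they are fixed by every embedding, so $\sigma_i(\beta) = b + c\,\sigma_i(\eta)$ for $i = 1,2$, and subtracting gives $\sigma_1(\beta) - \sigma_2(\beta) = c\bigl(\sigma_1(\eta) - \sigma_2(\eta)\bigr)$. Taking absolute values and applying the triangle inequality yields $|c|\cdot|\sigma_1(\eta) - \sigma_2(\eta)| = |\sigma_1(\beta) - \sigma_2(\beta)| \leq |\sigma_1(\beta)| + |\sigma_2(\beta)| \leq 2\,h(\beta)$, where the last step uses that $\sigma_1$ and $\sigma_2$ are among the embeddings of $\QQ(\beta) = \QQ(\eta)$ over which the maximum defining $h(\beta)$ is taken. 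Rearranging gives exactly $C_1|c| \leq h(\beta)$ with $C_1 = \tfrac{1}{2}|\sigma_1(\eta) - \sigma_2(\eta)| > 0$, a quantity that depends only on $\eta$ and not on $b$ or $c$.

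To make the constant canonical and as large as possible, I would take $\sigma_1, \sigma_2$ to be a pair of embeddings maximizing $|\sigma_1(\eta) - \sigma_2(\eta)|$, i.e. set $C_1 = \tfrac{1}{2}\max_{\sigma,\tau}|\sigma(\eta) - \tau(\eta)|$ over all pairs of embeddings of $\QQ(\eta)$ into $\CC$; this is strictly positive precisely because $\eta \notin \QQ$ guarantees at least two distinct conjugates. There is essentially no serious obstacle in this argument: the only points requiring care are the identification $\QQ(\beta) = \QQ(\eta)$ (which is what licenses bounding $|\sigma_i(\beta)|$ by $h(\beta)$) and the verification that $C_1$ is independent of the particular $\beta$. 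Note also that the bound is inherently one-sided, controlling $|c|$ but not the contribution of $b$; this is consistent with the statement, which only asks for a lower bound on $h(\beta)$ in terms of $|c|$.
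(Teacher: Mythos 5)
Your proof is correct and takes essentially the same approach as the paper: both choose two embeddings $\sigma,\tau$ with $\sigma(\eta)\neq\tau(\eta)$, set $C_1 = \tfrac{1}{2}|\sigma(\eta)-\tau(\eta)|$, and apply the triangle inequality, the only cosmetic difference being that the paper phrases this geometrically ($-b/c$ cannot be within $C_1$ of both conjugates of $\eta$, then scales by $|c|$) while you subtract the two conjugates of $\beta$ directly. Your explicit verification that $\QQ(\beta)=\QQ(\eta)$ when $c\neq 0$, which licenses bounding $|\sigma_i(\beta)|$ by $h(\beta)$, is a point the paper leaves implicit.
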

\begin{proof}
	Let $\sigma$ and $\tau$ be embeddings $\overline{\QQ} \to \CC$ such that $\sigma(\eta) \neq \tau(\eta)$. Let $C_1 = |\sigma(\eta) - \tau(\eta)|/2$, i.e. $C_1$ is half the distance from $\sigma(\eta)$ to $\tau(\eta)$. Since $-b/c$ cannot be closer than $C_1$ to both $\sigma(\eta)$ and $\tau(\eta)$, we have that
	\[
		C_1
		\leq \max\left\{\left|\frac{b}{c} + \sigma(\eta)\right|,\left|\frac{b}{c} + \tau(\eta)\right|\right\}
		\leq h\left(\frac{b}{c} + \eta\right).
	\]
	Up to replacing $b,c$ with $-b,-c$, we may assume that $c$ is positive. The claim then follows from multiplying this inequality by $c$ and using the fact that $h$ commutes with multiplication by a positive rational number.
\end{proof}

\begin{lemma}\label{lem:bound-hbeta-above-by-c}
	Let $F$ be a real quadratic field and let $\eta \in \sO_F$. There exists a positive constant $C_2$, depending only on $\eta$, such that if $\beta = b + c\eta$ with $b,c \in \ZZ$ and $c \neq 0$, then
	\[
		h(\beta) \leq C_2|c|\sqrt{\left|\norm_{F/\QQ}\beta\right|}.
	\]
\end{lemma}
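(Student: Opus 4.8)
The plan is to exploit that $F$ is quadratic, so it has exactly two embeddings $\sigma,\tau\colon F\to\CC$ (equivalently, the identity and complex/real conjugation). Writing $\beta = b + c\eta$, the two conjugates are $\sigma(\beta) = b + c\,\sigma(\eta)$ and $\tau(\beta) = b + c\,\tau(\eta)$, so that $h(\beta) = \max\{|\sigma(\beta)|,|\tau(\beta)|\}$ while $|\norm_{F/\QQ}(\beta)| = |\sigma(\beta)|\cdot|\tau(\beta)|$. The strategy is to control the larger conjugate $h(\beta)$ in terms of the smaller one together with the ``gap'' between the two conjugates, and then to bound the smaller one by $\sqrt{|\norm_{F/\QQ}(\beta)|}$.

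First I would record the gap estimate. The $b$-terms cancel in $\sigma(\beta) - \tau(\beta) = c(\sigma(\eta) - \tau(\eta))$, so setting $D = |\sigma(\eta) - \tau(\eta)|$ (a nonnegative constant depending only on $\eta$) gives $|\sigma(\beta) - \tau(\beta)| = |c|\,D$. Relabelling the embeddings so that $h(\beta) = |\sigma(\beta)|$ and $m := |\tau(\beta)| = \min\{|\sigma(\beta)|,|\tau(\beta)|\}$, the reverse triangle inequality yields $h(\beta) - m \le |\sigma(\beta) - \tau(\beta)| = |c|\,D$.

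Next I would bound the smaller conjugate $m$. Since $m \le h(\beta)$ and $h(\beta)\cdot m = |\norm_{F/\QQ}(\beta)|$, we have $m^2 \le h(\beta)\,m = |\norm_{F/\QQ}(\beta)|$, hence $m \le \sqrt{|\norm_{F/\QQ}(\beta)|}$. Combining with the previous step gives
\[
    h(\beta) \le m + |c|\,D \le \sqrt{|\norm_{F/\QQ}(\beta)|} + |c|\,D.
\]

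Finally I would clean up the constants. Because $\beta \in \sO_F$ it is an algebraic integer; if $\beta \neq 0$ then $\norm_{F/\QQ}(\beta)$ is a nonzero rational integer, so $\sqrt{|\norm_{F/\QQ}(\beta)|}\ge 1$ (and the case $\beta = 0$ makes the asserted inequality trivial, as does $D=0$ when $\eta\in\QQ$). Moreover $|c|\ge 1$. Hence $\sqrt{|\norm_{F/\QQ}(\beta)|} \le |c|\sqrt{|\norm_{F/\QQ}(\beta)|}$ and $|c|\,D \le |c|\,D\sqrt{|\norm_{F/\QQ}(\beta)|}$, giving $h(\beta) \le (1 + D)\,|c|\sqrt{|\norm_{F/\QQ}(\beta)|}$, so $C_2 = 1 + |\sigma(\eta)-\tau(\eta)|$ works. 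The only real subtlety is that $h(\beta)$ is defined as a \emph{maximum} rather than directly by the norm; the device of bounding the smaller conjugate by $\sqrt{|\norm_{F/\QQ}(\beta)|}$ (using $m\le h(\beta)$) is exactly what makes the two sides comparable, and everything else is routine.
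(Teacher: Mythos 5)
Your proof is correct, and it takes a genuinely different route from the paper's. The paper starts from the triangle inequality $h(\beta) \leq |b| + |c|h(\eta)$ and then controls $|b|$ by viewing the norm equation $b^2 + bct + c^2 n_\eta = n_\beta$ (with $t = \tr_{F/\QQ}(\eta)$, $n_\eta = \norm_{F/\QQ}(\eta)$, $n_\beta = \norm_{F/\QQ}(\beta)$) as a quadratic in $b$ and applying the quadratic formula; after using $|n_\beta|/c^2 \leq |n_\beta|$ and factoring out $\sqrt{|n_\beta|}$ --- both legitimate for the same integrality reasons you invoke, namely $|c| \geq 1$ and $|n_\beta| \geq 1$ --- this yields $|b| \leq |c|\sqrt{|n_\beta|}$ times an explicit constant in $t$ and $n_\eta$. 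You instead work directly with the two conjugates: the exact gap identity $|\sigma(\beta) - \tau(\beta)| = |c|\,|\sigma(\eta)-\tau(\eta)|$, the observation that the smaller conjugate is at most $\sqrt{|\norm_{F/\QQ}\beta|}$ since its square is at most the product of the two, and the reverse triangle inequality to pass from the smaller conjugate to the larger. What your route buys: it is shorter, it is structural rather than computational, it produces a cleaner explicit constant $C_2 = 1 + |\sigma(\eta)-\tau(\eta)|$ (versus the one implicit in the paper's chain of estimates, which also carries the extra $h(\eta)$ from the triangle-inequality step), and, unlike the quadratic-formula step, it is not tied to degree $2$: in a degree-$n$ field all pairwise conjugate gaps of $b + c\eta$ are $|c|$ times gaps of $\eta$ and the smallest conjugate is at most $|\norm_{F/\QQ}\beta|^{1/n}$, so the same argument would give $h(\beta) \leq C|c|\,|\norm_{F/\QQ}\beta|^{1/n}$. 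What the paper's route buys is a direct bound on the coefficient $|b|$ itself, though nothing downstream needs that. You are also more careful than the paper about the degenerate cases $\beta = 0$ and $\eta \in \QQ$ (where your factor $D$ vanishes and the claim is immediate), which the paper's proof passes over silently; in the paper's intended application $\eta$ generates $\sO_F$, so those cases never arise, but your handling makes the lemma correct exactly as stated.
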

\begin{proof}
	By the triangle inequality, $h(\beta) \leq |b| + |c|h(\eta)$. Therefore it is enough to bound $|b|$ from above by a constant times $|c| \sqrt{\left|\norm_{F/\QQ}(\beta)\right|}$. Let $t = \tr_{F/\QQ}(\eta)$, $n_{\eta} = \norm_{F/\QQ}(\eta)$, and $n_{\beta} = \norm_{F/\QQ}(\beta)$. Then we can write the norm of $b + c\eta = \beta$ as $b^2 + bct + c^2n_\eta = n_\beta$. Therefore
	\begin{align*}
		|b| &= \frac{\left|-ct \pm \sqrt{(ct)^2 - 4\left(c^2n_{\eta} - n_{\beta}\right)}\right|}{2}
		\\
		&\leq |c| \cdot \left(\frac{|t| + \sqrt{t^2 + 4\left(|n_{\eta}| + |n_{\beta}|/c^2\right)}}{2}\right)
		\\
		&\leq |c| \cdot \left(\frac{|t| + \sqrt{t^2 + 4\left(|n_{\eta}| + |n_{\beta}|\right)}}{2}\right)
		\\
		&\leq |c| \sqrt{|n_{\beta}|} \cdot \left(\frac{|t| + \sqrt{t^2 + 4\left(|n_{\eta}| + 1\right)}}{2}\right).
	\end{align*}
\end{proof}

Next we will combine the two previous lemmas to show that if $\alpha \in W$ and $u \in \sO_F^\times$ is the associated unit, then $h(\alpha)$ is approximately $h(u)^2$.

\begin{lemma}\label{lem:bound-alpha-by-u-squared}
	There exist positive constants $C_3,C_4,C_5$, depending only on $\gamma$ and $T$, such that if $u\in \sO_F^\times$, $\eta \in T$, and $h(u) \geq C_3$, then
	\[
		C_4h(u)^2 \leq h(\alpha(u,\eta)) \leq C_5h(u)^2.
	\]
\end{lemma}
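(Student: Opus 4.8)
The plan is to reduce everything to the single rational number $\alpha\overline{\alpha}$, which equals $h(\alpha)^2$. Since $a(u,\eta)$ is chosen precisely so that $\norm_{K/F}(\alpha(u,\eta)) = \alpha\overline{\alpha} \in \QQ$, the argument of Remark~\ref{rem:height-norm-equiv} applies verbatim (it uses only $\alpha\overline{\alpha} \in \QQ$, not integrality of $\alpha$): every embedding $\sigma\colon K \to \CC$ satisfies $|\sigma(\alpha)|^2 = \sigma(\alpha\overline{\alpha}) = \alpha\overline{\alpha}$, so $h(\alpha)^2 = \alpha\overline{\alpha}$. Writing $\delta = (\gamma - \overline{\gamma})^2 \in \sO_F$ (a fixed, totally negative element, nonzero since $\gamma$ generates $K/F$), and using $\alpha + \overline{\alpha} = \eta + a$ together with $\alpha - \overline{\alpha} = u(\gamma - \overline{\gamma})$, I would record the key identity
\[
4h(\alpha)^2 = (\alpha+\overline{\alpha})^2 - (\alpha - \overline{\alpha})^2 = (\eta + a)^2 - u^2\delta .
\]
Both terms on the right lie in $F$, while their difference is forced to be rational.

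First I would analyze $\beta = u^2\delta = (\alpha - \overline{\alpha})^2 \in \sO_F$, writing $\beta = b + c\eta$ with $b,c \in \ZZ$ in the integral basis $\{1,\eta\}$ of $\sO_F = \ZZ[\eta]$. The decisive observation is that $u$ is a unit, so $\norm_{F/\QQ}(\beta) = \norm_{F/\QQ}(u)^2\norm_{F/\QQ}(\delta) = \norm_{F/\QQ}(\delta)$ is a fixed nonzero integer. Feeding this into Lemma~\ref{lem:bound-hbeta-above-by-c} gives $h(\beta) \le C_2 |c|\sqrt{|\norm_{F/\QQ}(\delta)|}$, while Lemma~\ref{lem:bound-hbeta-below-by-c} gives $C_1|c| \le h(\beta)$; together these pin down $|c|$ and $h(\beta)$ to within constant multiples depending only on $\eta$ and $\delta$. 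A short estimate on the two real embeddings $\tau$ of $F$ then shows $h(\beta) = \max_\tau |\tau(u)|^2|\tau(\delta)|$ is comparable to $h(u)^2$: the embedding realizing $h(u)$ forces $h(\beta) \ge h(u)^2\min_\tau|\tau(\delta)|$, and $|\tau(u)|^2 \le h(u)^2$ for every $\tau$ forces $h(\beta) \le h(u)^2\max_\tau|\tau(\delta)|$. Hence $|c|$ is comparable to $h(u)^2$, and in particular $c \neq 0$ once $h(u) \ge C_3$.

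Finally I would extract $a$ and conclude. Expanding $(\eta+a)^2 = (e_0 + a^2) + (e_1 + 2a)\eta$, where $\eta^2 = e_0 + e_1\eta$, the requirement that $(\eta+a)^2 - \beta$ be rational forces its $\eta$-coefficient $e_1 + 2a - c$ to vanish, so $a = (c - e_1)/2$ and thus $|a|$ is comparable to $|c|$, hence to $h(u)^2$. Reading off the rational part gives $4h(\alpha)^2 = a^2 + (e_0 - b)$, where $a^2$ is comparable to $h(u)^4$, while $|b| \le C\,h(\beta)$ is $O(h(u)^2)$ (again bounding $b$ through the two embeddings of $\beta$). The main obstacle, and the reason for the hypothesis $h(u) \ge C_3$, is exactly that the quadratic term $a^2$ (of size $h(u)^4$) must be shown to dominate the remainder $e_0 - b$ (only of size $h(u)^2$); once $C_3$ is taken large enough for this, the remainder is absorbed and $4h(\alpha)^2$ is comparable to $h(u)^4$, which gives $C_4 h(u)^2 \le h(\alpha) \le C_5 h(u)^2$. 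Throughout, all constants depend only on $\delta = (\gamma - \overline{\gamma})^2$ and on the finitely many $\eta \in T$ (finiteness by Theorem~\ref{thm:finite-monogenerators}), so taking the worst case over $\eta \in T$ yields constants depending only on $\gamma$ and $T$, as required. The conceptual point is that the quadratic dependence $h(\alpha) \sim h(u)^2$ arises because the unit $u$ enters $\alpha - \overline{\alpha}$ linearly but enters $\alpha\overline{\alpha} = h(\alpha)^2$ quadratically, through $a$.
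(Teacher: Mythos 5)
Your proof is correct, and its engine is the same as the paper's: the element you call $\beta = u^2\delta = (\alpha - \overline{\alpha})^2$ is, up to the factor $-4$, exactly the paper's $\beta(u,\eta) = (\Omega - \eta/2)(\overline{\Omega} - \eta/2) = -u^2(\gamma - \overline{\gamma})^2/4$, and both arguments exploit that its $F/\QQ$-norm is independent of the unit $u$, apply Lemmas~\ref{lem:bound-hbeta-below-by-c} and \ref{lem:bound-hbeta-above-by-c} to its $\eta$-coefficient to get $|c| \asymp |a| \asymp h(u)^2$, and dispose of the degenerate case $\beta \in \QQ$ by the same growth-versus-constant comparison. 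Where you genuinely diverge is the final assembly. The paper separately proves $h(\Omega(u,\eta)) \asymp h(u)$ by a triangle-inequality computation and then combines this with $|a(u,\eta)| \asymp h(u)^2$ to control $h(\alpha) = h(\Omega + a/2)$; you instead invoke the CM identity $h(\alpha)^2 = \alpha\overline{\alpha}$ (valid, as you note, since $\alpha\overline{\alpha} \in \QQ$ and complex conjugation commutes with every embedding of a CM field) together with the exact expansion $4\alpha\overline{\alpha} = (\eta+a)^2 - u^2\delta = a^2 + (e_0 - b)$, so that the dominance of $a^2 \asymp h(u)^4$ over the $O(h(u)^2)$ remainder yields the two-sided bound directly. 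Your route is shorter, eliminates the $h(\Omega)$ estimates entirely, and makes the role of the threshold $C_3$ transparent (it is exactly where $a^2$ beats $e_0 - b$); the paper's route uses only the triangle inequality and would survive without the height-norm identity. Incidentally, your key identity is equation~(\ref{eq:omega-omegabar-eq-in-terms-of-A-and-B}) from the proof of Proposition~\ref{prop:W-size-g>=3}, shifted by $a$, so your argument makes visible why the curves $C_\eta$ later appear. One ordering nit: both cited lemmas presuppose $c \neq 0$, so you should first rule out $c = 0$ for $h(u) \geq C_3$ --- which your bound $h(\beta) \geq h(u)^2 \min_\tau |\tau(\delta)|$ against the constant value $h(\beta) = \sqrt{\left|\norm_{F/\QQ}(\delta)\right|}$ in the rational case accomplishes --- and only then quote them; as written you cite the lemmas a sentence before establishing $c \neq 0$, but all the needed pieces are present.
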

\begin{proof}
	To prove the claim we will show that if $h(u)$ is sufficiently large, then $h(\Omega(u,\eta))$ is approximately $h(u)$ and $|a(u,\eta)|$ is approximately $h(u)^2$.

	First we will show that $h(\Omega(u,\eta))$ is approximately $h(u)$. By choosing $C_3$ large enough, we may assume that if $h(u) \geq C_3$, then
	\[
		\frac{\min_\sigma\left\{ |\sigma(\gamma - \overline{\gamma})| \right\}}{2}h(u)
		\leq
		h(u)\min_\sigma\left\{ |\sigma(\gamma - \overline{\gamma})| \right\} - h(\eta).
	\]
	Note that the right hand side is
	\[
		\leq
		h(u(\gamma - \overline{\gamma})) - h(\eta).
	\]
	By the triangle inequality, the last expression is
	\[
		\leq
		2h(\Omega(u,\eta))
		\leq
		h(u)h(\gamma - \overline{\gamma}) + h(\eta).
	\]
	We may also choose $C_3$ large enough so that $h(u) \geq h(\eta)$ for all $\eta \in T$. So the previous expression is
	\[
		\leq
		\left(h(\gamma - \overline{\gamma}) + 1\right)h(u).
	\]
	Thus we have shown that
	\[
		\frac{\min_\sigma\left\{ |\sigma(\gamma - \overline{\gamma})| \right\}}{2}h(u) \leq 2h(\Omega(u,\eta)) \leq \left(h(\gamma - \overline{\gamma}) + 1\right)h(u).
	\]

	Next we will show that $|a(u,\eta)|$ is approximately $h(u)^2$. Recall that $\alpha(u,\eta) = \Omega(u,\eta) + a(u,\eta)/2$. As seen in the proof of Lemma~\ref{lem:uniqueness-of-formula-for-siw}, $-a(u,\eta)/2$ is the coefficient of $\eta$ when $\Omega(u,\eta)\overline{\Omega(u,\eta)}$ is written with respect to the $\QQ$-basis $\{1,\eta\}$ of $F$. That is, $\Omega(u,\eta)\overline{\Omega(u,\eta)} = b - a(u,\eta)/2\eta$ for some $b \in \ZZ$.

	We would like to apply Lemmas~\ref{lem:bound-hbeta-below-by-c} and \ref{lem:bound-hbeta-above-by-c} to relate $|a(u,\eta)|$ to $h(\Omega(u,\eta)\overline{\Omega(u,\eta)})$. However, $\norm_{F/\QQ}(\Omega(u,\eta)\overline{\Omega(u,\eta)})$ may be large, so the bound in Lemma~\ref{lem:bound-hbeta-above-by-c} is not useful. To get around this issue, we will consider $\Omega(u,\eta) - \eta/2$ instead of $\Omega(u,\eta)$. Let $\beta(u,\eta) = (\Omega(u,\eta) - \eta/2)(\overline{\Omega(u,\eta)} - \eta/2)$. Note that
	\[
		\beta(u,\eta) = \frac{-u^2(\gamma - \overline{\gamma})^2}{4}
	\]
	and
	\[
		\beta(u,\eta)
		=
		b - \frac{a(u,\eta)}{2}\eta - \frac{\eta}{2}\left(\Omega(u,\eta) + \overline{\Omega(u,\eta)}\right) + \frac{\eta^2}{4}
		=
		b - \frac{a(u,\eta)}{2}\eta - \frac{\eta^2}{4}.
	\]
	The equation on the left shows that $\norm_{F/\QQ}\beta(u,\eta)$ depends only on $\gamma$, and that $h(\beta(u,\eta))$ can be bounded above and below by $h(u)^2$ times constants depending only on $\gamma$. The equation on the right shows that the coefficient of $\eta$ of $\beta(u,\eta)$ written with respect to the basis $\{1,\eta\}$ is $-a(u,\eta)/2$ plus a constant depending only on $\eta$. Therefore, assuming that $\beta(u,\eta) \notin \QQ$, we may apply Lemmas~\ref{lem:bound-hbeta-below-by-c} and \ref{lem:bound-hbeta-above-by-c} to relate $h(\beta(u,\eta))$ and $|a(u,\eta)|$. Up to replacing the constants in the lemmas by some factors that depend only on $\eta$ and $\gamma$, we have
	\[
		C_1|a(u,\eta)| \leq h(u)^2 \leq C_2|a(u,\eta)|,
	\]
	for some positive constants $C_1$ and $C_2$. Repeating the process and taking the minimum (resp. maximum) over all $\eta \in T$, we can replace $C_1$ and (resp. $C_2$) with constants that only depend on $\gamma$ and $T$ instead of $\gamma$ and $\eta$.

	The final step is to show that there are only finitely many $u \in \sO_F^\times$ and $\eta \in T$ such that such that $\beta(u,\eta) \in \QQ$. This is necessary because the hypotheses of Lemmas~\ref{lem:bound-hbeta-below-by-c} and \ref{lem:bound-hbeta-above-by-c} require that $c \neq 0$, where $c$ is the coefficient of $\eta$ of $\beta(u,\eta)$ written with respect to the basis $\{1,\eta\}$. Note that if $c = 0$, then $\beta(u,\eta) \in \QQ$ and so $h(\beta(u,\eta)) = \sqrt{\norm_{F/\QQ}(\beta(u,\eta))}$. The latter depends only on $\gamma$. However, we have already seen $h(\beta(u,\eta))$ is bounded below by a constant times $h(u)^2$. Hence there is a finite number of $u \in \sO_F^\times$ such that $\beta(u,\eta) \in \QQ$ for some $\eta \in T$.
\end{proof}

We are now ready to finish the proof of Proposition~\ref{prop:W-size-g=2} by counting the number of $u \in \sO_F^\times$ for which $\alpha(u,\eta) \in W$ for some $\eta \in T$.

\begin{proof}[Proof of Proposition~\ref{prop:W-size-g=2}]
	Let $u_0$ be a fundamental unit for $F$. By Lemma~\ref{lem:bound-alpha-by-u-squared}, there are positive constants $C_3,C_4,C_5$, which depend only on $\gamma$ and $T$, such that
	\[
		C_4h(u)^2 \leq h(\alpha(u,\eta)) \leq C_5h(u)^2
	\]
	for all $u \in \sO_F^\times$ and $\eta \in T$ with $h(u) \geq C_3$. Setting $u = \pm u_0^k$ for some $k \in \ZZ$, we can rewrite these inequalities as
	\[
		C_4h(u_0)^{2|k|} \leq h(\alpha(\pm u_0^k,\pm \eta)) \leq C_5h(u_0)^{2|k|}.
	\]

	By Corollary~\ref{cor:alpha-u-in-W-iff-k-congruence}, $\alpha(\pm u_0^k, \pm \eta) \in W$ if and only if $k$ satisfies some congruence condition. Let $\script{P}$ denote the set of such $k$. Let $S$ be the number of $\alpha(u,\eta) \in W$ such that $h(u) < C_3$. Using the inequalities above, we have
	\begin{align*}
		4 \cdot \#\left\{ k \in \script{P} \st C_5h(u_0)^{2|k|} \leq N \right\} - S
		&\leq
		\#\left\{ \alpha \in W \st h(\alpha) \leq N \right\}
		\\
		&\leq
		4 \cdot \#\left\{ k \in \script{P} \st C_4h(u_0)^{2|k|} \leq N \right\} + S.
	\end{align*}
	Let $C_6$ be the density of $\script{P}$ in $\ZZ$. Then the both sides of the above inequality are asymptotic to
	\[
		\frac{2C_6\log N}{\log h(u_0)} + O(1).
	\]
\end{proof}

\begin{example}\label{ex:asymptotic-value-number-W-N-zeta5}
	We will show how to compute the constant $\rho$ from Proposition~\ref{prop:W-size-g=2} for the field $K = \QQ(\zeta_5)$. Let $\gamma = \zeta_5$, $\eta_0 = \zeta_5 + \overline{\zeta}_5$, and $T = \{ \eta_0,-\eta_0 \}$.\footnote{It is not always true that $T$ can be chosen as fundamental units of $F$. For example, the fundamental units for $\QQ(\sqrt{6})$ are $\pm(5 + 2\sqrt{6})^{\pm 1}$ which do not generate the ring of integers $\ZZ[\sqrt{6}]$. Similarly, $\gamma + \overline{\gamma}$ is not always in $T$. For example, if $K = \QQ(\sqrt{5},\sqrt{-1})$ then $\sO_K = \ZZ[(1+\sqrt{5})/2,\sqrt{-1}]$ so we may choose $\gamma = \sqrt{-1}$ hence $\gamma + \overline{\gamma} = 0$.} A fundamental unit for $F$ is $u_0 = \zeta_5 + \overline{\zeta}_5$.

	Note that it is not always the case that $a(u,\eta) \in \ZZ$. For example, if $u = \eta_0^2$, then $a(u,\eta_0) = 5/2$. This shows that $\alpha(u,\eta)$ is not always a Weil generator.

	Next we will determine explicitly the condition on $k$ such that $\alpha(u_0^k,\eta_0) \in W$. One can show that $u_0$ has order $6$ in $(\sO_F/2\sO_F)^\times$. Table~\ref{tbl:residue-classes-phi-u-eta-slim} gives the values of $4\alpha(u_0^k,\eta_0)\mod{4\sO_K}$ for $k = 0,1,\dots,5$. It shows that $\alpha(\pm u_0^k,\pm\eta_0) \in W$ if and only if $k \not\equiv 2\mod{3}$.

	\begin{table}[h]
		\centering
		%
		%
		\begin{tabular}{c|c}
			$k$ & $4\alpha(u_0^k,\eta_0) \mod 4\sO_K$ \\\hline
			$0$ & $0$ \\
			$1$ & $0$ \\
			$2$ & $-2\zeta_5^3 - 2\zeta_5^2 + 1$ \\
			$3$ & $0$ \\
			$4$ & $0$ \\
			$5$ & $-2\zeta_5^3 - 2\zeta_5^2 + 1$
		\end{tabular}
		\caption{Values of $4\alpha(u_0^k,\eta_0)$ modulo $4\sO_K$.}\label{tbl:residue-classes-phi-u-eta-slim}
	\end{table}

	We have shown that each $k \in \ZZ$ with $k \not\equiv 2\mod{3}$ corresponds to $4$ Weil generators given by $\alpha(\pm u_0^k, \pm \eta_0)$. By Lemma~\ref{lem:bound-hbeta-below-by-c}, $h(\alpha(u,\eta)) \approx h(u)^2$. Since and each $k \not\equiv 2\mod{3}$ corresponds to $4$ Weil generators given by $\alpha(\pm u_0^k,\pm \eta_0)$, we have
	\begin{align*}
		\#\left\{ \alpha \in W \st h(\alpha) \leq N \right\}
		&=
		4\cdot\#\left\{ k \in \ZZ \st k\not\equiv 2\mod{3} \text{ and } h(u_0^k) \leq \sqrt{N} \right\} + O(1)
		\\
		&=
		4\cdot\#\left\{ k \in \ZZ \st k\not\equiv 2\mod{3} \text{ and } |k| \leq \frac{\log N}{2\log h(u_0)} \right\} + O(1)
		\\
		&= \frac{8\log N}{3\log h\left(u_0\right)} + O(1).
		\\
		&= \frac{8\log N}{3\log \left(\frac{1 + \sqrt{5}}{2}\right)} + O(1).
	\end{align*}

	Figure~\ref{fig:weil-gens-less-than-N-and-8/3-log-N} below shows the accuracy of this estimate for the number of Weil generators of bounded height.

	\begin{figure}[h]
		\centering
		%
		%
		\includegraphics[width=\linewidth]{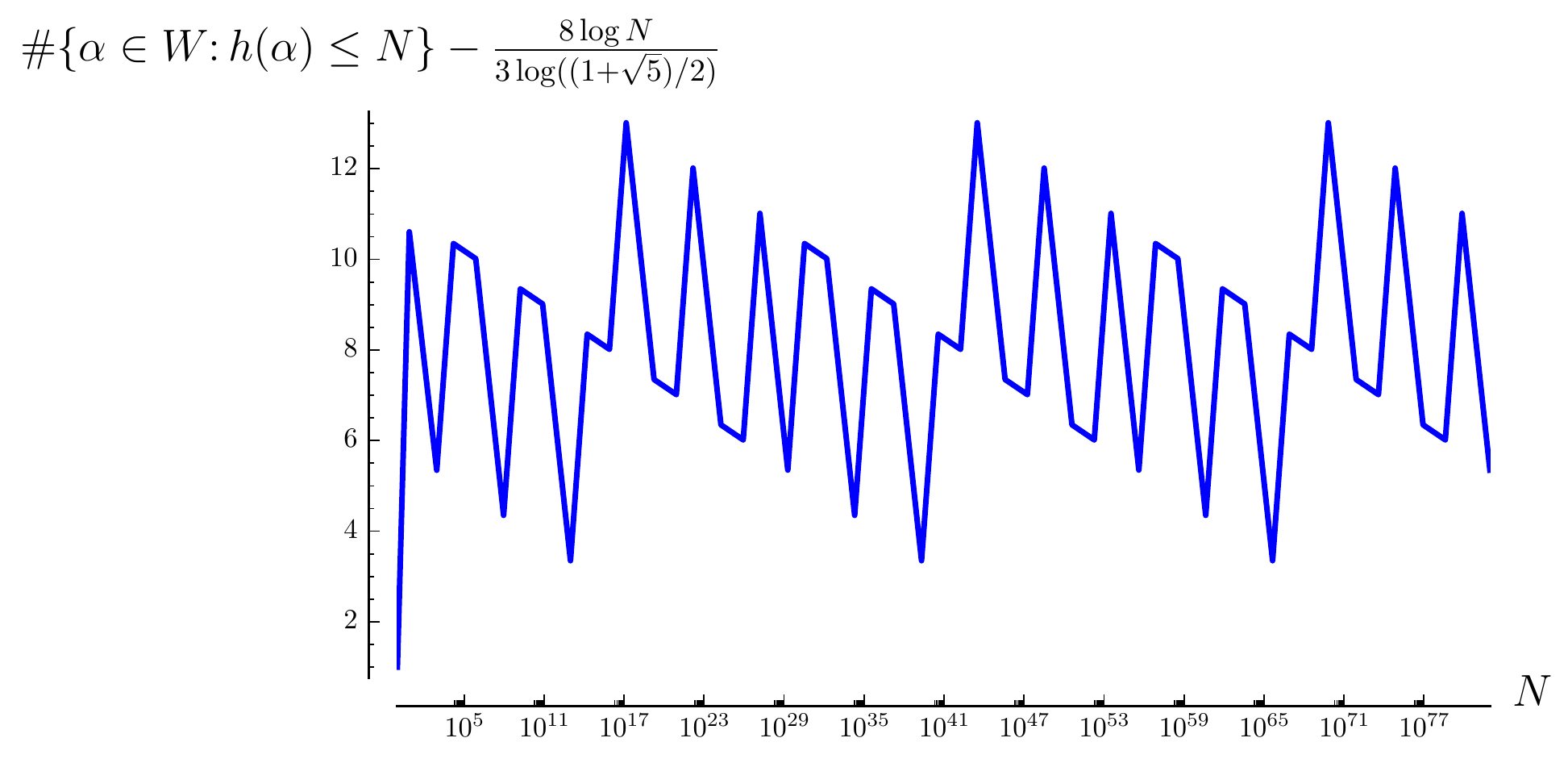}
		\caption{A comparison of the number of Weil generators of bounded height as found by Algorithm~\ref{alg:find-siw-g>=2} and the asymptotic value given in Example~\ref{ex:asymptotic-value-number-W-N-zeta5}.}
		\label{fig:weil-gens-less-than-N-and-8/3-log-N}
	\end{figure}
\end{example}

\subsection{The Case $g \geq 3$}\label{sec:g>=3}

The goal of this section is to prove Theorem~\ref{thm:W-size} in the case $g \geq 3$.

\begin{proposition}\label{prop:W-size-g>=3}
	If $K$ is a CM field of degree $2g$ with $g \geq 3$, then $W$ is finite. Moreover, if $g = 3$ then there is a computable upper bound for $\#W$.
\end{proposition}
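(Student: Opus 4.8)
The plan is to reduce the problem to counting the units $u \in \sO_F^\times$ that can occur in the parametrization of Weil generators, and then to show that any such $u$ satisfies a three-term $S$-unit equation, whose solution set is finite (and, for $g = 3$, of effectively bounded cardinality). By Lemma~\ref{lem:existence-of-formula-for-siw} every Weil generator is $\alpha(u,\eta)$ for a unique $u \in \sO_F^\times$ and $\eta \in T$, with $a \in \ZZ$ determined by $(u,\eta)$ via Lemma~\ref{lem:uniqueness-of-formula-for-siw}. Since $T$ is finite by Theorem~\ref{thm:finite-monogenerators}, it suffices to bound the number of units $u$ for which $\alpha(u,\eta) \in W$ for some $\eta$.

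Write $\theta = \gamma - \overline{\gamma}$ and $\delta = \theta^2 \in F$, so that $\alpha(u,\eta) = (\eta + a + u\theta)/2$ and $n := \alpha\overline{\alpha} = \tfrac14\left((\eta+a)^2 - u^2\delta\right) \in \ZZ$. Let $\sigma_1,\dots,\sigma_g$ be the real embeddings of $F$. Applying $\sigma_j$ gives $\sigma_j(\delta)\,\sigma_j(u)^2 = (\sigma_j(\eta)+a)^2 - 4n$ for each $j$. Subtracting the $k$-th identity from the $j$-th cancels $n$ and yields $\bigl(\sigma_j(\delta)\sigma_j(u)^2 - \sigma_k(\delta)\sigma_k(u)^2\bigr)/(\sigma_j(\eta)-\sigma_k(\eta)) = \sigma_j(\eta)+\sigma_k(\eta)+2a$. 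When $g \geq 3$ there are at least three embeddings, so comparing the expressions coming from the pairs $(j,k)$ and $(j,l)$ eliminates $a$ as well and produces a relation
\[
A\,\sigma_j(u)^2 + B\,\sigma_k(u)^2 + C\,\sigma_l(u)^2 = E,
\]
where $A,B,C,E$ are fixed nonzero elements of the Galois closure $L$ of $F$ depending only on $\gamma$ and $\eta$ (nonzero because $\delta$ is totally nonzero and the conjugates of $\eta$ are distinct). For $g \leq 2$ there are too few embeddings to eliminate $a$, and the analogous identity is a Pell-type equation with infinitely many solutions, matching Propositions~\ref{prop:W-size-g=1} and~\ref{prop:W-size-g=2}.

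Each conjugate $\sigma_i(u)$ is a unit in $\sO_L$, so dividing the displayed identity by $E$ turns it into a three-term $S$-unit equation $x + y + z = 1$ in the fixed field $L$ for a fixed finite set $S$ (the primes dividing the coefficients). By the finiteness theorem for $S$-unit equations it has finitely many nondegenerate solutions, while the degenerate solutions (those with a vanishing two-term subsum) force a ratio $\sigma_i(u)^2/\sigma_{i'}(u)^2$ to equal one of finitely many fixed values. Running the construction over all triples $(j,k,l)$ constrains every pairwise ratio $\sigma_i(u)^2/\sigma_{i'}(u)^2$ to finitely many values; together with $\norm_{F/\QQ}(u) = \pm 1$ this determines $u^2$, hence $u$, up to finitely many possibilities. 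Therefore $W$ is finite for all $g \geq 3$.

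For the effective bound when $g=3$ (where only the single triple $\{1,2,3\}$ occurs), I would replace the qualitative finiteness by an explicit upper bound on the number of solutions of the three-term $S$-unit equation, via the quantitative results of Evertse and of Beukers--Schlickewei, which depend only on $|S|$ and $[L:\QQ]$; combined with the effectively computable $T$ from Theorem~\ref{thm:finite-monogenerators} and the at most $2$-to-$1$ passage from each $(u,\eta)$ back to $\alpha$, this yields a computable upper bound for $\#W$. The main obstacle is precisely this last step: the subspace-theorem machinery is ineffective for locating solutions, so one can only bound their \emph{number}, and the delicate part is carrying out the bookkeeping---fixing $S$, controlling the degenerate subsums, and converting the count of admissible $u$ into an honest computable constant---cleanly enough that it goes through for $g=3$. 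A secondary point to check is that $A,B,C,E$ are genuinely nonzero, so that one really has a nondegenerate three-term equation rather than a shorter (and potentially infinite) one.
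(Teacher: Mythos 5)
Your proposal is correct, but it takes a genuinely different route from the paper's. The paper packages the constraint $4\Omega\overline{\Omega} = u^2\delta + \eta^2 = A + B\eta$ into the norm-form equation $\norm_{F/\QQ}(A + B\eta - \eta^2) = \left|\disc_K\right|/\disc_F^2$, so that each Weil generator produces an integral point $(A,B)$ on a plane curve $C_\eta$ of degree $g$ with $g$ distinct points at infinity; geometric irreducibility is supplied by Lemma~\ref{lem:lines plus constant irreducible}, finiteness follows from Siegel's theorem, and for $g = 3$ effectivity comes from a genus case split (genus $0$: an effectively solvable $S$-unit equation; genus $1$: Baker--Coates height bounds). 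You instead eliminate $n$ and $a$ directly via three real embeddings and land immediately on a nondegenerate three-term $S$-unit equation in the Galois closure --- in effect unwinding, in this special case, the classical unit-equation proof of Siegel's theorem for curves with at least three points at infinity (the same mechanism the paper itself deploys concretely in Lemma~\ref{lem:points-on-C-eta-in-zeta9} via Siegel's identity). Your route buys uniformity in $g \geq 3$ (no irreducibility lemma, no genus analysis) and, for $g = 3$, numerically far smaller bounds on $\#W$, since quantitative counts of solutions of unit equations are singly exponential in the rank rather than the paper's triple-exponential Baker-type constants; and because the proposition asserts only a computable upper bound for $\#W$, counting solutions without locating them does suffice for the literal statement. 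What it gives up is exactly what you flag: the subspace-theorem machinery cannot find the solutions, whereas the paper's $g = 3$ method bounds heights of points and therefore underlies the explicit determinations of $W$ in Section~\ref{sec:effectiveness}, such as Proposition~\ref{prop:weil-gens-in-zeta9}. Three small repairs and reassurances: the nonvanishing of $A,B,C,E$ does hold --- $E = \sigma_k(\eta) - \sigma_l(\eta) \neq 0$ because $\ZZ[\eta] = \sO_F$ forces the conjugates of $\eta$ to be distinct, and every $\sigma_j(\delta) \neq 0$ since $\delta = (\gamma - \overline{\gamma})^2 \neq 0$, as $\gamma - \overline{\gamma}$ generates $\diff_{K/F}$; your degenerate-case bookkeeping simplifies, since a vanishing two-term subsum in $x + y + z = 1$ forces the remaining coordinate to equal $1$, which pins one $\sigma_i(u)^2$ to a fixed value and hence pins $u^2$ directly, with no need for pairwise ratios or the norm relation (this matters because the degenerate solutions of the unit equation themselves form infinite families, so one must pass through the induced constraint on $u$ rather than count them); and for the three-term count you want the quantitative results of Evertse--Schlickewei--Schmidt (or Evertse's bounds for $n$-term $S$-unit equations) rather than Beukers--Schlickewei, which treats two-term equations $x + y = 1$.
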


Our strategy for Proposition~\ref{prop:W-size-g>=3} is as follows. Recall that every Weil generator $\alpha$ can be written in the form $(u(\gamma - \overline{\gamma}) + \eta + a)/2$ for a unique $u \in \sO_F^\times$, $\eta \in T$, and $a \in \ZZ$. Recall that by definition, $\alpha\overline{\alpha} \in \ZZ$. This condition places a significant restriction on the possible values of $u$, $\eta$, and $a$. By Lemma~\ref{lem:uniqueness-of-formula-for-siw}, $a$ is uniquely determined by $u$ and $\eta$. Therefore it suffices to show that the possible set of pairs $(u,\eta)$ arising in this way is finite. In fact, we will parameterize pairs $(u,\eta)$ by integral points on a finite union of absolutely irreducible plane curves of degree $g$. These curves will have $g$ distinct points at infinity. So by Siegel's theorem, the number of integral points is finite. When $g = 3$, the curves have genus $1$ or $0$ depending on the singularities. In the genus $0$ case, finding integral points reduces to finding solutions to an $S$-unit equation, which can be effectively determined \cite[Thm.~D.8.4]{hindry2000diophantine}. In the genus $1$ case, we may use the effective (but impractical) bounds from Baker and Coates \cite{baker1970integer}. For more details on the effective bounds, see Section~\ref{sec:effectiveness}.

We start by proving a lemma which will be used to show that the curves arising in the proof of Proposition~\ref{prop:W-size-g>=3} are geometrically irreducible.

\begin{lemma}\label{lem:lines plus constant irreducible}
	Let $f_1,\dots,f_g \in \overline{\QQ}[x,y,z]$ be homogeneous linear polynomials such that the lines in projective space defined by the vanishing of the $f_i$ intersect the line at infinity (given by $z = 0$) at distinct points. If $t \in \overline{\QQ}$ is nonzero, then $tz^g + \prod f_i$ is irreducible.
\end{lemma}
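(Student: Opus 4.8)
The plan is to reduce the homogeneous statement to an affine irreducibility question and then analyze the branches of the curve near the line at infinity. Write $f_i = \ell_i + c_i z$ with $\ell_i = a_i x + b_i y$ and $c_i \in \overline{\QQ}$; the hypothesis that the lines meet $z = 0$ at distinct points says exactly that the $\ell_i$ are pairwise non-proportional. Since $P(x,y,0) = \prod_i \ell_i \neq 0$, we have $z \nmid P$, so $P$ is irreducible in $\overline{\QQ}[x,y,z]$ if and only if its dehomogenization $Q(x,y) := P(x,y,1) = t + \prod_i(\ell_i + c_i)$ is irreducible in $\overline{\QQ}[x,y]$. After a linear change of coordinates in $x,y$ (which preserves all hypotheses and the form of $P$) I may assume every $b_i \neq 0$; then $Q$, viewed as a polynomial in $y$, has degree $g$ and constant leading coefficient $\prod_i b_i \in \overline{\QQ}^\times$.

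First I would split $Q$ over the Laurent series field $\overline{\QQ}((s))$ with $s = 1/x$. Setting $m_i = \ell_i + c_i$ and $\alpha_i = -a_i/b_i$ (pairwise distinct), the reduction of $Q$ at $s = 0$ has the $g$ distinct roots $y = \alpha_i$, so by Hensel's lemma $Q = \prod_i b_i \cdot \prod_{i=1}^g (y - R_i)$ with distinct $R_i \in \overline{\QQ}((s))$ and $R_i = \alpha_i s^{-1} + O(1)$. The crucial input is the defining relation $\prod_i m_i = -t$ along the curve $Q = 0$. On the branch $R_i$ the factor $m_i$ vanishes at $s = 0$, while each $m_j$ ($j \neq i$) grows like $\gamma_j x$ with $\gamma_j \neq 0$ coming from the nonvanishing slope differences; hence $\prod_{j \neq i} m_j = C_i\, s^{-(g-1)}(1 + O(s))$ with $C_i = \prod_{j\neq i}\gamma_j \neq 0$, and therefore $m_i = -t/\prod_{j\neq i} m_j$ has $s$-order exactly $g-1$ with leading coefficient $-t/C_i \neq 0$. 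Unwinding $R_i = (m_i - a_i x - c_i)/b_i$ gives
\[
R_i = \alpha_i s^{-1} + \beta_i + \delta_i\, s^{\,g-1} + O(s^{\,g}), \qquad \delta_i = -\frac{t}{b_i C_i} \neq 0,
\]
with a gap: the coefficients of $s^1,\dots,s^{g-2}$ all vanish. Establishing this gap together with $\delta_i \neq 0$ is exactly where $t \neq 0$ is used, and I expect it to be the main obstacle — it is the heart of the argument.

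Finally I would derive the contradiction. Suppose $Q = GH$ nontrivially. Since the leading $y$-coefficient of $Q$ lies in $\overline{\QQ}^\times$, Gauss's lemma forces both factors to have constant leading $y$-coefficient, and unique factorization in $\overline{\QQ}((s))[y]$ gives $G = c\prod_{i \in S}(y - R_i)$ for some $S$ with $1 \leq |S| \leq g-1$. As $G \in \overline{\QQ}[x][y]$, each power sum $p_m = \sum_{i\in S} R_i^m$ lies in $\overline{\QQ}[x]$, i.e. has no positive powers of $s$. Using the gap, a short computation shows that the coefficient of $s^{\,g-m}$ in $p_m$ equals $m \sum_{i\in S}\alpha_i^{\,m-1}\delta_i$; taking $m = 1,\dots,g-1$ (so that $g - m \geq 1$) yields
\[
\sum_{i\in S}\alpha_i^{\,m-1}\delta_i = 0, \qquad m = 1,\dots,g-1.
\]
The first $|S|$ of these equations form a square Vandermonde system in the distinct nodes $\{\alpha_i : i \in S\}$, which is invertible, forcing $\delta_i = 0$ for all $i \in S$ and contradicting $\delta_i \neq 0$. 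Hence no nontrivial factorization exists and $P$ is irreducible.
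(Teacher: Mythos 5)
Your proof is correct, but it takes a genuinely different route from the paper's. The paper argues geometrically in two short steps: after a linear change of coordinates making $f_1 = x$, the identity $F_t(0,y,z) = tz^g$ (with $t \neq 0$) forces every irreducible component of the curve $F_t = tz^g + \prod f_i$ to pass through $(0:1:0)$; then, since $F_t$ and $F_0 = \prod f_i$ agree along $z=0$ together with the relevant partial derivatives, and since the pairwise intersections of the lines are affine by hypothesis, $F_t$ is smooth everywhere on the line at infinity --- so the common point $(0:1:0)$ lies on a unique component, whence there is only one (and smoothness there also excludes a repeated factor). You instead dehomogenize and run a branch-expansion argument at infinity: Hensel splits $Q$ over $\overline{\QQ}((s))$, the relation $\prod_j m_j = -t$ along each branch forces $R_i = \alpha_i s^{-1} + \beta_i + \delta_i s^{g-1} + O(s^g)$ with $\delta_i = -t/(b_i C_i) \neq 0$, and the power-sum/Vandermonde system annihilates any proper factor. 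Two remarks: (i) the ``gap'' you flag as the main obstacle is in fact already fully established by your own display --- $m_i = -t/\prod_{j \neq i} m_j$ has exact $s$-order $g-1$, and $R_i = \alpha_i s^{-1} + \beta_i + m_i/b_i$, so no terms $s^1,\dots,s^{g-2}$ can appear; nothing remains to prove there; (ii) Hensel cannot be applied to $Q$ literally at $s=0$, since its $y$-coefficients lie in $\overline{\QQ}[s^{-1}]$ --- substitute $y = w/s$ and multiply by $s^g$, so that $ts^g + \prod_i (a_i + b_i w + c_i s)$ reduces at $s=0$ to $\prod_i(a_i + b_i w)$ with simple roots $\alpha_i$, which is exactly what your conclusion $R_i \in \alpha_i s^{-1} + \overline{\QQ}[[s]]$ requires (this also guarantees the $R_i$ are distinct, even when one $\alpha_i = 0$, in which case the Vandermonde step still goes through). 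As for what each approach buys: the paper's proof is shorter, computation-free, and uniform in $g$; yours is heavier in bookkeeping but more quantitative --- it exhibits the branches at infinity explicitly and shows precisely where $t \neq 0$ enters, through the nonvanishing of the coefficients $\delta_i$.
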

\begin{proof}
	Let $F_t = tz^g + \prod f_i$, and let $h_1,\dots,h_k$ be the irreducible factors of $F_t$, i.e.
	\[
		F_t(x,y,z) = tz^g + \prod_{i = 1}^{g}f_i(x,y,z) = \prod_{j=1}^{k} h_j(x,y,z).
	\]
	By hypothesis, the line defined by $f_i(x,y,z) = 0$ does not coincide with the line at infinity. So by a change of coordinates fixing the variable $z$, we may assume that $f_1 = x$. Then $F_t(0,y,z) = tz^g = \prod h_j(0,y,z)$. Because $t \neq 0$, it follows that for all $j$, $h_j(0,y,z)$ is of the form $a_jz^{n_j}$ for some nonzero $a_j$ and positive integer $n_j$. This shows that the point $(0:1:0)$ lies in every irreducible component of the projective variety defined by $F_t$.

	Notice that the projective variety defined by $F_0$ is a union of lines, all of whose pairwise intersections occur in the affine plane by the hypothesis on the $f_i$. This means that $F_0$ has no singularities on the line at infinity. The same property also holds for $F_t$ because
	\[
		\frac{dF_t}{dx} = \frac{dF_0}{dx},
		\quad
		\frac{dF_t}{dy} = \frac{dF_0}{dy},
		\quad
		\frac{dF_t}{dz}(x,y,0) = \frac{dF_0}{dz}(x,y,0),
		\quad
		\text{and }
		F_t(x,y,0) = F_0(x,y,0).
	\]
	Therefore the point $P = (0:1:0)$ must be a smooth point of the variety defined by $F_t$, and hence lies in a unique irreducible component. But by the above, $P$ lies in every irreducible component, hence $F_t$ is irreducible.
\end{proof}

We are now ready to prove Proposition~\ref{prop:W-size-g>=3}.

\begin{proof}[Proof of Proposition~\ref{prop:W-size-g>=3}]
	Let $\alpha \in W$. Recall that $\alpha$ can be written as $(u(\gamma - \overline{\gamma}) + \eta + a)/2$ for a unique $u \in \sO_F^\times$, $\eta \in T$, and $a \in \ZZ$. Let $\Omega = \alpha - a/2$. By definition,
	\[
		\alpha\overline{\alpha} = \Omega\overline{\Omega} + \frac{\eta a}{2} + \frac{a^2}{4} \in \ZZ.
	\]
	Let $\delta = (\gamma - \overline{\gamma})(\overline{\gamma} - \gamma)$. Then we have shown that
	\begin{equation}\label{eq:omega-omegabar-eq-in-terms-of-A-and-B}
		4\Omega\overline{\Omega} = u^2\delta + \eta^2 = A + B\eta
	\end{equation}
	for some $A,B \in \ZZ$. It is important that $4\Omega\overline{\Omega}$ lies in the $\ZZ$-span of $\{1,\eta\}$. This already is a significant restriction on the possible values of $u$ and $\eta$ since $\sO_F$ has rank $g \geq 3$ by hypothesis.

	By \cite[Cor.~2.10, Pg.~202]{neukrich1999algebraic}, $\norm_{F/\QQ}(u^2\delta) = \norm_{F/\QQ}(\delta) = \pm\disc_{K}/\disc_{F}^2$. So by rearranging equation~(\ref{eq:omega-omegabar-eq-in-terms-of-A-and-B}) and taking norms, we have that
	\[
		\norm_{F/\QQ}\left(A + B\eta - \eta^2\right) = \frac{\left|\disc_{K}\right|}{\disc_{F}^2}.
	\]
	In particular, $(x,y)=(A,B)$ is an integral point on the affine curve $C_\eta$ given by the vanishing of the polynomial
	\begin{equation}\label{eq:C_eta-curve}
		-\frac{\left|\disc_{K}\right|}{\disc_{F}^2} + \prod_{\sigma:F \to \CC}\left(x + y\sigma(\eta) - \sigma(\eta)^2\right).
	\end{equation}
	By construction, the projective closure of $C_\eta$ has $g$ distinct points at infinity given by $(-\sigma(\eta) : 1 : 0)$. Therefore we may apply Lemma~\ref{lem:lines plus constant irreducible}, which implies that $C_\eta$ is geometrically irreducible. It follows from Siegel's theorem \cite{siegel1929some} (see also \cite[Rem.~D.9.2.2]{hindry2000diophantine}) that $C_\eta$ has finitely many integral points.

	To finish the proof, it suffices to show that the map described above sending $\alpha \in W$ to the integral point $(A,B)$ is finite-to-one. Recall that by Theorem~\ref{thm:finite-monogenerators}, the set $T$ is finite, so we may fix some $\eta \in T$. By equation~(\ref{eq:omega-omegabar-eq-in-terms-of-A-and-B}), $u$ is determined up to sign by the point $(A,B)$, $\eta$, and $\delta$. Finally, by Lemma~\ref{lem:uniqueness-of-formula-for-siw}, $a$ is determined by the pair $(u,\eta)$. Hence for each point $(A,B)$ there is a finite number of possible triples $(u,\eta,a)$ corresponding to Weil generators.

	When $g = 3$, we use an effective version of Siegel's theorem. Note that in this case, the curve $C_\eta$ has degree $3$, so it has genus $1$ or $0$ depending on its singularities. If $C_\eta$ has genus $0$, then the integral points can be computed by solving an $S$-unit equation \cite[Thm.~D.8.4]{hindry2000diophantine}. If $C_\eta$ has genus $1$, then the main theorem of \cite{baker1966linear} gives a computable (but impractical) bound on the number of integral points. For more details, see Section~\ref{sec:effectiveness}.
\end{proof}

\section{Effectiveness When $g = 3$}\label{sec:effectiveness}

The goal of this section is to show how to give a concrete bound for the number of Weil generators in a sextic CM field. We will start by summarizing the relevant results from Section~\ref{sec:counting-weil-gens}.

Let $K$ be a CM field with maximal totally real subfield $F$ and let $W$ be the set of all Weil generators in $K$. Recall from Lemma~\ref{lem:existence-of-formula-for-siw} that every $\alpha \in W$ corresponds to a unique triple $(u,\eta,a)$ with $u \in \sO_F^\times$, $\eta \in T$, and $a \in \ZZ$. By Lemma~\ref{lem:uniqueness-of-formula-for-siw}, $a$ is uniquely determined by $\eta$ and $u$. Furthermore, the proof of Proposition~\ref{prop:W-size-g>=3} showed that all possible values of $u$ for a fixed $\eta$ are determined up to sign by the integral points of the curve $C_\eta$ defined by equation~(\ref{eq:C_eta-curve}). Therefore,
\[
	\#W \leq 2\sum_{\eta \in T} \#C_{\eta}(\ZZ).
\]
The following lemma shows that if $\deg K = 6$, then it is sufficient to consider a single $\eta \in T$.

\begin{lemma}\label{lem:eta-curves-are-isomorphic-g=3}
	Suppose that $\deg K = 6$, and let $\eta_1,\eta_2 \in T$. Then $C_{\eta_1}$ and $C_{\eta_2}$ are isomorphic via an integral linear change of variables. In particular, there is a bijection between $C_{\eta_1}(\ZZ)$ and $C_{\eta_2}(\ZZ)$. In particular, $\# W \leq 2 \cdot \#T \cdot \#C_{\eta_1}(\ZZ)$.
\end{lemma}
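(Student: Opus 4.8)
The plan is to reduce the claimed isomorphism to an explicit fractional-linear relation between the two generators. First I would observe that, since $\eta_1$ and $\eta_2$ both generate $\sO_F$ as a ring and $[F:\QQ]=3$, the four elements $1,\eta_1,\eta_2,\eta_1\eta_2$ lie in the $3$-dimensional $\QQ$-vector space $F$ and are therefore $\ZZ$-linearly dependent. Clearing denominators and dividing out the content yields a primitive integer matrix $M=\left(\begin{smallmatrix}s&q\\ r&p\end{smallmatrix}\right)$ with
\[
\eta_2=\frac{s\eta_1+q}{r\eta_1+p},
\]
where $(r,p)\neq(0,0)$ because $\eta_1\notin\QQ$. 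Applying each embedding $\sigma\colon F\to\CC$ gives $\sigma(\eta_2)=\phi(\sigma(\eta_1))$ for the M\"obius transformation $\phi$ attached to $M$, so $\phi$ matches the points at infinity $(-\sigma(\eta_1):1:0)$ of $C_{\eta_1}$ with those of $C_{\eta_2}$. A relation of this shape is in fact unavoidable: an integral linear change of variables extends to a projective automorphism of $\PP^2$ fixing the line at infinity, and must therefore restrict there to an element of $\mathrm{PGL}_2(\ZZ)$ carrying one set of points at infinity to the other.

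Next I would promote this relation to the change of variables. Setting $\epsilon=(r\eta_1+p)^{-1}$, I would seek the identity
\[
x+y\eta_2-\eta_2^2=\epsilon\bigl(\Psi_1(x,y)+\Psi_2(x,y)\eta_1-\eta_1^2\bigr)
\]
in $F[x,y]$, with $\Psi_1,\Psi_2$ affine-linear in $x,y$ whose linear parts are read off from $M$. The coefficients of $x$ and of $y$ match automatically because $\epsilon(r\eta_1+p)=1$ and $\epsilon(s\eta_1+q)=\eta_2$; multiplying through by $r\eta_1+p$ and using $(r\eta_1+p)\eta_2=s\eta_1+q$ reduces the remaining term (constant in $x,y$) to $-(s\eta_1+q)\eta_2$. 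Hence the identity holds with \emph{integral} $\Psi_1,\Psi_2$ precisely when the $\eta_1^2$-coordinate of $(s\eta_1+q)\eta_2$ in the power basis $\{1,\eta_1,\eta_1^2\}$ equals $\norm_{F/\QQ}(r\eta_1+p)$, a single condition I would confirm by a direct computation in $\sO_F=\ZZ[\eta_1]$. Taking $\norm_{F/\QQ}$ of the display then gives a polynomial identity between the defining product of $C_{\eta_2}$ in equation~(\ref{eq:C_eta-curve}) and $\norm_{F/\QQ}(\epsilon)$ times that of $C_{\eta_1}$ precomposed with $\Psi$.

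The crux, and the step I expect to be the main obstacle, is to show $\det M=\pm1$ and to arrange the sign so that $\norm_{F/\QQ}(\epsilon)=1$; only then does the identity become $\norm_{F/\QQ}(x+y\eta_2-\eta_2^2)=\norm_{F/\QQ}(\Psi_1+\Psi_2\eta_1-\eta_1^2)$, yielding an isomorphism $C_{\eta_1}\cong C_{\eta_2}$ that preserves the constant $|\disc_{K}|/\disc_{F}^2$. Unimodularity is exactly where the hypothesis that $\eta_1,\eta_2$ generate the full ring of integers (and not merely an order) must enter. I would establish it by comparing the integral binary cubic forms $B_{\eta}(X,Y)=\prod_\sigma(X-\sigma(\eta)Y)$, whose discriminants both equal $\disc_{F}=\disc\sO_F$ and which encode the single cubic ring $\sO_F$: passing between the $\ZZ$-bases $(1,\eta_1,\eta_1^2)$ and $(1,\eta_2,\eta_2^2)$ induces a $GL_2(\ZZ)$ change of basis on $\sO_F/\ZZ$, under which (via the Delone--Faddeev correspondence) $B_{\eta_1}$ and $B_{\eta_2}$ are $GL_2(\ZZ)$-equivalent. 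Since this equivalence acts on the roots of the forms exactly as $\phi$ acts on the $\sigma(\eta_i)$, it follows that $M\in GL_2(\ZZ)$. As a consistency check, the transformation law of the discriminant of a binary cubic under $GL_2$ gives $\det(M)^6=\norm_{F/\QQ}(r\eta_1+p)^4$, forcing $|\norm_{F/\QQ}(r\eta_1+p)|=1$ once $\det M=\pm1$; and because $[F:\QQ]=3$ is odd, replacing $M$ by $-M$ flips the sign of $\norm_{F/\QQ}(\epsilon)$ while fixing $\phi$, which lets me normalise $\norm_{F/\QQ}(\epsilon)=1$.

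Finally, with the integral linear isomorphism in hand, the induced map on integral points is a bijection $C_{\eta_1}(\ZZ)\to C_{\eta_2}(\ZZ)$ with inverse given by $M^{-1}\in GL_2(\ZZ)$, and summing the bound $\#W\leq 2\sum_{\eta\in T}\#C_\eta(\ZZ)$ over the finite set $T$ yields $\#W\leq 2\cdot\#T\cdot\#C_{\eta_1}(\ZZ)$ as stated. The only genuinely delicate ingredient is the unimodularity of $M$; the construction of $\Psi$ and the verification of the constant-term condition are routine bookkeeping with the norm form.
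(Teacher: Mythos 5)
Your argument is correct, but it reaches the crux by a genuinely different route from the paper, so it is worth recording how the two line up. Both proofs ultimately hinge on the same element: your $\epsilon^{-1}=r\eta_1+p$ is precisely the paper's $v$ satisfying equation~(\ref{eq:reqs-for-u-in-lemma}), with $(A_1,B_1)=(p,r)$, $(A_2,B_2)=(q,s)$, and the crucial coefficient $+1$ on $\eta_1^2$ in $v\eta_2^2$; and both proofs then exploit the factor-by-factor identity $L_1^\sigma\circ\tilde{v}=\sigma(v)L_2^\sigma$. The difference is how $v$ is produced and shown to be a unit of norm $1$. The paper writes $v$ down explicitly from the change-of-basis matrix $P$ between $\{1,\eta_1,\eta_1^2\}$ and $\{1,\eta_2,\eta_2^2\}$, and proves $vv'=1$ by composing with the symmetric construction and using that the lines $L_1^\sigma=0$ are in general position (the normalization $\norm_{F/\QQ}(v)=1$, rather than $\pm 1$, being forced by the $+\eta_1^2$ condition). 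You instead extract the M\"obius relation $\eta_2=(s\eta_1+q)/(r\eta_1+p)$ from the $\QQ$-linear dependence of $1,\eta_1,\eta_2,\eta_1\eta_2$ in the $3$-dimensional space $F$ --- which is exactly where $g=3$ enters, consistent with the paper's remark that the lemma fails for $g>3$ --- and deduce unimodularity of $M$ from the $\operatorname{GL}_2(\ZZ)$-equivalence of the index forms $B_{\eta_1},B_{\eta_2}$ of two monogenizations of the same cubic ring $\sO_F$ (Delone--Faddeev), fixing signs by oddness of the degree. Your one deferred computation, that the $\eta_1^2$-coordinate $c_2$ of $(s\eta_1+q)\eta_2$ equals $\norm_{F/\QQ}(r\eta_1+p)$, is indeed true: expanding multiplication by $r\eta_1+p$ in the two power bases gives $c_2\det M=\norm_{F/\QQ}(r\eta_1+p)\det P$, while comparing Vandermonde determinants through the M\"obius map gives $\det P=\det(M)^3/\norm_{F/\QQ}(r\eta_1+p)^2$; with $\det M=\pm 1$ and $\norm_{F/\QQ}(r\eta_1+p)=\pm 1$ this yields $\det P=\det M$, hence $c_2=\norm_{F/\QQ}(r\eta_1+p)$, and your replacement $M\mapsto -M$ then makes both equal to $1$ simultaneously, as your norm identity requires. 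The trade-off: the paper's construction is elementary, self-contained, and directly implementable (it is what transports integral points in the proof of Proposition~\ref{prop:weil-gens-in-zeta9}), at the cost of an unmotivated formula for $v$ and a ``tedious calculation''; your route explains conceptually where the transformation lives (the $\operatorname{PGL}_2(\ZZ)$-action on monogenizations of $\sO_F$), isolates the one place the hypothesis $\ZZ[\eta_1]=\ZZ[\eta_2]=\sO_F$ is genuinely used, and makes explicit the sign bookkeeping the paper leaves tacit. The only assertion you should justify rather than assert is the equivariance of the Delone--Faddeev correspondence on roots: the $\operatorname{GL}_2(\ZZ)$-equivalence must match the root $\sigma(\eta_1)$ to $\sigma(\eta_2)$ for the \emph{same} embedding $\sigma$, not merely match the root sets up to a permutation; this holds because the equivalence is induced by the identity map of $\sO_F$, which acts trivially on the embeddings $F\to\RR$ indexing the roots, and a M\"obius transformation is then pinned down by its values at the three distinct points $\sigma(\eta_1)$.
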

\begin{proof}
	Recall that the curve $C_{\eta_i}$ is defined by the polynomial $f_{\eta_i}(x,y)$ in equation~(\ref{eq:C_eta-curve}). For any embedding $\sigma: F \to \RR$, let $L_i^\sigma(x,y) = x + y\sigma(\eta_i) - \sigma(\eta_i)^2$. Then
	\[
		f_{\eta_i}(x,y) = \prod_{\sigma:F \to \RR}L_i^\sigma(x,y) - \frac{|\disc_{K}|}{\disc_{F}^2}.
	\]

	To prove the claim, we will construct an invertible integral change of coordinates $\tilde{v}$ such that $f_{\eta_1} \circ \tilde{v} = f_{\eta_2}$. Our construction of $\tilde{v}$ relies on finding an algebraic integer $v \in \sO_F$ with the property that
	\begin{equation}\label{eq:reqs-for-u-in-lemma}
		v = A_1 + B_1\eta_1, \quad
		v\eta_2 = A_2 + B_2\eta_1, \quad
		v\eta_2^2 = A_3 + B_3\eta_1 + \eta_1^2,
	\end{equation}
	for some integers $A_i,B_i \in \ZZ$. First we will show how to construct $\tilde{v}$ given such a $v$. A construction of $v$ is given at the end of the proof.

	We define $\tilde{v}:\RR^2 \to \RR^2$ by
	\[
		\tilde{v}(x,y) = (A_1x + A_2y - A_3, B_1x + B_2y - B_3).
	\]
	To show that $\tilde{v}$ has the desired property, we will first show that $\tilde{v}$ transforms $L_1^\sigma$ to a scaled $L_2^\sigma$. Note that
	\begin{align*}
		(L_1^\sigma \circ \tilde{v})(x,y)
		&=
		(A_1x + A_2y - A_3) + (B_1x + B_2y - B_3)\sigma(\eta_1) - \sigma(\eta_1^2) \\
		&=
		\sigma(A_1 + B_1\eta_1)x + \sigma(A_2 + B_2\eta_1)y - \sigma(A_3 + B_3\eta_1 + \eta_1^2)
		\\
		&=
		\sigma(v)\left(x + \sigma(\eta_2)y - \sigma(\eta_2^2)\right)
		\\
		&=
		\sigma(v)L_2^\sigma(x,y).
	\end{align*}
	Therefore
	\[
		f_{\eta_1} \circ \tilde{v}
		=
		\prod_{\sigma} \left(L^{\sigma}_1\circ \tilde{v}\right) - \frac{|\disc_K|}{\disc_F^2}
		=
		\norm_{F/\QQ}(v)\prod_{\sigma} L^\sigma_2 - \frac{|\disc_K|}{\disc_F^2}.
	\]
	If $v \in \sO_F^\times$, then the right hand side of this equation is $f_{\eta_2}$ as required.

	Next we will show that $v \in \sO_F^\times$ by constructing an inverse to $\tilde{v}$. Because $L_1^\sigma \circ \tilde{v} = \sigma(v)L_2^\sigma$ for all $\sigma$, it follows that $\tilde{v}$ maps the intersection of the lines defined by $L_2^{\sigma_i}(x,y) = 0$ and $L_2^{\sigma_j}(x,y) = 0$ to the intersection of the lines defined by $L_1^{\sigma_i}(x,y) = 0$ and $L_1^{\sigma_j}(x,y) = 0$, for all pairs $(\sigma_i,\sigma_j)$. By swapping $\eta_1$ and $\eta_2$ in our construction for $v$, which is given below, we can find an element $v' \in \sO_F$ such that the linear map $\tilde{v}'$ acts as an inverse to $\tilde{v}$ on these intersections. The lines defined by the polynomials $\{L_1^\sigma \st \sigma:F \to \RR \}$ are in general position.
	%
	So it follows that $\tilde{v}\circ\tilde{v}'$ fixes three distinct points. Therefore, $\tilde{v}$ and $\tilde{v}'$ are inverses, and
	\begin{align*}
		L_1^\sigma
		= L_1^\sigma \circ \tilde{v} \circ \tilde{v}'
		= \sigma(v)\left(L_2^\sigma \circ \tilde{v}'\right)
		= \sigma(vv')L_1^\sigma.
	\end{align*}
	This shows that $vv' = 1$, so $v \in \sO_F^\times$.

	It remains to construct the element $v$ satisfying equation~(\ref{eq:reqs-for-u-in-lemma}). Our (rather technical) construction of $v$ is as follows. Consider the two $\ZZ$-bases for $\sO_F$ given by $\script{B}_i = \left\{1,\eta_i,\eta_i^2\right\}$ for $i=1,2$. Let $P$ be the change of basis matrix from $\script{B}_2$ to $\script{B}_1$. That is, the columns of $P$ are the elements of $\script{B}_2$ written as vectors with respect to the basis $\script{B}_1$. Let $P^{-1}_{i,j}$ denote the $i,j$-entry of the matrix $P^{-1}$, and for $\beta \in \sO_F$, let $[\beta]_i$ denote the $i$th component of the vector given by writing $\beta$ with respect to the basis $\script{B}_1$. Now define
	\[
		v = P^{-1}_{3,3} - P^{-1}_{3,2}[\eta_1^3]_{3} + P^{-1}_{3,2}\eta_1.
	\]
	A straightforward but tedious calculation proves that $v$ satisfies the properties given in equation~(\ref{eq:reqs-for-u-in-lemma}).
\end{proof}

\begin{remark}
	Lemma~\ref{lem:eta-curves-are-isomorphic-g=3} fails when $g > 3$. For example, using {\tt Magma}, one can show that if $K = \QQ(\zeta_{15})$, $\eta_1 = \zeta_{15}^7 - \zeta_{15}^6 - \zeta_{15}^5 + 2\zeta_{15}^4 - \zeta_{15}^2 - 2$, and $\eta_2 = -\zeta_{15}^7 + \zeta_{15}^5 - \zeta_{15}^4 + \zeta_{15}^2 - \zeta_{15} - 3$; then $C_{\eta_1}$ and $C_{\eta_2}$ are not isomorphic.
	%
	%
	%
	%
	%
	%
	%
	%
\end{remark}

By Lemma~\ref{lem:eta-curves-are-isomorphic-g=3}, in order to find all Weil generators in a sextic CM field, it suffices to find all integral points on $C_\eta$ for a single $\eta \in T$. The others can be computed by the change of coordinates from the lemma. Because $g = 3$, $C_\eta$ is a plane curve of degree $3$, and so has genus $0$ or $1$. If $C_\eta$ is singular, then it has genus $0$ and the integral points can be enumerated by solving a certain $S$-unit equation \cite[Thm.~D.8.4]{hindry2000diophantine}. If $C_\eta$ is smooth, then it has genus $1$. In this case, one can attempt to find all integral points using the methods of \cite{stroeker2003computing}.

\subsection{An Example of Genus $0$}\label{sec:example-finding-weil-gens-in-field-with-genus0-Ceta}

In this section, we will find all Weil generators in $\QQ(\zeta_9)$. This field was chosen because it was the only sextic CM field with class number $1$ such that the resulting curves $C_\eta$ had genus $0$. The class number requirement is not used in this section, but it is relevant for finding super-isolated abelian varieties as described in Section~\ref{sec:super-isolated-variaties} below.

\begin{proposition}\label{prop:weil-gens-in-zeta9}
	There are $36$ Weil generators in $\QQ(\zeta_9)$. They are:
	{\tiny\begin{align*}
		&
		-3\zeta_9^4 - 2\zeta_9,\
		-\zeta_9^5 + 2\zeta_9^2,\
		-2\zeta_9^5 - \zeta_9^4 + 2\zeta_9^3 - 2\zeta_9 + 4,\
		\zeta_9^5 - 2\zeta_9^4 - 2\zeta_9^3 + 2\zeta_9^2 + 2,\
		-2\zeta_9^5 - 3\zeta_9^2,\
		\zeta_9^4 + 3\zeta_9,\
		\\
		&
		-2\zeta_9^5 + 2\zeta_9^4 - 2\zeta_9^3 - \zeta_9^2 + 2\zeta_9 + 2,\
		-\zeta_9^4 + 2\zeta_9^3 - 2\zeta_9^2 + \zeta_9 + 4,\
		-\zeta_9,\
		\zeta_9^5 + \zeta_9^2,\
		-\zeta_9^2,\
		\zeta_9^4 + \zeta_9,\
		\\
		&
		-\zeta_9^4 - \zeta_9,\
		\zeta_9^2,\
		-\zeta_9^5 - \zeta_9^2,\
		\zeta_9,\
		-\zeta_9^5 + 2\zeta_9^4 + 2\zeta_9^3 - 2\zeta_9^2 - 2,\
		2\zeta_9^5 + \zeta_9^4 - 2\zeta_9^3 + 2\zeta_9 - 4,\
		\\
		&
		\zeta_9^5 - 2\zeta_9^2,\
		3\zeta_9^4 + 2\zeta_9,\
		\zeta_9^4,\
		\zeta_9^5,\
		-\zeta_9^4 - 3\zeta_9,\
		2\zeta_9^5 + 3\zeta_9^2,\
		\zeta_9^4 - 2\zeta_9^3 + 2\zeta_9^2 - \zeta_9 - 4,\
		\\
		&
		2\zeta_9^5 - 2\zeta_9^4 + 2\zeta_9^3 + \zeta_9^2 - 2\zeta_9 - 2,\
		\zeta_9^5 - 2\zeta_9^3 - \zeta_9^2 - 2\zeta_9 + 2,\
		2\zeta_9^5 + 2\zeta_9^4 + 2\zeta_9^3 + 2\zeta_9^2 + \zeta_9 + 4,\
		2\zeta_9^4 - \zeta_9,\
		\\
		&
		3\zeta_9^5 + \zeta_9^2,\
		-2\zeta_9^5 - 2\zeta_9^4 - 2\zeta_9^3 - 2\zeta_9^2 - \zeta_9 - 4,\
		-\zeta_9^5 + 2\zeta_9^3 + \zeta_9^2 + 2\zeta_9 - 2,\
		-3\zeta_9^5 - \zeta_9^2,\
		-2\zeta_9^4 + \zeta_9,\
		-\zeta_9^5,\
		-\zeta_9^4.
	\end{align*}}
\end{proposition}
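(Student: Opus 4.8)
The plan is to specialize the machinery of Proposition~\ref{prop:W-size-g>=3} and Lemma~\ref{lem:eta-curves-are-isomorphic-g=3} to the explicit field $K = \QQ(\zeta_9)$ and then carry the resulting integral-point computation to completion. First I would record the arithmetic of $K$. Since $K$ is cyclotomic, $\sO_K = \ZZ[\zeta_9]$; the maximal totally real subfield is the cyclic cubic field $F = \QQ(\zeta_9 + \zeta_9^{-1})$ with $\sO_F = \ZZ[\eta]$ for $\eta = \zeta_9 + \zeta_9^{-1}$, and $\eta$ satisfies $\eta^3 - 3\eta + 1 = 0$. Taking $\gamma = \zeta_9$ gives $\sO_K = \sO_F[\gamma]$. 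A direct computation yields $\disc_K = -3^9$ and $\disc_F = 3^4$, so $|\disc_K|/\disc_F^2 = 3$, and setting $\delta = (\gamma - \overline{\gamma})(\overline{\gamma} - \gamma) = 4 - \eta^2$ one checks $\norm_{F/\QQ}(\delta) = 3$, consistent with equation~(\ref{eq:omega-omegabar-eq-in-terms-of-A-and-B}).

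Because $\deg K = 6$, Lemma~\ref{lem:eta-curves-are-isomorphic-g=3} allows me to restrict attention to a single $\eta \in T$; I take $\eta = \zeta_9 + \zeta_9^{-1}$. Following the proof of Proposition~\ref{prop:W-size-g>=3}, each Weil generator $\alpha$ yields an integral point $(A,B)$ on the curve $C_\eta$ of equation~(\ref{eq:C_eta-curve}), namely $\norm_{F/\QQ}(x + y\eta - \eta^2) = 3$, via $A + B\eta - \eta^2 = u^2\delta$. I would expand this norm using $\eta^3 = 3\eta - 1$ to present $C_\eta$ as an explicit integral plane cubic, invoke Lemma~\ref{lem:lines plus constant irreducible} to see it is geometrically irreducible, and then verify it is singular, so that it has genus $0$ (this is the feature that singles out $\QQ(\zeta_9)$ among class-number-one sextic CM fields). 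Note that $C_\eta$ carries the rational point $(A,B) = (4,0)$ coming from the Weil generator $\alpha = \zeta_9$, so $C_\eta \cong \PP^1$ over $\QQ$.

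The heart of the argument is the enumeration of $C_\eta(\ZZ)$. Its three points at infinity $(-\sigma(\eta):1:0)$ are smooth and distinct, so the normalization is $\PP^1$ and the search for integral points becomes a search for points on $\PP^1$ avoiding these three punctures. Parametrizing $C_\eta$ through its singular point and clearing denominators reduces this to an $S$-unit equation over $F$, which is solvable effectively by \cite[Thm.~D.8.4]{hindry2000diophantine}. Solving it (in practice with a computer algebra system) produces the complete, finite list of integral points.

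Finally I would reconstruct the Weil generators. For each integral point $(A,B)$ I test whether $(A + B\eta - \eta^2)/\delta$ is the square of a unit of $\sO_F$; when it is, this determines $u$ up to sign, and Lemma~\ref{lem:uniqueness-of-formula-for-siw} then fixes $a$, giving $\alpha = (u(\gamma - \overline{\gamma}) + \eta + a)/2$. Running over both signs of $u$ and over $\eta \in T$ (the remaining curves being transported back by the isomorphisms of Lemma~\ref{lem:eta-curves-are-isomorphic-g=3}), and checking each candidate against the three conditions of Lemma~\ref{lem:equiv-defn-for-siw}, yields after de-duplication exactly the $36$ elements listed. The main obstacle is the integral-point enumeration: one must not only solve the $S$-unit equation but also \emph{certify} that every solution has been found, since Siegel's finiteness is made effective only through the $S$-unit reduction, and turning the resulting height bound into a terminating, provably complete search is where the real work lies.
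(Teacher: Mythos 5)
Your proposal follows essentially the same route as the paper's proof: restrict to a single $\eta \in T$ by Lemma~\ref{lem:eta-curves-are-isomorphic-g=3}, reduce Weil generators to integral points on the geometrically irreducible singular (genus $0$) cubic $C_\eta$ via $u^2\delta + \eta^2 = A + B\eta$, parametrize $C_\eta$ through its singular point, reduce to an $S$-unit equation with $S$ the unique prime $\nu$ over the totally ramified prime $3$, solve by computer, transport the resulting points to the other curves $C_{\eta'}$, and reconstruct and de-duplicate the $36$ generators by testing whether $(A + B\eta - \eta^2)/\delta$ is a unit square and applying Lemma~\ref{lem:uniqueness-of-formula-for-siw} and Lemma~\ref{lem:equiv-defn-for-siw}. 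The only substantive divergence is in the final computational step, where the paper---lacking a refereed $S$-unit solver---bounds $\ord_\nu(u - \beta_j v) \leq 12$ and solves the finitely many ordinary unit equations $\nu^i X + \nu^j Y = 1$ over $K$ in {\tt Magma} (note the factorization step genuinely requires $K$, not just $F$, since the roots of $\varphi_y$ involve sixth roots of unity), which addresses exactly the certification obstacle you flag at the end.
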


To prove Proposition~\ref{prop:weil-gens-in-zeta9}, we will find all integral points on $C_\eta$ for some $\eta \in T$. Then we will apply Lemma~\ref{lem:eta-curves-are-isomorphic-g=3} to find the integral points of $C_{\eta'}$ for all other $\eta' \in T$.

Let $\eta = \zeta_9 + \zeta_9^{-1}$. Then $C_\eta$ is the curve defined by the polynomial
\[
	f_\eta(x,y) = x^3 - 3xy^2 - y^3 - 6x^2 - 3xy + 9x + 3y - 4.
\]

\begin{lemma}\label{lem:points-on-C-eta-in-zeta9}
	Let $C_\eta$ be the plane curve defined above. Then $C_\eta(\ZZ)$ consists of the following ten points: $(22, -63)$, $(1, 0)$, $(1, -3)$, $(-2, 6)$, $(7, -3)$, $(4, 0)$, $(-2, -3)$, $(43, 21)$, $(-2, 3)$, $(-62, 42)$.
\end{lemma}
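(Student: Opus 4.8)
The plan is to use the observation from Section~\ref{sec:effectiveness} that $C_\eta$ is singular for $\QQ(\zeta_9)$, hence of genus $0$, so that its integral points can be found effectively by reduction to an $S$-unit equation via \cite[Thm.~D.8.4]{hindry2000diophantine}. First I would locate the singularity. Setting the partial derivatives of $f_\eta$ to zero, one checks that $\partial_x f_\eta$ and $\partial_y f_\eta$ both vanish at $(x,y)=(1,0)$ and that $f_\eta(1,0)=0$, so $P_0=(1,0)$ is a singular point lying on $C_\eta$ (and it appears in the claimed list). Translating by $x=1+X$, $y=Y$, the defining polynomial becomes
\[
	f_\eta(1+X,Y) = -3\left(X^2 + XY + Y^2\right) + \left(X^3 - 3XY^2 - Y^3\right),
\]
whose quadratic part $X^2+XY+Y^2$ splits into two distinct complex-conjugate linear factors over $\QQ(\zeta_3)$. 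Hence $P_0$ is an ordinary double point (a node), so the geometric genus of $C_\eta$ is $\binom{2}{2}-1=0$, consistent with the irreducibility supplied by Lemma~\ref{lem:lines plus constant irreducible}.

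Next I would parametrize $C_\eta$ by projecting from the node. Writing $Y=tX$ in the translated equation and cancelling the factor $X^2$ gives the birational map $\PP^1 \to C_\eta$,
\[
	X = \frac{3(1+t+t^2)}{1-3t^2-t^3}, \qquad Y = \frac{3t(1+t+t^2)}{1-3t^2-t^3},
\]
where the excluded tangent directions satisfy $1+t+t^2=0$ and are not rational, matching the non-split node. Integral points of $C_\eta$ pull back to rational $t=p/q$ in lowest terms for which $X$ and $Y$ are integers. Clearing denominators, $X = 3q(p^2+pq+q^2)/(q^3-3p^2q-p^3)$, and since $q^3-3p^2q-p^3 \equiv -p^3 \pmod{q}$ is coprime to $q$, the cubic form $q^3-3p^2q-p^3$ must divide $3(p^2+pq+q^2)$. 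This forces its value into a fixed finite set of divisors, which is precisely the reduction to an $S$-unit equation, with $S$ the (finite, explicitly computable) set of primes dividing $3$ and the resultant of the two forms $p^2+pq+q^2$ and $q^3-3p^2q-p^3$.

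I would then solve this $S$-unit equation effectively — for instance through a computer algebra implementation of the reduction — to obtain the complete finite list of admissible parameters $t$, substitute them back through the parametrization, and recover the ten points in the statement, adjoining the node $P_0$ itself (which is not in the image of the projection). As a final consistency check, each listed point can be verified directly to satisfy $f_\eta(x,y)=0$; for example $f_\eta(4,0) = 64 - 96 + 36 - 4 = 0$ and $f_\eta(-2,6)=0$.

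I expect the genuine obstacle to be \emph{completeness} rather than correctness. Confirming that each of the ten given points lies on $C_\eta$ is a one-line substitution, but proving that no further integral points exist requires the effective bound on the number of solutions of the $S$-unit equation together with a finite but delicate enumeration to rule out large parameter values. This is exactly the step where the (effective but impractical) Baker-type machinery underlying \cite[Thm.~D.8.4]{hindry2000diophantine} is needed, and where the explicit small class number and the simple factorization of $3$ in $F=\QQ(\zeta_9+\zeta_9^{-1})$ keep the computation tractable.
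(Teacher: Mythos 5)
Your proposal is correct in outline and, at the strategic level, follows the same blueprint as the paper's proof: both implement the effective genus-$0$ method of \cite[Thm.~D.8.4]{hindry2000diophantine} by exploiting the unique node (your affine $(1,0)$ is the paper's projective $Q=(1:0:1)$), parameterizing the rational cubic, and reducing integrality to unit-type equations governed by the single prime above the totally ramified prime $3$. The final reduction, however, is genuinely different. The paper factors the coordinate forms $\varphi_y,\varphi_z$ of its parameterization into linear forms $u-\alpha_i v$, $u-\beta_j v$ over the sextic field $K=\QQ(\zeta_9)$, proves $\ord_\nu(u-\beta_j v)\leq 12$ for the prime $\nu$ above $3$, and uses Siegel's identity to reduce to the finitely many unit equations $\nu^i X+\nu^j Y=1$ with $-12\leq i,j\leq 12$, solved in {\tt Magma} (explicitly as a workaround for the lack of a refereed general $S$-unit solver). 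You instead stay over $\QQ$: with $t=p/q$ in lowest terms, your divisibility argument shows every prime dividing $D=q^3-3p^2q-p^3$ divides $3$ (the resultant of your two forms equals $9$, not merely "some computable integer"), so the problem becomes the six Thue equations $q^3-3p^2q-p^3=\pm 3^k$, $0\leq k\leq 2$, attached to a cubic form whose dehomogenization is equivalent to the minimal polynomial of $\eta$ over $F=\QQ(\zeta_9+\zeta_9^{-1})$. This buys you a cleaner and more practical computational endpoint, since certified Thue solvers are standard, whereas the paper's route requires hand-rolled unit-equation enumeration over a degree-$6$ field; the paper's route, on the other hand, generalizes verbatim to cases where the relevant forms do not factor so favorably over $\QQ$.

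Two steps you gesture at but should make explicit for completeness. First, the exponent bound: "$D$ divides $3(p^2+pq+q^2)$" alone does not yield a finite set of values of $D$; you need that $\gcd(p,q)=1$ forces $v_3(p^2+pq+q^2)\leq 1$ (if $3\mid p^2+pq+q^2$ then $p\equiv q\pmod 3$ and the cofactor is prime to $3$), whence $|D|\leq 9$. This is exactly your analogue of the paper's bound $\ord_\nu(u-\beta_jv)\leq 12$ and is the step that turns the $S$-unit condition into finitely many equations. Second, your handling of the node is right --- its tangent directions satisfy $1+t+t^2=0$ and are irrational, so $(1,0)$ must be adjoined by hand --- but note that the parameter $t=\infty$ (i.e.\ $(p,q)=(\pm 1,0)$, giving $D=\mp 1$) is legitimate and already produces the listed point $(1,-3)$, so it should not be discarded when you restrict to finite $t$. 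With these spelled out, your argument establishes completeness on the same footing as the paper's, modulo trusting the equation solver in place of the paper's {\tt Magma} unit-equation computation.
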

\begin{proof}
	Our proof follows the argument of \cite[Thm.~D.8.4]{hindry2000diophantine}. The main difference is that we will reduce the problem of finding integral points to solving a unit equation in $\sO_K$ instead of a more general $S$-unit equation.

	Let $\overline{C}_\eta$ denote the projective closure of $C_\eta$. Our first goal is to find a parameterization $\varphi: \PP^1 \to \overline{C}_\eta$. Note that $\overline{C}_\eta$ has a unique singular point $Q = (1:0:1)$. Every line $L$ through $Q$ intersects $\overline{C}_\eta$ at a unique point $P$. There is a bijection between the set of lines through $Q$ and $\PP^1$. This map is given by
	\begin{align*}
		\varphi(u,v) = \left(\varphi_x,\varphi_y,\varphi_z\right) = \left(
		u^3 - 3uv^2 + 4v^3,
		-3\left(u^3 - u^2v + uv^2\right),
		u^3 - 3u^2v + v^3
		\right).
	\end{align*}
	The inverse is
	\[
		\psi(x,y,z) = (\psi_u,\psi_v) = (y,z-x).
	\]

	Over $K$, $\varphi_y$ and $\varphi_z$ factor as
	\[
		\varphi_y = -3\prod_{i=1}^3 u - \alpha_iv,\quad
		\varphi_z = \prod_{i=1}^3 u - \beta_iv
	\]
	where $\alpha_i,\beta_i \in \sO_K$ and are all distinct.

	Suppose that $(u:v) \in \PP^1$ is such that $\varphi(u,v) \in C_\eta(\ZZ)$, i.e. $\varphi_x(u,v)/\varphi_z(u,v)$ and $\varphi_y(u,v)/\varphi_z(u,v)$ lie in $\ZZ$. We may assume that $u$ and $v$ are integral and coprime. Note that $3$ is totally ramified in $K$, so there is a unique prime of $K$ lying over $3$ which is generated by some $\nu \in \sO_K$. Let $S$ be the set containing only this prime. Next we will show that $u - \beta_jv$ is an $S$-unit. Note that
	\[
		\gcd\left(u - \alpha_iv,u - \beta_jv\right)
		\mid (\alpha_i - \beta_j)\gcd\left(u,v\right)
		= \alpha_i - \beta_j.
	\]
	A short computation shows that $\norm_{K/\QQ}(\alpha_i - \beta_j) \in \{1,9\}$, in particular, they are $S$-units. By hypothesis,
	\[
		\frac{-3\prod u - \alpha_iv}{\prod u - \beta_jv} = \frac{\varphi_y(u,v)}{\varphi_z(u,v)} \in \ZZ.
	\]
	By the above, the denominator is relatively prime to the numerator at all primes outside of $S$. Because the quotient is integral, this implies that the $u - \beta_jv$ are $S$-units.

	Next we claim that $\ord_{\nu}(u - \beta_j v) \leq 12$ for every $j$. Note that since $\varphi_y(u,v)/\varphi_z(u,v) \in \ZZ$, it follows that
	\[
		\sum_j \ord_{\nu}\left(u - \beta_j v\right)
		\leq
		6 + \sum_i \ord_{\nu}\left(u - \alpha_i v\right).
	\]
	Moreover, above we saw that for any fixed $i,j$, we have that $\gcd(u - \alpha_i v,u - \beta_j v) \mid \alpha_i - \beta_j$. A straightforward computation
	shows that $\max_{i,j}\{ \ord_{\nu}\left(\alpha_i - \beta_j\right) \} = 2$, so
	\[
		\min\left\{
			\ord_{\nu}\left(u - \alpha_i v\right),
			\ord_{\nu}\left(u - \beta_j v\right)
		\right\}
		\leq 2.
	\]
	So if $\ord_{\nu}(u - \beta_j v) > 12$, then by the first inequality, there is some $i$ such that $\ord_{\nu}(u - \alpha_i v) > 2$, but this contradicts the second inequality.

	Now we will show how finding integral points on $C_\eta$ reduces to solving a unit equation. Let $A = (\beta_2 - \beta_3)/(\beta_2 - \beta_1)$ and $B = (\beta_3 - \beta_1)/(\beta_2 - \beta_1)$. One can show
	that $A,B$ are units in $\sO_K^\times$. Then
	\[
		A \frac{u - \beta_1v}{u - \beta_3v} + B \frac{u - \beta_2v}{u - \beta_3v} = 1.
	\]
	This is sometimes called Siegel's identity. By the above, each summand in this equation is an $S$-unit whose valuation at $\nu$ is bounded between $-12$ and $12$.

	Therefore, we are looking for solutions $X,Y \in \sO_{K,S}^\times$ to the $S$-unit equation
	\[
		X + Y = 1.
	\]
	Given such a solution, we can solve for $u,v$ using the equations
	\[
		X = A \frac{u - \beta_1v}{u - \beta_3v}, \quad
		Y = B \frac{u - \beta_2v}{u - \beta_3v}.
	\]
	At this moment, we do not know of a widely available and refereed implementation of an $S$-unit equation solver over number fields for arbitrary sets $S$ of primes\footnote{One is currently being written for {\tt Sage}, see \url{https://trac.sagemath.org/ticket/22148}.}. However, because of the bounds on $\ord_{\nu}(u - \beta_j v)$, it suffices to find solutions $X,Y \in \sO_K^\times$ to the unit equation
	\[
		\nu^iX + \nu^jY = 1
	\]
	for all pairs $i,j$ with $-12 \leq i,j \leq 12$. Using {\tt Magma} \cite{magma},
	we found the $10$ integral points on $C_\eta$ listed in the statement.
\end{proof}

\begin{proof}[Outline of the Proof of Proposition~\ref{prop:weil-gens-in-zeta9}]
	The proof is computational, so we only outline the steps. For each $\eta' \in T$, we computed the transformation between $C_{\eta}$ and $C_{\eta'}$ given in the proof of Lemma~\ref{lem:eta-curves-are-isomorphic-g=3}. By applying these transformations to the points given in Lemma~\ref{lem:points-on-C-eta-in-zeta9}, we obtained $C_{\eta'}(\ZZ)$ for all $\eta'$. Finally, we used the method outlined in the proof of Proposition~\ref{prop:W-size-g>=3} to find all Weil generators associated to the integral points of $C_{\eta'}$.
\end{proof}

\subsection{An Example of Genus $1$}

Suppose that $C_\eta$ has genus $1$. By construction, the projective closure $\overline{C}_\eta$ of $C_\eta$ contains the $F$-rational point $(-\eta:1:0)$. Therefore $\overline{C}_\eta$ is isomorphic (over $F$) to an elliptic curve. If the rank of this curve is $0$, then we can provably find all Weil generators in $K$ by finding the torsion points on the elliptic curve.

\begin{example}\label{ex:no-q-points-on-C_eta}


	Let $K = \QQ(\beta)$ where $\beta$ is a root of $x^6 - x^5 + 3x^4 + 5x^2 - 2x + 1$. Let $\eta$ be a root of $x^3 + x^2 - 2x - 1$ in the maximal totally real subfield $F$ of $K$. Then $C_\eta$ is given by the polynomial
	\[
		f_\eta = x^3 + x^2y - 2xy^2 - y^3 - 5x^2 - xy + y^2 + 6x + 2y - 28.
	\]
	Over $F$, this curve is isomorphic to the elliptic curve $E$ given by the Weierstrass equation
	\[
		y^2 + xy + y = x^3 + 611x + 6416.
	\]
	We used {\tt Sage} to compute that $E/F$ has rank $0$. The torsion group $E(F)$ consists of the points $\{(0 : 1 : 0), (4 : 92 : 1), (4 : -97 : 1)\}$.
	In this case, $F$ is Galois and the images of $E(F)$ in $\overline{C}_\eta$ are the points at infinity, i.e. $(-\sigma(\eta) : 1 : 0)$ for each $\sigma \in \gal(F/\QQ)$. Therefore $C_\eta(\ZZ) = \emptyset$. By Lemma~\ref{lem:eta-curves-are-isomorphic-g=3}, the same holds for any $\eta' \in F$ such that $\sO_F = \ZZ[\eta']$, hence $K$ has no Weil generators.
\end{example}

\subsection{General Bounds for the Genus $1$ Case}

While there are general methods for finding integral points on genus $1$ curves \cite{stroeker2003computing}, they usually require starting with a rational point or a basis for the Mordell-Weil group over $F$. This may be too difficult to compute. However, we can give an upper bound on the height of any Weil generator for $K$ using a result of Baker and Coates \cite{baker1970integer}. This in turn can be used to bound $\#W$. In this section, we will compute this bound explicitly.

Recall from the proof of Proposition~\ref{prop:W-size-g>=3} that any Weil generator $\alpha$ corresponds to a point on $C_\eta$ as follows. We write $\alpha = \Omega + a/2$ where $\Omega = (u(\gamma - \overline{\gamma}) + \eta)/2$ and $\eta \in T$, $u \in \sO_F^\times$, and $a \in \ZZ$. Then $4\Omega\overline{\Omega} = (\eta^2 + u^2\delta) = A + B\eta$ and $P = (A,B)$ is an integral point on $C_\eta$.

Because $\alpha\overline{\alpha} = \Omega\overline{\Omega} + a\eta/2 + a^2/4 \in \ZZ$, we know that $a = -B/2$. So
\[
	h(\alpha)^2
	= \alpha\overline{\alpha}
	= \frac{A}{4} + \frac{B^2}{16}
	\leq h(P)^2,
\]
where $h(P) = \max(|A|,|B|)$.

Let $H_\eta$ denote the maximum absolute value of the coefficients of the defining polynomial of $C_\eta$ as given in equation~(\ref{eq:C_eta-curve}). By a theorem of Baker and Coates \cite{baker1970integer}, if $Q \in C_\eta(\ZZ)$, then
\[
	h(Q) \leq \exp\exp\exp\left(2H_\eta\right)^{10^{3^{10}}}.
\]
Let $\script{H} = \max_{\eta \in T} H_\eta$. Then for any Weil generator $\alpha \in W$,
\begin{equation}\label{eq:bound-on-alpha-height-in-genus-1-case}
	h(\alpha) \leq \exp\exp\exp\left(2\script{H}\right)^{10^{3^{10}}}.
\end{equation}

\begin{remark}
	One can get a better, but still impractical, upper bound on the height of integral points on genus $1$ curves using the main result in \cite{schmidt1992integer}.
\end{remark}

We can use equation~(\ref{eq:bound-on-alpha-height-in-genus-1-case}) to bound the number $\#W$ of Weil generators in $K$. Because the bound in equation~(\ref{eq:bound-on-alpha-height-in-genus-1-case}) is already impractical, we will not try for an optimal bound. Let $\kappa$ be the number of roots of unity in $K$. Then there are at most $\kappa$ Weil generators which generate the same ideal. This is because if two Weil numbers generate the same ideal, then they differ by a root of unity. Moreover, $\norm_{K/\QQ}(\alpha) = h(\alpha)^{\deg K}$. So the bound on $h(\alpha)$ gives a bound on $\norm_{K/\QQ}(\alpha\sO_K)$. It remains to count the number of ideals in $\sO_K$ of bounded norm. Let $\zeta_K$ denote the Dedekind zeta-function of $K$. If $a_n$ is the number of ideals of $\sO_K$ of norm $n$, then
\[
	\sum_{n \leq M} a_n
	\leq M^2\sum_{n \leq M} \frac{a_n}{n^2}
	\leq M^2\zeta_K(2).
\]
Therefore
\[
	\#W
	\leq
	\kappa\zeta_K(2)\left(\exp\exp\exp\left(2\script{H}\right)^{10^{3^{10}}}\right)^{2\deg K}.
\]

\subsection{Computational Results}

%
%
%
%
We implemented Algorithm~\ref{alg:find-siw-g>=2} in {\tt Sage} to search for Weil generators. We only considered sextic CM fields with class number $1$ because these are the fields required to find super-isolated abelian threefolds (see Section~\ref{sec:super-isolated-variaties}). There are $403$ such fields (see Table~\ref{tbl:results-on-cm-fields-class-number-1}). Many of these fields do not contain any Weil generators. For example, $89$ of these fields do not have a monogenic maximal totally real subfield, see Lemma~\ref{lem:equiv-defn-for-siw}. Our search found Weil generators in $77$ fields. The largest value of $\alpha\overline{\alpha}$ was $83201$. Of the total $644$ Weil generators found, $472$ had the property that $\alpha\overline{\alpha}$ was a prime-power. The largest prime-power value of $\alpha\overline{\alpha}$ was $38461$, which is prime.

\section{Super-Isolated Varieties}\label{sec:super-isolated-variaties}

In this section we are interested in abelian varieties with the following property.

\begin{definition}\label{def:super-isolated-variety}
	Let $q$ be a prime power. We say that an abelian variety $A/\FF_q$ is \emph{super-isolated} if its $\FF_q$-rational isogeny class contains no other $\FF_q$-isomorphism classes.
\end{definition}

The goal of this section is to give examples of super-isolated abelian varieties, as well as to explain their relationship to Weil generators (see Definition~\ref{defn:weil-generator}).

\begin{example}\label{ex:super-isolated-elliptic-curves-over-F2}
	There are $5$ isomorphism classes of elliptic curves over $\FF_2$, and they are given in Table~\ref{tbl:all-curves-over-F2}. Recall that two elliptic curves over a finite field are isogeneous if and only if they share the same number of points. Because each curve in Table~\ref{tbl:all-curves-over-F2} has a different number of points, they lie in distinct isogeny classes. Hence they are all super-isolated.
	\begin{table}
		\centering
		\begin{tabular}{c|c}
			$E$ & $\#E(\FF_2)$ \\\hline
			$y^2 + y = x^3$ &  3 \\
			$y^2 + y = x^3 + x$ &  5 \\
			$y^2 + y = x^3 + x + 1$ & 1 \\
			$y^2 + xy = x^3 + 1$ &  4 \\
			$y^2 + xy + y = x^3 + 1$ & 2
		\end{tabular}
		\caption{Isomorphism classes of elliptic curves over $\FF_2$.}\label{tbl:all-curves-over-F2}
	\end{table}
\end{example}

First we will explain the connection between super-isolated abelian varieties and Weil generators. A theorem of Honda and Tate says that there is a bijection between conjugacy classes of Weil $q$-numbers and isogeny classes of simple abelian varieties over $\FF_q$, see \cite[Sec.~I.6]{waterhouse1971abelian} for references. This bijection works as follows. Let $A/\FF_q$ be an abelian variety that is simple over $\FF_q$, and let $f(x) \in \ZZ[x]$ be the characteristic polynomial of the Frobenius endomorphism of $A$, which has degree $2g$. The Honda-Tate bijection sends the isogeny class of $A$ to the roots of $f$. Let $\pi$ be any root of $f$.

Recall that $A$ is ordinary if $\pi$ is totally imaginary and $(\pi + q/\pi,q) = 1$ \cite[Ch.~7]{waterhouse1969abelian}. In this case, $K = \QQ(\pi)$ is a CM field of degree $2g$ (see \cite[Thm.~7.2]{waterhouse1969abelian} or \cite[Thm.~2]{tate1966endomorphisms}). Theorem~\ref{thm:super-isolated-ordinary-characterization} below shows that if $A$ is ordinary then $A$ is super-isolated if and only if $\pi$ is a Weil generator for $K$ and $K$ has class number $1$. Example~\ref{ex:supersingular-super-isolated-not-weil-generator} below shows that without the ordinary hypothesis, it is possible for $A$ to be super-isolated and $\pi$ to not be a Weil generator.

\begin{remark}
	If $A$ is not ordinary, then $f(x)$ may not be irreducible. For example, by \cite[Thm.~2.9]{maisner2002abelian}, $f(x) = (x^2 - 5)^2$ is the characteristic polynomial of the Frobenius endomorphism of a simple abelian surface over $\FF_5$.
\end{remark}

\begin{theorem}\label{thm:super-isolated-ordinary-characterization}
	Suppose that $A$ is a simple ordinary abelian variety over $\FF_q$. Let $\pi$ denote a root of the characteristic polynomial of the Frobenius endomorphism of $A$ and $K = \QQ(\pi)$. Then $A$ is super-isolated if and only if $\pi$ is a Weil generator for $K$ and $K$ has class number $1$.
\end{theorem}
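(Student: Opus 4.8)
The plan is to invoke the Deligne--Waterhouse description of isomorphism classes inside an ordinary isogeny class and then reduce the statement to a clean fact about orders in $K$. Since $A$ is simple and ordinary, $\pi$ is a Weil $q$-number: it is an algebraic integer, $K = \QQ(\pi)$ is a CM field of degree $2g$, and complex conjugation sends $\pi \mapsto \overline{\pi} = q/\pi$, so that $\pi\overline{\pi} = q \in \ZZ$. In particular $\pi$ is already a Weil number in the sense of Definition~\ref{defn:weil-number}, and $\pi$ is a Weil generator precisely when the order $R := \ZZ[\pi,\overline{\pi}]$ equals $\sO_K$. By Deligne's equivalence of categories for ordinary abelian varieties over $\FF_q$ (see \cite{waterhouse1969abelian}), under which Frobenius and Verschiebung correspond to $\pi$ and $\overline{\pi}$, the $\FF_q$-isomorphism classes of abelian varieties in the isogeny class of $A$ are in bijection with the isomorphism classes of fractional $R$-ideals in $K$, where fractional ideals $I,J$ are $R$-module isomorphic exactly when $J = \lambda I$ for some $\lambda \in K^\times$. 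Hence $A$ is super-isolated if and only if there is a \emph{unique} isomorphism class of fractional $R$-ideals.

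For the reverse implication I would argue as follows. If $\pi$ is a Weil generator and $h_K = 1$, then $R = \sO_K$, so the fractional $R$-ideals are the fractional $\sO_K$-ideals, whose isomorphism classes form the ideal class group $\mathrm{Cl}(K)$. Since $h_K = 1$ this group is trivial, so there is a single isomorphism class and $A$ is super-isolated.

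For the forward implication, suppose $A$ is super-isolated, so there is a unique isomorphism class of fractional $R$-ideals. Both $R$ and $\sO_K$ are full-rank fractional $R$-ideals, so they must lie in the same class, i.e. $\sO_K = \lambda R$ for some $\lambda \in K^\times$. The multiplier ring $\mathfrak{O}(I) = \{x \in K \st xI \subseteq I\}$ is invariant under homothety, and one checks directly that $\mathfrak{O}(R) = R$ (because $1 \in R$) while $\mathfrak{O}(\sO_K) = \sO_K$. Therefore $R = \mathfrak{O}(R) = \mathfrak{O}(\lambda R) = \mathfrak{O}(\sO_K) = \sO_K$, which says $\ZZ[\pi,\overline{\pi}] = \sO_K$, i.e. $\pi$ is a Weil generator. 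With $R = \sO_K$ in hand, the uniqueness hypothesis becomes triviality of $\mathrm{Cl}(K)$, so $h_K = 1$.

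The main obstacle is not the commutative-algebra comparison, which is short once phrased through multiplier rings, but rather pinning down the Deligne--Waterhouse parametrization with the correct bookkeeping: one must verify under the ordinary hypothesis that every full $R$-lattice in $K$ genuinely arises from an abelian variety in the isogeny class and conversely, and that $R$-module isomorphism of lattices corresponds exactly to $\FF_q$-isomorphism of varieties, so that counting isomorphism classes really is counting fractional $\ZZ[\pi,\overline{\pi}]$-ideals up to homothety. Once that dictionary is established, the equivalence with ``$R = \sO_K$ and $h_K = 1$'' follows formally.
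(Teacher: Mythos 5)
Your proof is correct, but it routes through different machinery than the paper. The paper stays entirely within Waterhouse's two theorems: Thm.~7.4 of \cite{waterhouse1969abelian} says the endomorphism rings occurring in the ordinary isogeny class are exactly the orders containing $\ZZ[\pi,\overline{\pi}]$, so uniqueness of the endomorphism ring forces $\ZZ[\pi,\overline{\pi}] = \sO_K$ (otherwise both $\ZZ[\pi,\overline{\pi}]$ and $\sO_K$ occur, giving at least two isomorphism classes), and Thm.~7.2 then makes the varieties with endomorphism ring $\sO_K$ a torsor under the class group, so their number is $h_K$. You instead invoke the full lattice-theoretic parametrization of the isogeny class --- isomorphism classes in bijection with homothety classes of full $\ZZ[\pi,\overline{\pi}]$-lattices in $K$ --- and then run a clean commutative-algebra argument: since multiplier rings are homothety invariants and $\mathfrak{O}(R) = R$, $\mathfrak{O}(\sO_K) = \sO_K$, a single homothety class forces $R = \sO_K$, after which uniqueness is triviality of $\mathrm{Cl}(K)$. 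Your approach buys a single uniform dictionary and a slicker forward implication (you never separately track which endomorphism rings occur), at the cost of a strictly stronger input: the lattices-up-to-homothety parametrization is Deligne's equivalence for ordinary abelian varieties (Deligne, \emph{Vari\'et\'es ab\'eliennes ordinaires sur un corps fini}, 1969), not something stated in \cite{waterhouse1969abelian}, so your citation should be corrected; Waterhouse's Thms.~7.2 and 7.4, which the paper uses, are weaker statements that suffice without establishing that \emph{every} full $R$-lattice arises from a variety. You correctly flag that this surjectivity-and-faithfulness bookkeeping is the real burden of your route --- it is exactly the content of Deligne's theorem, and once cited properly your argument is complete.
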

\begin{proof}
	By \cite[Thm.~7.4]{waterhouse1969abelian}, the set of endomorphism rings that appear in the isogeny class of $A$ are precisely the orders in $K$ containing $\ZZ[\pi,\overline{\pi}]$. So there is one endomorphism ring if and only if $\ZZ[\pi,\overline{\pi}] = \sO_K$. The result then follows from \cite[Thm.~7.2]{waterhouse1969abelian}, which says that the isomorphism classes of abelian varieties in the isogeny class of $A$ whose endomorphism ring is isomorphic to $\sO_K$ form a principal homogeneous space for the class group of $K$. In particular, there is one isomorphism class with endomorphism ring $\sO_K$ if and only if $K$ has class number $1$.
\end{proof}

\begin{example}\label{ex:super-isolated-abelian-surface-F11}
	Let $C$ be the hyperelliptic curve given by $y^2 = x^5 + 4$ over the field $\FF_{11}$. The zeta-function $Z(C,t)$ of $C$ is given by
	\[
		Z(C,t) = \frac{121t^4 + 121t^3 + 51t^2 + 11t + 1}{11t^2 - 12t + 1}.
	\]
	Recall that the reverse of the numerator of $Z(C,t)$ is the characteristic polynomial $f(x)$ of the Frobenius endomorphism of the Jacobian $J$ of $C$ \cite[Ch.~5.2]{cohen2006handbook}. In this case
	\[
		f(x) = x^4 + 11x^3 + 51x^2 + 121x + 121.
	\]
	Because $f$ is irreducible, $J$ is a simple abelian surface. Let $\pi$ be a root of $f(x)$. A straightforward calculation shows that $\pi$ is a Weil generator for $K = \QQ(\pi)$, which has class number $1$ (it is isomorphic to $\QQ(\zeta_5)$). Moreover, $\pi+11/\pi$ is coprime to $11$. Therefore $J$ is super-isolated by Theorem~\ref{thm:super-isolated-ordinary-characterization}.
\end{example}

The following is a straightforward corollary of Theorem~\ref{thm:super-isolated-ordinary-characterization} and Theorem~\ref{thm:W-size}.

\begin{corollary}\label{cor:finite-ord-av-super-isol}
	Let $g \geq 3$. There are finitely many super-isolated simple ordinary abelian varieties of dimension $g$.
\end{corollary}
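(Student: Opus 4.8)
The plan is to turn the geometric finiteness statement into a purely arithmetic one using Theorem~\ref{thm:super-isolated-ordinary-characterization}, and then to express the resulting count as a union of finitely many finite sets, the two finiteness inputs being (i) the per-field bound of Theorem~\ref{thm:W-size} and (ii) the finiteness of the relevant fields.

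First I would set up the correspondence. By Theorem~\ref{thm:super-isolated-ordinary-characterization}, a simple ordinary abelian variety $A/\FF_q$ of dimension $g$ with Frobenius root $\pi$ is super-isolated precisely when $\pi$ is a Weil generator for the CM field $K = \QQ(\pi)$ of degree $2g$ and $h_K = 1$. Since isogeny classes of simple abelian varieties correspond under Honda--Tate to Galois-conjugacy classes of Weil $q$-numbers, and a super-isolated variety is the unique member of its isogeny class, the assignment $A \mapsto \pi$ injects the set of isomorphism classes we wish to count into
\[
	\bigcup_{\substack{K\text{ CM},\ [K:\QQ]=2g\\ h_K = 1}} \left\{ \text{conjugacy classes of Weil generators }\pi\in K \right\}.
\]
The extra requirements that $\pi\overline{\pi}$ be a prime power and that $(\pi+q/\pi,q)=1$ (ordinarity) only shrink this set, so it is enough to bound the displayed union.

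For each fixed field $K$ in the union, the case $g \geq 3$ of Theorem~\ref{thm:W-size} (Proposition~\ref{prop:W-size-g>=3}) says that the set $W$ of Weil generators of $K$ is finite; since each conjugacy class contains at most $2g$ elements, there are finitely many classes. Thus every term of the union is finite, and it remains to show that only finitely many fields $K$ occur. The binding constraint here is $h_K = 1$, not the existence of a Weil generator. Writing $h_K = h_F\cdot h_K^-$ with $F$ the maximal totally real subfield and $h_K^-$ the relative class number, one uses that $K$ and $F$ have unit groups of the same rank $g-1$ and that $[\sO_K^\times:\mu_K\sO_F^\times]\in\{1,2\}$, so the regulators satisfy $R_K \asymp R_F$ with a ratio depending only on $g$. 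The Brauer--Siegel theorem, applied to $K$ and to $F$ (both of bounded degree), then forces $h_K^- \to \infty$ as $|\disc_K|\to\infty$, because $|\disc_F| \leq |\disc_K|^{1/2}$. Hence $h_K = 1$ bounds $|\disc_K|$, and by Hermite--Minkowski only finitely many CM fields of degree $2g$ have class number $1$. The union is therefore a finite union of finite sets, which proves the corollary.

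The main obstacle is exactly step (ii): the finiteness of the field set is the one ingredient lying outside the two cited theorems, and via Brauer--Siegel it is ineffective in general. I would emphasize, however, that for $g = 3$ this difficulty evaporates in practice---the sextic CM fields of class number $1$ are explicitly known (there are $403$), and Proposition~\ref{prop:W-size-g>=3} is effective for $g=3$---so the threefold case is in principle completely enumerable.
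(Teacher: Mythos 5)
Your proof is correct and takes essentially the same approach as the paper: Theorem~\ref{thm:super-isolated-ordinary-characterization} together with the injectivity of the Honda--Tate correspondence reduces the count to conjugacy classes of Weil generators in class-number-one CM fields of degree $2g$, with per-field finiteness supplied by the $g \geq 3$ case of Theorem~\ref{thm:W-size}. The only divergence is that where the paper simply cites Stark \cite{stark1974effective} for the finiteness of CM fields of degree $2g$ with $h_K = 1$, you sketch the Brauer--Siegel argument yourself; that sketch is essentially Stark's proof, but be aware that the unconditional Brauer--Siegel lower bound requires the field to be normal over $\QQ$ (most CM fields of degree $2g$ are not, e.g.\ $386$ of the $403$ sextic ones), so one should instead run the relative argument through the Siegel-type lower bound on $L(1,\chi)$ for the quadratic character of $K/F$ --- which is exactly what the cited reference does, so deferring to it is the cleaner move.
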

\begin{proof}
	Let $A$ be a super-isolated simple ordinary abelian variety over a finite field of dimension $g \geq 3$, and let $\pi$ be a root of the characteristic polynomial of the Frobenius endomorphism of $A$. By Theorem~\ref{thm:super-isolated-ordinary-characterization}, $\pi$ is a Weil generator for $K = \QQ(\pi)$, which is a CM field of degree $2g$ and class number $1$. The Honda-Tate theorem says that the map sending the isogeny class of $A$ (which, because $A$ is super-isolated, is equivalent to the isomorphism class of $A$) to the conjugacy class of $\pi$ is injective. Therefore, it is sufficient to count the number of Weil generators in CM fields of degree $2g$ with class number $1$. In \cite{stark1974effective}, Stark proves that the number of such fields is finite, and Theorem~\ref{thm:W-size} says that the number of Weil generators in each such field is finite.
\end{proof}

Corollary~\ref{cor:finite-ord-av-super-isol} suggests that in order to find super-isolated varieties, we should first find CM fields with class number $1$. This is an important problem in number theory. For small values of $g$, all CM fields of degree $2g$ with class number $1$ are known. Table~\ref{tbl:results-on-cm-fields-class-number-1} summarizes the results for $g \leq 3$ and gives references. Many of these fields can also be found in the L-functions and Modular Forms Database \cite{lmfdb}.

\begin{table}
	\centering
	\begin{tabular}{c|c|c}
		Degree & \# of CM fields with class number 1 & Reference\\\hline
		2 & 9 & \cite{gauss1870disquisitiones,stark1967complete,baker1966linear} \\\hline
		4 & \begin{tabular}{@{}c@{}}
			Galois: 54 \\
			Non-Galois: 37
		\end{tabular} & \begin{tabular}{@{}c@{}}
			\cite{setzer1980determination} \\ \cite{louboutin1994determination}
		\end{tabular} \\\hline
		6 & \begin{tabular}{@{}c@{}}
			Galois: 17 \\
			Non-Galois: 386
		\end{tabular} & \begin{tabular}{@{}c@{}}
			\cite{yamamura1994determination,louboutin1992minoration}\\
			\cite{boutteaux2002class,boutteaux2002class2}
		\end{tabular}
	\end{tabular}
	\caption{Summary of the class number 1 problem for CM fields of degree $\leq 6$.}
	\label{tbl:results-on-cm-fields-class-number-1}
\end{table}

\begin{remark}\label{rem:finite-cm-fields-class-number-1}
	It is believed that there is a finite number of CM fields with class number $1$ \cite{stark1974effective}. If this is true, then we do not need to fix $g$ in Corollary~\ref{cor:finite-ord-av-super-isol}.
\end{remark}

\begin{remark}
	If a CM field $K$ has class number $1$, then it avoids some of the obstructions to containing Weil generators mentioned in Section~\ref{sec:weil-gens}. Recall that a necessary condition for $K$ to contain a Weil generator is that $\sO_K$ is a free $\sO_F$-module. This condition was used in Example~\ref{ex:no-relative-generator} to show that $\QQ(\sqrt{60},\sqrt{-2})$ has no Weil generators. However, this condition is always satisfied when $K$ has class number $1$ because in that case $F$ also has class number $1$ \cite[Prop.~4.11]{washington1997introduction}. Hence $\sO_F$ is a PID, so by the structure theorem for modules over a PID, $\sO_K$ is a free $\sO_F$-module.
\end{remark}

\subsection{Examples of Super-Isolated Elliptic Curves}

In this section we will give examples of super-isolated elliptic curves, as well as extend Theorem~\ref{thm:super-isolated-ordinary-characterization} to include the case of supersingular curves.

In \cite{schoof1987nonsingular}, Schoof gave a formula for the size of the isogeny class of an elliptic curve $E/\FF_q$ in terms of $\#E(\FF_q)$ and $q$. The following is a straightforward consequence of that formula.

\begin{proposition}\label{prop:super-isolated-curve-characterization}
	Let $q = p^a$ be a prime power, $E$ be an elliptic curve over $\FF_q$, and $t = q + 1 - \#E(\FF_q)$. Then $E/\FF_q$ is super-isolated if and only if one of the following holds:
	\begin{enumerate}
		\item\label{ex:superiso-curve-case-1} $t \not\equiv 0 \mod{p}$ and $t^2 - 4q \in \{-3,-4,-7,-8,-11,-19,-43,-67,-163\}$.
		\item\label{ex:superiso-curve-case-2} $p \in \{2,3,5,7,13\}$, and $t^2 = 4q$.
		\item\label{ex:superiso-curve-case-3} $p \in \{2,3\}$, and $t^2 = p^{a+1}$.
		\item\label{ex:superiso-curve-case-4} $p = 2$ and $t = 0$.
		\item\label{ex:superiso-curve-case-5} $p = 3$ and $t^2 = q$.
	\end{enumerate}
\end{proposition}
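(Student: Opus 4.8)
The plan is to read off super-isolatedness from Schoof's class-number formula for the cardinality of an isogeny class. Writing $I(t)$ for the number of $\FF_q$-isomorphism classes of elliptic curves with trace $t$, the curve $E$ is super-isolated exactly when $I(t) = 1$, so the whole statement reduces to solving $I(t) = 1$. Schoof's formula \cite{schoof1987nonsingular} expresses $I(t)$ as a class number whose shape depends on whether the class is ordinary ($p \nmid t$) or supersingular ($p \mid t$), and Waterhouse's theorem \cite{waterhouse1969abelian} tells us exactly which traces $t$ occur. I would therefore split into the ordinary and supersingular regimes and evaluate $I(t) = 1$ in each.

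In the ordinary case let $\pi$ be a root of $x^2 - tx + q$ and $K = \QQ(\pi) = \QQ(\sqrt{t^2 - 4q})$, an imaginary quadratic field since $t^2 < 4q$. By Deuring's theorem (as recorded in \cite{waterhouse1969abelian}), $I(t) = \sum_{\sO} h(\sO)$, the sum ranging over the orders $\ZZ[\pi] \subseteq \sO \subseteq \sO_K$, equivalently over conductors dividing the conductor $f_0$ of $\ZZ[\pi]$ in $\sO_K$. Since every $h(\sO) \ge 1$, this sum equals $1$ if and only if there is a single such order and it has class number one; that is, $f_0 = 1$ (so $t^2 - 4q = \disc_K$ is fundamental and $\ZZ[\pi] = \sO_K$) together with $h_K = 1$. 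The fundamental discriminants of class number one imaginary quadratic fields are precisely $\{-3,-4,-7,-8,-11,-19,-43,-67,-163\}$ \cite{baker1966linear,stark1967complete}, which is exactly case~\ref{ex:superiso-curve-case-1}.

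For the supersingular regime I would run through Waterhouse's admissible traces with $p \mid t$ --- namely $t = 0$, $t^2 = q$, $t^2 = pq = p^{a+1}$, and $t^2 = 4q$ --- and evaluate Schoof's formula for each. When the endomorphism algebra is imaginary quadratic, only the order maximal at $p$ is realized, so $I(t)$ is a single class number up to a local correction. For $t^2 = pq$ the Hasse bound forces $p \le 4$, hence $p \in \{2,3\}$; one gets $K = \QQ(i)$ when $p = 2$ and $K = \QQ(\sqrt{-3})$ when $p = 3$, both of class number one, and since the quadratic twist has trace $-t \neq t$ and so leaves the isogeny class, $I(t) = 1$, giving case~\ref{ex:superiso-curve-case-3}. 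For $t^2 = q$ (which requires $a$ even) Schoof's formula is $1 - \left(\tfrac{-3}{p}\right)$ on the classes that occur, equalling $1$ exactly when $p = 3$ (case~\ref{ex:superiso-curve-case-5}); for $t = 0$ the count is $1 - \left(\tfrac{-4}{p}\right)$ when $a$ is even and a class number attached to $\QQ(\sqrt{-p})$ when $a$ is odd, and in every admissible case it exceeds $1$ unless $p = 2$ (case~\ref{ex:superiso-curve-case-4}). Finally, when $t^2 = 4q$ one has $\pi = \sqrt q \in \ZZ$ and the endomorphism algebra is the quaternion algebra ramified at $p$ and $\infty$; here $I(t)$ is the number of supersingular $j$-invariants in characteristic $p$, which by the Eichler--Deuring mass formula equals $1$ exactly for $p \in \{2,3,5,7,13\}$ (case~\ref{ex:superiso-curve-case-2}).

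The main obstacle is organizational rather than conceptual: one must transcribe Schoof's supersingular class-number formulas correctly, use Waterhouse's description to track which orders actually occur as endomorphism rings in each supersingular class, and account for the doubling produced by quadratic twists (and its collapse at the exceptional invariants $j = 0$ and $j = 1728$). The quaternion case $t^2 = 4q$ is the most delicate, since there $I(t)$ is the type/class number of the quaternion algebra and one must match the mass-formula evaluation precisely to the list $\{2,3,5,7,13\}$.
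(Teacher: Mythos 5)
Your proposal is correct and takes essentially the same route as the paper: the paper's proof handles case~\ref{ex:superiso-curve-case-1} via Theorem~\ref{thm:super-isolated-ordinary-characterization} (whose content for $g = 1$ is exactly your Deuring-style argument that the isogeny class size is $\sum_{\ZZ[\pi] \subseteq \sO \subseteq \sO_K} h(\sO)$, forcing $t^2 - 4q$ to be a fundamental discriminant of class number one) together with the Baker--Heegner--Stark list, and disposes of the supersingular cases~\ref{ex:superiso-curve-case-2}--\ref{ex:superiso-curve-case-5} by citing Schoof's class-number formulas, which is precisely the case-by-case evaluation you outline. Your write-up is in fact more detailed than the paper's (which is a two-line citation), and your supersingular evaluations --- including the quaternionic count being $1$ exactly for $p \in \{2,3,5,7,13\}$ and the $t=0$, $t^2=q$, $t^2=p^{a+1}$ cases --- all check out.
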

\begin{proof}
	Case~\ref{ex:superiso-curve-case-1} corresponds to ordinary elliptic curves and follows from Theorem~\ref{thm:super-isolated-ordinary-characterization}. The set of values for $t^2 - 4q$ are the discriminants of the imaginary quadratic fields with class number $1$, see Table~\ref{tbl:results-on-cm-fields-class-number-1} for references. The rest of the cases follow directly from a special case of \cite[Thm.~4.6]{schoof1987nonsingular}.
\end{proof}

Table~\ref{tbl:examples-super-isolated-curves} gives examples of curves satisfying each case in Proposition~\ref{prop:super-isolated-curve-characterization}.

\begin{table}
	\centering
	\begin{tabular}{c|c|c|c}
		Case & $E$ & $q$ & $t$ \\\hline
		\ref{ex:superiso-curve-case-1} & $y^2 = x^3 + 4$ & $7$ & $5$ \\
		\ref{ex:superiso-curve-case-2} & $y^2 + y = x^3$ & $4$ & $-4$ \\
		\ref{ex:superiso-curve-case-3} & $y^2 + y = x^3 + x + 1$ & $2$ & $2$ \\
		\ref{ex:superiso-curve-case-4} & $y^2 + y = x^3$ & $2$ & $0$ \\
		\ref{ex:superiso-curve-case-5} & $y^2 = x^3 - x + \beta$ & $9$ & $3$
	\end{tabular}
	\caption{Examples of super-isolated elliptic curves. Here $\beta \in \FF_{9}$ satisfies $x^2 - x - 1$.}
	\label{tbl:examples-super-isolated-curves}
\end{table}

Recall that the property of being super-isolated depends on the base field (see Definition~\ref{def:super-isolated-variety}). It is often the case that a variety may be super-isolated over $\FF_q$, but not over an extension of $\FF_q$.

\begin{example}
	Let $E/\FF_7$ be the elliptic curve defined by $y^2 = x^3 + 4$. Note that $\#E(\FF_7) = 3$, so $E$ is super-isolated by Proposition~\ref{prop:super-isolated-curve-characterization}. However, over the extension field $\FF_{49}$, $E$ is isogeneous, but not isomorphic, to the curve given by $y^2 = x^3 + 6\beta x + 4$, where $\beta \in \FF_{49}$ is a root of $x^2 + 6x + 3$. Therefore $E/\FF_7$ is super-isolated but $E/\FF_{49}$ is not.
\end{example}

It is possible for an abelian variety that is not super-isolated over the base field to become super-isolated over an extension field. For elliptic curves, this phenomenon can only occur for supersingular curves. To see why, let $E/\FF_q$ be an ordinary elliptic curve and suppose that $E/\FF_{q^k}$ is super-isolated for some extension field $\FF_{q^k}$. Then any curve $E'/\FF_q$ which is isogeneous to $E/\FF_q$ must become isomorphic to $E$ over $\FF_{q^k}$. This means that $E'/\FF_q$ is a twist of $E/\FF_q$ (see \cite[Ch.~X.5]{silverman2009arithmetic}). One can show that for ordinary curves, non-trivial twists are never isogeneous (here a non-trivial twist is one that is not isomorphic over the base field). For example, if $E'/\FF_q$ is a quadratic twist of $E/\FF_q$, and $t = p + 1 - \#E(\FF_q)$, then $\#E'(\FF_q) = p + 1 + t$. Since $E$ is ordinary, $t \neq 0$ so $E'$ lies in a different isogeny class (see also \cite[Pg.~542]{waterhouse1969abelian}).

\begin{example}
	Let $E/\FF_5$ be the supersingular elliptic curve given by $y^2 = x^3 + 2$. Then $E$ is isogeneous, but not isomorphic, to the curve $E'/\FF_5$ given by $y^2 = x^3 + 1$. However, by applying Proposition~\ref{prop:super-isolated-curve-characterization}, one can check that $E/\FF_{25}$ is super-isolated. In this case, $E/\FF_{5}$ and $E'/\FF_{5}$ become isomorphic over $\FF_{25}$.
\end{example}

Another possibility is that a super-isolated variety could stay super-isolated in every extension.

\begin{example}
	Let $E/\FF_2$ be the curve given by $y^2 + y = x^3$. In this case, one can compute (see \cite[Exercise~5.13]{silverman2009arithmetic}) that
	\[
		t(E/\FF_{2^a}) =
		\begin{cases}
		0 & a \text{ odd} \\
		2(-2)^{a/2} & a \text{ even}.
		\end{cases}
	\]
	By Proposition~\ref{prop:super-isolated-curve-characterization}, this shows that $E/\FF_{2^a}$ is super-isolated for every $a \geq 1$. This example is somewhat exceptional because $E$ is supersingular.
\end{example}

The following example shows that if $E$ is supersingular, then it is possible that $E$ is super-isolated but the Frobenius endomorphism does not correspond to a Weil generator.

\begin{example}\label{ex:supersingular-super-isolated-not-weil-generator}
	Let $E/\FF_9$ be the supersingular elliptic curve in the last row of Table~\ref{tbl:examples-super-isolated-curves}. The characteristic polynomial $f(x)$ of the Frobenius endomorphism of $E$ is $f(x) = x^2 - 3x + 9$. Let $\pi$ be a root of $f(x)$ and $K = \QQ(\pi)$. The discriminant of $f(x)$ is $-27$, so $\ZZ[\pi]$ has index $3$ in $\sO_K$. In particular, $\pi$ is not a Weil generator for $K$. This shows that the ordinary hypothesis in Theorem~\ref{thm:super-isolated-ordinary-characterization} is necessary.
\end{example}

One way to construct super-isolated elliptic curves over large prime fields is to use the \emph{complex-multiplication (CM) method}. A detailed summary of the CM method can be found in \cite[Ch.~18]{cohen2006handbook}. Essentially, using the CM method to generate super-isolated curves works as follows:
\begin{enumerate}
	\item Choose a quadratic imaginary field $K$ with class number $1$.
	\item Find an elliptic curve $E/\CC$ whose endomorphism ring is isomorphic to $\sO_K$.
	\item Choose a Weil generator $\pi$ (with non-zero trace) for $K$ such that $p = \pi\overline{\pi}$ is prime and is $\geq 5$. This will ensure that the resulting curve is ordinary.
	\item Find a twist of the reduction $E/\FF_p$ whose Frobenius endomorphism corresponds to $\pi$.
\end{enumerate}

If also the only roots of unity in $K$ are $\pm 1$, then we can always use $E$ as opposed to one of its twists. To see why, recall that in this case there is only a single twist of $E$: the quadratic twist. If $E$ has $p + 1 - \tr_{K/\QQ}(\pi)$ points, then the quadratic twist of $E$ has $p + 1 + \tr_{K/\QQ}(\pi)$. It follows from Proposition~\ref{prop:super-isolated-curve-characterization} that if one of them is isolated, then they both are.

\begin{example}\label{ex:cm-method-for-curves}
	Let $K = \QQ(\sqrt{-2})$. The endomorphism ring (over $\CC$) of the elliptic curve $E/\QQ$ given by $y^2 = x^3 - x^2 - 3x - 1$ is isomorphic to $\sO_K$. Recall that every Weil generator for $K$ is of the form $\pi = b \pm \sqrt{-2}$ for some integer $b$. Thus searching for primes $p$ such that $E/\FF_p$ is super-isolated reduces to finding values of $b$ such that $p = b^2 + 2$ is prime.
\end{example}

The CM method described above can also be used to generate curves with useful properties, such as prime order and a base field with low hamming weight.
\begin{example}
	Let $\pi =  2^{127} + 2^{25} + 2^{12} + 2^6 + (1 - \sqrt{-3})/2$. Then $p = \pi\overline{\pi}$ is a $255$ bit prime with Hamming weight $14$. Using the CM method, we found a super-isolated curve $E/\FF_p$ given by $y^2 = x^3 + 19$ that has $\#E(\FF_p) = (\pi - 1)(\overline{\pi} - 1)$ points, which is also a $255$ bit prime.
\end{example}

\subsection{Examples in Higher Dimensions}

In this section we will give examples of super-isolated abelian varieties in dimension $g \geq 2$. One way to construct these varieties is to fix a prime $p$ and randomly choose curves over $\FF_p$ until the Jacobian is super-isolated. We can check if the Jacobian $J$ of a curve $C/\FF_p$ is super-isolated using the zeta-function of $C$ as in Example~\ref{ex:super-isolated-abelian-surface-F11}. Some examples of curves found this way are given in Table~\ref{tbl:examples-of-super-isolated-jacobians}. Because super-isolated abelian varieties are rare, this method of search is impractical when $p$ is large.

\begin{table}
	\centering
	\begin{tabular}{c|c|c}
		Curve & Genus & Field \\\hline
		$y^2 = x^3 + 5x^2 + x + 1$ & $1$ & $\FF_7$ \\
		$y^2 = x^5 + 2x^4 + 4x^3 + x^2 + x + 4$ & $2$ & $\FF_5$ \\
		$y^2 = x^7 + x^5 + x + 2$ & $3$ & $\FF_3$ \\
		$y^2 + (x^5 + x^3 + 1)y = x^9 + x^6$ & $4$ & $\FF_2$
	\end{tabular}
	\caption{Examples of hyperelliptic curves with super-isolated Jacobians.}
	\label{tbl:examples-of-super-isolated-jacobians}
\end{table}

Another way to construct super-isolated abelian varieties is described in Example~\ref{ex:cm-method-for-surfaces}. This method is based on a generalization of the CM method to higher dimensions, see \cite[Ch.~18]{cohen2006handbook}.

\begin{example}\label{ex:cm-method-for-surfaces}
	Let $\pi \in K = \QQ(\zeta_5)$ be a totally imaginary Weil $p$-number such that $p$ splits in $K$. Let $C/\QQ$ be the curve defined by $y^2 = x^5 - 1$. The endomorphism ring over $\CC$ of the Jacobian $J$ of $C$ is isomorphic to $\sO_K$. This means that there is a twist $C'/\FF_p$ of $C/\FF_p$ such that the Frobenius endomorphism of the Jacobian of $C'$ satisfies the minimal polynomial of $\pi$. If $\pi$ is also a Weil generator for $K$, then the resulting surface will be super-isolated. For example, $\pi = 45\zeta_5^3 - 10\zeta_5^2 + 34\zeta_5 - 2320$ is a Weil generator for $K$ with $p = \pi\overline{\pi} = 5465351$ prime. In this case, the Jacobian of the curve $y^2 = x^5 - 4$ over $\FF_p$ is super-isolated.
\end{example}

The CM method is difficult for genus $g > 5$ or fields $K$ of large discriminant or degree. The main difficulty is writing down an appropriate global variety $A/\CC$. See \cite{balakrishnan2016constructing} for a construction in dimension $3$. Moreover, for $g \geq 3$, Theorem~\ref{thm:W-size} suggests that there are few super-isolated abelian varieties of dimension $g$.

It is sometimes possible to use properties of the field $K$ to construct slightly larger examples than we could find by randomly searching through curves.

\begin{example}\label{ex:larger-genus-3-example}
	Suppose that $K$ is a sextic CM field with class number $1$ that contains $\sqrt{-1}$, and suppose that $C/\FF_p$ is a curve of genus $3$ whose Jacobian has an endomorphism ring isomorphic to $\sO_K$. Then it follows from \cite[Cor.~18.17]{cohen2006handbook} that $C$ is isomorphic to a curve of the form $y^2 = x^7 + x^5 + ax^3 + bx$ for some $a,b \in \FF_p$. This means that we can search for a super-isolated abelian threefold by first finding a Weil generator $\pi$ for $K$ such that $p = \pi\overline{\pi}$ is prime. Then we can range over all pairs $(a,b) \in (\FF_p)^2$ and check if the Jacobian of the resulting curve is super-isolated. We used this method to find the curve $y^2 = x^7 + x^5 + 160x^3 + 79x$ over the field $\FF_{353}$. The characteristic polynomial $f(x)$ of the Frobenius endomorphism of the Jacobian $J$ of this curve is
	\[
		x^6 - 88x^5 + 3440x^4 - 80400x^3 + 1214320x^2 - 10965592x + 43986977.
	\]
	The roots of this polynomial are Weil generators for the field generated by the root $\pi$ of $f$ (which is a sextic CM field with class number $1$); hence $J$ is a super-isolated abelian threefold.
\end{example}

\bibliographystyle{abbrv}
\bibliography{references}

\end{document}